\newtheorem{thm}{Theorem}[section]
\newtheorem{cor}[thm]{Corollary}
\newtheorem{lem}[thm]{Lemma}
\theoremstyle{definition}
\newtheorem{defi}[thm]{Definition}
\theoremstyle{remark}
\newtheorem{prob}[thm]{\bf Problem}
\numberwithin{equation}{section}
\numberwithin{figure}{section}
\def\A{\mathcal{A}}
\def \CC{\mathscr{C}}
\def \F{\mathcal{F}}
\def \G{\mathcal{G}}
\def \H{\mathcal{H}}
\def\I{\mathcal{I}}
\def\la{\lambda}
\def \omg{\omega}
\def \R{\mathscr{R}}
\def \T{\mathcal{T}}
\def \Tr{\text{Tr}}
\def \tr{\text{tr}}
\def \U{\mathcal{U}}
\def \V{\mathscr{V}}
\def \W{\mathbf{W}}
\def \WW{\mathscr{W}}
\def\Z{\mathbb{Z}}
\begin{document}
\title[Trace of hypergraph]{The trace of uniform hypergraphs with application to Estrada index}

\author[Y.-Z. Fan]{Yi-Zheng Fan*}
\address{Center for Pure Mathematics, School of Mathematical Sciences, Anhui University, Hefei 230601, P. R. China}
\email{fanyz@ahu.edu.cn}
\thanks{*The corresponding author.
This work was supported by National Natural Science Foundation of China (Grant No. 11871073).}

\author[J. Zheng]{Jian Zheng}
\address{School of Mathematical Sciences, Anhui University, Hefei 230601, P. R. China}
\email{zhengj@stu.ahu.edu.cn}

\author[Y. Yang]{Ya Yang}
\address{School of Mathematical Sciences, Anhui University, Hefei 230601, P. R. China}
\email{yangy@stu.ahu.edu.cn}

\subjclass[2000]{Primary 05C65, 15A69; Secondary 13P15, 14M99}

\keywords{Hypergraph; trace; Estrada index; adjacency tensor; eigenvalue}

\begin{abstract}
In this paper we investigate the traces of the adjacency tensor of hypergraphs (simply called the traces of hypergraphs).
We give new expressions for the traces of hypertrees and linear unicyclic hypergraphs by the weight function assigned to their connected sub-hypergraphs, and provide some perturbation results for the traces of a hypergraph with cut vertices.
As applications we determine the unique hypertree with maximum Estrada index among all hypertrees with fixed number of edges and perfect matchings, and the unique unicyclic hypergraph with maximum Estrada index among all unicyclic hypergraph with fixed number of edges and girth $3$.
\end{abstract}

\maketitle

\section{Introduction}
A  \emph{hypergraph} $\H=(V,E)$ consists of a vertex set $V=\{v_1,v_2,{\cdots},v_n\}$ denoted by $V(\H)$ and an edge set $E=\{e_1,e_2,{\cdots},e_k\}$ denoted by $E(\H)$,
 where $e_i \subseteq V$ for $i \in [k]$.
 If there exist no different $i$ and $j$ such that $e_i \subseteq e_j$, then $\H$ is called \emph{simple}.
 If $|e_i|=m$ for each $i \in [k]$ and $m \geq2$, then $\H$ is called an \emph{$m$-uniform} hypergraph.
 A simple graph is exactly a simple $2$-uniform hypergraph.

 For an $m$-uniform hypergraph $\H$ on vertices $v_1,\ldots,v_n$.
 Cooper and Dutle \cite{CD2012} introduced the adjacency tensor of $\H$ as follows.
\begin{defi}(\cite{CD2012})
Let $\mathcal{H}$ be an $m$-uniform hypergraph on $n$ vertices $v_{1},v_{2},\ldots,v_{n}$.
The {\it adjacency tensor} of $\mathcal{H}$ is defined as $\mathcal{A}(\mathcal{H})=(a_{i_{1}i_{2}\ldots i_{m}})$, an $m$-th order $n$-dimensional tensor, where
$$a_{i_{1} i_{2} \ldots i_{m}}=\left\{
\begin{array}{cl}
\frac{1}{(m-1)!}, & \mbox{~if~} \{v_{i_{1}},\ldots,v_{i_{m}}\} \in E(H);\\
0, & \mbox{~else}.
\end{array}\right.
$$
\end{defi}
 Note that if $m=2$, then $\A(\H)$ is exactly the usual adjacency matrix of the simple graph $\H$.
 In this situation, the $d$-th trace of $\A(\H)$, namely the trace of $\A(\H)^d$, is exactly the number of closed walks of $\H$ with length $d$ starting from each vertex of $\H$.

 To deal with the high order case, Morozov and Shakirov \cite{MS2011} introduced the traces of polynomial maps $f$ given by a system of homogeneous polynomials of arbitrary degrees.
 As a tensor $\mathcal{T}=(t_{i_{1}i_{2}\ldots i_{m}})$ of order $m$ and dimension $n$ naturally induces a system of homogeneous polynomials $\T x^{m-1}$ defined by
 $$ (\T x^{m-1})_i=\sum_{i_{2},\ldots,i_{m}\in [n]}t_{ii_{2}\ldots i_{m}}x_{i_{2}}\cdots x_{i_m}, i \in [n],$$
 where $x=(x_1,\ldots,x_n)^\top$.
 Using the traces defined by Morozov and Shakirov \cite{MS2011}, the $d$-th trace $\Tr_d(\T)$ of $\T$  is expressed as follow:
\begin{equation}\label{MSeq}\Tr_d(\T)=(m-1)^{n-1} \sum_{d_1+\cdots+d_n=d, \atop d_i \in \mathbb{N}, i \in [n]}
\prod_{i=1}^n \frac{1}{(d_i(m-1))!} \left(\sum_{y_i \in [n]^{m-1}}t_{iy_i}\frac{\partial }{\partial a_{iy_i}}\right)^{d_i}
\Tr(A^{d(m-1)}),
\end{equation}
where $A=(a_{ij})$ be an $n \times n$ auxiliary matrix by taking all $a_{ij}$'s as variables, $t_{iy_i}=t_{ii_2 \ldots i_m}$ and $\frac{\partial }{\partial a_{iy_i}}=\frac{\partial }{\partial a_{ii_2}} \cdots \frac{\partial }{\partial a_{ii_m}}$ if
$y_i=(i_2, \ldots, i_m)$.

The traces of a tensor are closely related to its eigenvalues, which were introduced by Lim \cite{Lim} and Qi \cite{Qi} as follows, where $\mathcal{I}=(i_{i_{1}i_{2}\ldots i_{m}})$ is the {\it identity tensor} of order $m$ and dimension $n$, namely, $i_{i_{1}i_{2} \ldots i_{m}}=1$ if
   $i_{1}=i_{2}=\cdots=i_{m} \in [n]$ and $i_{i_{1}i_{2} \ldots i_{m}}=0$ otherwise.
\begin{defi}(\cite{Lim, Qi}\label{21}) Let $\mathcal{T}$ be an $m$-th order $n$-dimensional tensor.
For some $\lambda \in \mathbb{C}$, if the polynomial system $(\lambda \mathcal{I}-\mathcal{T})x^{m-1}=0$, or equivalently $\mathcal{T}x^{m-1}=\lambda x^{[m-1]}$, has a solution $x\in \mathbb{C}^{n}\backslash \{0\}$,
then $\lambda $ is called an {\it eigenvalue} of $\mathcal{T}$ and $x$ is an {\it eigenvector} of $\mathcal{T}$ associated with $\lambda$,
where $x^{[m-1]}:=(x_{1}^{m-1}, x_{2}^{m-1},\ldots,x_{n}^{m-1})$.
\end{defi}

The \emph{determinant} $\det \T$ of $\T$ is defined to be the resultant of the polynomials $\T x^{m-1}$ \cite{Ha},
and the \emph{characteristic polynomial} of $\T$ is defined to be $\varphi_\T(\la):=\det(\la \I-\T)$ \cite{Qi,CPZ2}.
It is known that $\la$ is an eigenvalue of $\T$ if and only if it is a root of $\varphi_\T(\la)$.
The \emph{spectrum} of $\T$ is the multi-set of the roots of $\varphi_\T(\la)$.

Morozov and Shakirov proved that
\begin{equation}\label{MS} \det(\I-\T)=\exp\left(\sum_{d=1}^\infty -\frac{\Tr_d(\T)}{d}\right)=\sum_{d=0}^\infty P_d\left(-\frac{\Tr_1(\T)}{1},\cdots, -\frac{\Tr_d(\T)}{d}\right),\end{equation}
where $P_d$ (the $d$th Schur polynomial) is defined as $P_0=1$, and for $d>0$,
$$P_d(t_1,\ldots,t_d)=\sum_{\ell=1}^d \sum_{d_1+\cdots+d_\ell=d,  d_i \in \mathbb{Z}^+, i \in [\ell]} \frac{t_{d_1}\cdots t_{d_\ell}}{\ell!}.$$
From Eq. (\ref{MS}), we can get the determinant $\det\T$ and the characteristic polynomial $\varphi_\T(\la)=\det(\la \I-\T)$ in terms of traces by considering the degree of the resultant; see \cite{CD2012,SQH2015} for details.

Cooper and Dulte \cite{CD2012} gave explicit formulas for some low co-degree coefficients of the characteristic polynomial of $\A(\H)$ of a uniform hypergraph $\H$.
Shao, Qi and Hu \cite{SQH2015} provided a graph interpretation for the $d$-th trace of a general tensor, and proved that
\begin{equation}\label{traeig} \Tr_d(\T)=\sum_{i=1}^N \la_i^d,\end{equation}
which is consistent with the matrix case,
where $\la_1,\ldots,\la_N$ are all eigenvalues of $\T$, and $N=n(m-1)^{n-1}$.

The \emph{$d$-th trace} of a uniform hypergraph $\H$, denoted by $\Tr_d(\H)$, is defined to be  $\Tr_d(\A(\H))$.
Clark and Cooper \cite{CC2021} generalized the Harary-Sachs theorem of graphs to uniform hypergraphs by expressing the trace as a weighted sum over a family of Veblen hypergraphs.
Chen, Bu and Zhou \cite{CBZ} gave a formula for the spectral moments (equivalently the traces) of a hypertree in terms of the number of sub-hypertrees.

The traces of a hypergraph are also related to the Estrada index of the hypergraph, which was recently introduced by Sun, Zhou and Bu \cite{SZB2021}.

\begin{defi}(\cite{SZB2021})\label{66}
Let $\mathcal{H}$ be an $m$-uniform hypergraph on $n$ vertices, and let $\lambda_{1},\ldots,\lambda_{N}$ be all eigenvalues
of the adjacency tensor $\mathcal{A}(\mathcal{H})$ of $\mathcal{H}$, where $N=n(m-1)^{n-1}$. The Estrada index of $\mathcal{H}$
is defined to be
$$EE(\mathcal{H})=\sum_{i=1}^{N}e^{\lambda_{i}}.$$
\end{defi}

By Eq. (\ref{traeig}), it is easily seen that
$$EE(\mathcal{H})=\sum_{d=0}^{\infty}\frac{Tr_{d}(\mathcal{H})}{d!}.$$
When $m=2$, the Estrada index in Definition \ref{66} is exactly that of a graph, which was first introduced by Estrada \cite{E2000} in 2000 and found useful in measuring the degree of protein folding \cite{E2002} and the centrality of complex networks \cite{ER2005}.
So the Estrada index of hypergraphs may has potential applications in networks modelled as hypergraphs.
Pe\~na, Gutman and Rada \cite{PGR2007} conjectured that
 the path is the unique graph with the minimum Estrada index among all graphs (trees) with given order, and the star is the unique one with the maximum Estrada index among all trees with given order.
 The conjecture was partly proved by Das and Lee \cite{DL2009}, and completely proved by Deng \cite{Deng2009}.
 The other versions of Estrada index of hypergraphs were also investigated
by Duan, Dam and  Wang \cite{DDW} via signless Laplacian tensor or Laplacian tensor, and Lu, Xue and Zhu \cite{LXZ} via signless Laplacian matrix.
Recently, Fan et al. \cite{FYa} proved that among all hypertrees with fixed number of edges, the hyperpath is the unique one with minimum Estrada index and the hyperstar is the unique one with maximum Estrada index, which provided a hypergraph version of the result of Pe\~na-Gutman-Rada conjecture.

We shall note here the development of spectral hypergraph theory.
Since the Perron-Frobenius theorem of nonnegative matrices was generalized to  nonnegative tensors \cite{CPZ1, FGH, YY1, YY2, YY3},
 the spectral hypergraph theory develops rapidly on many topics, such as the spectral radius \cite{BL,FanTPL, GCH2022,KLM2014,LSQ, LKS2018,LM}, the eigenvariety \cite{FBH,FTL,FHBproc}, the spectral symmetry \cite{FHB,FLW, SQH2015,Zhou}, the eigenvalues of hypertrees \cite{ZKSB}.

In this paper, we will give new expressions of the traces of uniform hypergraphs, especially for hypertrees and unicyclic hypergraphs, and provide some perturbation results for the traces of a hypergraph when its structure is locally changed.
As an application of the trace results, we determine the unique hypertree with maximum Estrada index among all hypertrees with fixed number of edges and perfect matching. We characterize the linear unicyclic hypergraphs with maximum Estrada index among all unicyclic hypergraphs with fixed number of edges and given girth, and particularly determine with maximum Estrada index among all unicyclic hypergraphs with fixed number of edges and girth $3$.

\section{Preliminaries}
\subsection{Tensors and hypergraphs}
A \emph{tensor} (also called \emph{hypermatrix}) $\T=(t_{i_{1} i_2 \ldots i_{m}})$ of order $m$ and dimension $n$ over $\mathbb{C}$ refers to a
 multiarray of entries $t_{i_{1}i_2\ldots i_{m}}\in \mathbb{C}$ for all $i_{j}\in [n]:=\{1,2,\ldots,n\}$ and $j\in [m]$, which can be viewed to be the coordinates of the classical tensor (as a multilinear function) under a certain basis.
 Surely, if $m=2$, then $\T$ is a matrix of size $n \times n$.

Let $\H$ be a hypergraph.
$\H$ is called \emph{trival} if it contains only one vertex; otherwise, it is called \emph{nontrivial}.
$\H$ is called \emph{linear} if any two different edges intersect into at most one vertex.
Let $v \in V(\H)$, and let $E_v(\H)$ denote the set of edges of $\H$ that contains the vertex $v$.
The \emph{degree}
$d_{v}(\mathcal{H})$ of $v$ in $\mathcal{H}$ is the cardinality of $E_v(\H)$.
A vertex $v$ of $\mathcal{H}$ is called a \emph{cored vertex} if it has degree one. An edge $e$ of $\mathcal{H}$ is called a \emph{pendent edge} if it contains $|e|-1$ cored vertices.
A \emph{walk} $W$ in $\mathcal{H}$ is a sequence of alternate vertices
and edges: $v_{0}e_{1}v_{1}e_{2}\cdots e_{l}v_{l}$, where $v_{i}\neq v_{i+1}$ and $\{v_{i},v_{i+1}\}\subseteq e_{i}$ for $i=0,1,\ldots,l-1$.
If $v_{0}=v_{l}$, then $W$ is called a {\it circuit}, and is called a {\it cycle} if no vertices or edges are repeated except $v_{0}=v_{l}$.
The hypergraph $\mathcal{H}$ is said to be \emph{connected} if every two vertices are connected by a walk; $\H$ is called a \emph{hypertree} if it is connected and acyclic, and is called \emph{unicyclic} if it contains exactly one cycle.

A \emph{matching} $M$ of $\mathcal{H}$ is a set of vertex-disjoint edges of $\mathcal{H}$, and $M$ is called a \emph{perfect matching} of $\mathcal{H}$ if it covers all vertices of $\H$.
A multi-hypergraph is a hypergraph allowed to have multiple edges, and is called
\emph{m-valent} if each vertex has degree of multiple of $m$.
A \emph{Veblen hypergraph} is an $m$-uniform and $m$-valent multi-hypergraph.
Throughout of this paper, all hypergraphs are consider simple unless stated somewhere.

Hu, Qi and Shao \cite{HQS} introduced a class of hypergraphs which are constructed from simple graph.

\begin{defi}(\cite{HQS})\label{pow}
Let $G=(V(G),E(G))$ be a graph with vertex set $V=V(G)$ and edge set $E=E(G)$. For an integer $m\geq3$, the
$m$-th power of $G$, denoted by $G^{m}:=(V^{m},E^{m})$, is defined to be the $m$-uniform hypergraph with vertex
set $V^{m}=V\cup\{i_{e,1},\ldots,i_{e,m-2}:e\in E\}$ and edge set $E^{m}=\{e\cup\{i_{e,1},\ldots,i_{e,m-2}\}:e\in E\}$, where
$i_{e,1},\ldots,i_{e,m-2}$ are new vertices inserted to each edge $e\in E(G)$.
\end{defi}

By Definition \ref{pow}, the power $T^m$ of a tree $T$ is a hypertree, and the power $U^m$ of a unicyclic graph $U$ is a linear unicyclic hypergraph.
Denote $P_n,C_n,S_n$ respectively a path, a cycle and a star all with $n$ edges (as simple graphs).
Then $C_n^m$ is a linear cycle as hypergraph, and $P_n^m$ is called a \emph{hyperpath}, and $S_n^m$ is called a \emph{hyperstar}.
The \emph{center} of $S_n$ or $S_n^m$ is the vertex with maximum degree.

\subsection{Traces of hypergraphs}
Shao, Qi and Hu \cite{SQH2015} gave a graph interpretation for the $d$-th trace $\Tr_d(\T)$.
Let
$$\F_d=\{((i_1,\alpha_1),\ldots,(i_d,\alpha_d)): i_1 \le \cdots \le i_d, \alpha_j \in [n]^{m-1}\},$$
 where $i_j$ is called the \emph{primary index} (or \emph{root}) of the $m$-tuple
$f_j:=(i_j,\alpha_j)$ for $j \in [d]$.
Define an $i_j$-rooted directed star for the tuple $f_j$:
$$ S_{f_j}(i_j)=(V_j, \{(i_j,u_k): k=1,\ldots,m-1 \}),$$
where $V_j=\{i_j,u_1,\ldots,u_{m-1}\}$ is considered as a set by omitting multiple indices if $\alpha_j=(u_1,\ldots,u_{m-1})$.
So we get a multi-directed graph associated with $F$, denoted and defined as
$$R(F)=\cup_{j=1}^d S_{f_j}(i_j).$$

Denote by $b(F)$ the product of the factorial of the multiplicities of all arcs of $R(F)$, $c(F)$ the product of the factorial of the outdegree of the all vertices of $R(F)$, and
$\W(F)$ the set of vertex sequences of all Euler tours of $R(F)$.
Shao, Qi and Hu \cite{SQH2015} proved that
\begin{equation}\label{Shaoeq}\Tr_d(\T)=(m-1)^{n-1} \sum_{F \in \F_d} \frac{b(F)}{c(F)} \pi_F(\T) |\W(F)|,
\end{equation}
where $\pi_F(\T)=\prod_{i=1}^d t_{i_j,\alpha_j}$ if $F=((i_1,\alpha_1),\ldots,(i_d,\alpha_d))$.

Let $\H$ be an $m$-uniform hypergraph on $n$ vertices and let $\A(\H)$ be the adjacency tensor of $\H$.
 Given an ordering of the vertices of $\mathcal{H}$, let
$$\mathcal{F}_{d}(\mathcal{H}):=\{(e_{1}(v_{1}),\ldots,e_{d}(v_{d})):e_{i}\in E(\mathcal{H}),v_{1}\leq\cdots\leq v_{d}\},$$
be the set of $d$-tuples of ordered rooted edges, where $e_{i}(v_{i})$ is an edge $e_{i}$ with root of $v_{i}\in e_{i}$ for $i\in [d]$. Define a rooted directed star $S_{e_{i}}(v_{i})=(e_{i},\{(v_{i},u):u\in e_{i}\backslash\{v_{i}\}\})$ for each $i\in [d]$,
and multi-directed graph $R(F)=\bigcup_{i=1}^{d}S_{e_{i}}(v_{i})$ associated with $F\in \mathcal{F}_{d}(\mathcal{H})$. Let
$$\mathcal{F}_{d}^{\epsilon}(\mathcal{H}):=\{F\in \mathcal{F}_{d}(\mathcal{H}):R(F)\hspace{0.4em}is\hspace{0.4em}Eulerian\}.$$
For an $F\in\mathcal{F}_{d}^{\epsilon}(\mathcal{H})$, denote $V(F):=V(R(F))$, $r_{v}(F)$ the number of edges in $F$ with $v$ as the root, and $d_{v}^{+}(F)=(m-1)r_{v}(F)$ (namely, the outdegree of $v$ in $R(F)$). Denote by $\tau(F):=\tau_{u}(R(F))$ the
number of \emph{arborescences} of $R(F)$ with root $u$ (namely, a directed $u$-rooted spanning tree such that all vertices except
$u$ has a directed path from itself to $u$), which is equal to the principal minor of the Laplacian matrix $L(R(F))$ of $R(F)$ by
deleting the row and column indexed by $u$ \cite{EB1987, TS1941}. As $R(F)$ is Eulerian, $\tau_{u}(R(F))$ is independent of the choice of the root
$u$ so that the root $u$ is omitted. Fan et al. \cite{FYa} give an expression of the $d$-th trace of $\mathcal{H}$ as follows.

\begin{lem}[\cite{FYa}]\label{TraF}
For an $m$-uniform hypergraph $\mathcal{H}$ on $n$ vertices,
$$Tr_{d}(\mathcal{H})=d(m-1)^{n}\sum_{F\in \mathcal{F}_{d}^{\epsilon}(\mathcal{H})}\frac{\tau(F)}{\prod_{v\in V(F)}d_{v}^{+}(F)}.$$
\end{lem}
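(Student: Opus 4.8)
The plan is to specialize the Shao--Qi--Hu trace formula \eqref{Shaoeq} to the adjacency tensor $\A(\H)$ and then to rewrite the resulting sum over ordered rooted stars as a sum over $\F_{d}^{\e}(\H)$, evaluating the Euler-tour count $|\W(F)|$ by the BEST theorem (de~Bruijn, van~Aardenne-Ehrenfest, Smith, Tutte).

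First I would isolate the surviving terms in \eqref{Shaoeq}. A term indexed by $F=((i_{1},\alpha_{1}),\dots,(i_{d},\alpha_{d}))\in\F_{d}$ contributes only when $\pi_{F}(\A(\H))\neq 0$, which forces each $(i_{j},\alpha_{j})$ to be an edge of $\H$ rooted at $v_{i_{j}}$, with $\alpha_{j}$ one of the $(m-1)!$ orderings of the remaining vertices; for every such $F$ one has $\pi_{F}(\A(\H))=((m-1)!)^{-d}$. Since $R(F)$, and hence $b(F)$, $c(F)$ and $|\W(F)|$, depends only on the underlying arc multiset and not on the orderings $\alpha_{j}$, I would group the $((m-1)!)^{d}$ ordered tuples lying over a single rooted-edge tuple $\bar F\in\F_{d}(\H)$. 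The resulting factor $((m-1)!)^{d}$ cancels exactly against $\pi_{F}(\A(\H))=((m-1)!)^{-d}$, reducing \eqref{Shaoeq} to
\[
\Tr_{d}(\H)=(m-1)^{n-1}\sum_{\bar F\in\F_{d}(\H)}\frac{b(\bar F)}{c(\bar F)}\,|\W(\bar F)|,
\]
and only those $\bar F$ with $R(\bar F)$ Eulerian survive, since otherwise $|\W(\bar F)|=0$; that is, $\bar F\in\F_{d}^{\e}(\H)$.

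The key step is to evaluate $|\W(\bar F)|$. For the Eulerian multigraph $R(\bar F)$ with $L=d(m-1)$ arcs, the BEST theorem gives the number of Eulerian circuits as $\tau(\bar F)\prod_{v\in V(\bar F)}(d_{v}^{+}(\bar F)-1)!$, the arborescence count $\tau(\bar F)$ being root-independent. Cutting each cyclic tour at its $L$ arcs yields $L$ distinct linear edge sequences, while collapsing the parallel arcs (whose orderings contribute the factor $b(\bar F)$) identifies $b(\bar F)$ linear edge sequences with one vertex sequence; hence
\[
|\W(\bar F)|=\frac{d(m-1)\,\tau(\bar F)\prod_{v\in V(\bar F)}(d_{v}^{+}(\bar F)-1)!}{b(\bar F)}.
\]
Substituting this together with $c(\bar F)=\prod_{v}(d_{v}^{+}(\bar F))!$ cancels $b(\bar F)$ and leaves the telescoping product $\prod_{v}(d_{v}^{+}-1)!/(d_{v}^{+})!=\prod_{v}1/d_{v}^{+}$, which yields $\Tr_{d}(\H)=d(m-1)^{n}\sum_{\bar F\in\F_{d}^{\e}(\H)}\tau(\bar F)/\prod_{v}d_{v}^{+}(\bar F)$, as asserted.

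The main obstacle I anticipate is bookkeeping rather than conceptual. I must fix the precise convention behind $\W(F)$ (linear vertex sequences with a marked initial vertex versus cyclic tours) and match it to the exact normalization of the BEST theorem, because the two factors $L=d(m-1)$ and $b(\bar F)$ relating Eulerian circuits, linear edge sequences and vertex sequences are exactly what the final constant depends on. The second delicate point is verifying that the grouping map is genuinely $((m-1)!)^{d}$-to-one: one must treat tuples with repeated roots or repeated edges consistently, keeping the nondecreasing-root ordering convention identical on the $\F_{d}$ and $\F_{d}(\H)$ sides.
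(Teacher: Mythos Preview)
The paper does not prove this lemma itself; it is quoted from \cite{FYa} without argument, so there is no in-paper proof to compare against. Your proposal---specializing the Shao--Qi--Hu formula \eqref{Shaoeq} to $\A(\H)$, collapsing the $((m-1)!)^{d}$ orderings of each rooted star against $\pi_F(\A(\H))=((m-1)!)^{-d}$, and then invoking the BEST theorem to evaluate $|\W(F)|$---is correct and is exactly the natural route given the ingredients the paper assembles in Section~2.2 (it cites \cite{EB1987,TS1941} precisely for the arborescence/Euler-tour count). The two bookkeeping points you flagged are the only places requiring care, and both resolve as you indicated: the factor $d(m-1)$ comes from linearizing a cyclic tour at each of its arcs, the division by $b(F)$ accounts for permutations of parallel arcs giving the same vertex sequence, and the grouping map is genuinely $((m-1)!)^{d}$-to-one because $\F_d$ and $\F_d(\H)$ are both tuples ordered only by root, so repeated rooted edges retain their positional identity on both sides.
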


For each $F\in \mathcal{F}_{d}^{\epsilon}(\mathcal{H})$, we get a multi-hypergraph induced by the edges in $F$ by omitting the roots, denoted by $\mathcal{V}_{F}$, which is an $m$-uniform and $m$-valent multi-hypergraph called {\it Veblen hypergraph} \cite{CC2021}. On the other side, given a Veblen hypergraph $H$, a rooting of $H$ is an ordering $F=(e_{1}(v_{1}),\ldots,e_{t}(v_{t}))$ of all edges of $H$, where $v_{i}$ is the root of $e_{i}$ for $i\in[t]$, and $v_{1}\leq\cdots\leq v_{t}$ under the given order of the vertices of $H$. If $R(F)$ is Eulerian, then $F$ is called an {\it Euler rooting} of $H$; in this case, $H$ is called {\it Euler rooted} with each edge rooted as in $F$ by omitting the order.
Note that an Euler rooted hypergraph may have more than one Euler rooting as a vertex can occur as roots of different edges.
Denote by $\mathscr{R}(H)$ the set of Euler rooting of $H$.

Denote by $\V_{d}(\mathcal{H})$ the set of connected Veblen hypergraphs with $d$ edges associated with $\mathcal{H}$ as follows:
$$\V_{d}(\mathcal{H})=\cup_{\G\in \CC(\mathcal{H})}\{\mathcal{V}_{F}: F\in \mathcal{F}_{d}^{\epsilon}(\mathcal{H}), \underline{\mathcal{V}_{F}}=\G\},$$
where $\CC(\mathcal{H})$ denotes the set of connected sub-hypergraphs of $\mathcal{H}$, and $\underline{H}$ denotes the underlying hypergraph of a multi-hypergraph $H$ which is obtained removing duplicate edges of $H$.

For a hypergraph $\G$, denote
$$\WW_d(\G)=\{\mathcal{V}_{F}: F\in \mathcal{F}_{d}^{\epsilon}(\G), \underline{\mathcal{V}_{F}}=G\},$$
that is, the set of Veblen hypergraph with $\G$ as underlying hypergraph which has an Euler rooting.
So,
$$\V_{d}(\mathcal{H})=\cup_{\G\in \CC(\mathcal{H})}\WW_{d}(\G).$$
For a Veblen hypergraph $H$,
denote
$$C_{H}=\sum_{F\in \R(H)}\frac{\tau(F)}{\prod_{v\in V(F)}d_{v}^{+}(F)},$$
and
\begin{equation}\label{sml}\tr_{d}(\G)=\sum_{H\in \WW_{d}(\G)}C_{H}.
\end{equation}

By Lemma \ref{TraF}, we get another expression of $\Tr_{d}(\mathcal{H})$ as follows:
\begin{equation}\label{newex2}
\begin{split}
\Tr_{d}(\mathcal{H})=&d(m-1)^{n}\sum_{H\in \V_d(\mathcal{H})}\sum_{F\in \R(H)}\frac{\tau(F)}{\prod_{v\in V(F)}d_{v}^{+}(F)}\\
=&d(m-1)^{n}\sum_{\G\in \CC(\mathcal{H})}\sum_{H\in \WW_{d}(G)}C_{H}\\
=&d(m-1)^{n}\sum_{\G\in \CC(\mathcal{H})}\tr_{d}(\G).
\end{split}
\end{equation}

\begin{lem}\label{newex}
Let $\H$ be an $m$-uniform hypergraph on $n$ vertices.
Then
$$\Tr_{d}(\mathcal{H})=d(m-1)^{n}\sum_{G\in \CC(\mathcal{H})}\tr_{d}(G).$$
\end{lem}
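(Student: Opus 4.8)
The plan is to derive Lemma~\ref{newex} by reorganizing the single sum in Lemma~\ref{TraF} according to the combinatorial object that each index $F$ produces. Starting from
\[
\Tr_d(\mathcal{H}) = d(m-1)^n \sum_{F \in \mathcal{F}_d^\epsilon(\mathcal{H})} \frac{\tau(F)}{\prod_{v \in V(F)} d_v^+(F)},
\]
I would view $F \mapsto \mathcal{V}_F$ as the map sending each Euler-rooted tuple to its induced (root-forgotten) Veblen hypergraph, and then sort these Veblen hypergraphs by their underlying simple hypergraph $\underline{\mathcal{V}_F}$. The whole argument is in spirit the reindexing already displayed in Eq.~\eqref{newex2}; what remains is to certify that this reindexing partitions the index set $\mathcal{F}_d^\epsilon(\mathcal{H})$ correctly and that the weights aggregate into the quantities $C_H$ and $\tr_d(G)$.

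The key structural step is to pin down the range and fibres of $F\mapsto\mathcal{V}_F$. For any $F\in\mathcal{F}_d^\epsilon(\mathcal{H})$ the digraph $R(F)$ is Eulerian, hence connected; this forces $\mathcal{V}_F$ and its underlying hypergraph $G:=\underline{\mathcal{V}_F}$ to be connected, and since every edge of $\mathcal{V}_F$ is an edge of $\mathcal{H}$ we get $G\in\CC(\mathcal{H})$. The balance of in- and out-degrees at each vertex of $R(F)$ gives $d_v(\mathcal{V}_F)=m\,r_v(F)$, so $\mathcal{V}_F$ is indeed an $m$-valent (Veblen) hypergraph with $d$ edges. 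Conversely, fixing a Veblen hypergraph $H$ with $\underline{H}=G\in\CC(\mathcal{H})$, the fibre $\{F\in\mathcal{F}_d^\epsilon(\mathcal{H}):\mathcal{V}_F=H\}$ is exactly the set $\R(H)$ of Euler rootings of $H$, because reconstructing an admissible $F$ from $H$ is precisely the choice of a root for each of the $d$ edges (respecting the order $v_1\le\cdots\le v_d$) making $R(F)$ Eulerian. Thus $\mathcal{F}_d^\epsilon(\mathcal{H})=\bigcup_{G\in\CC(\mathcal{H})}\bigcup_{H\in\WW_d(G)}\R(H)$ is a disjoint union.

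Substituting this partition into the formula of Lemma~\ref{TraF} and breaking the single sum into the triple sum over $G\in\CC(\mathcal{H})$, over $H\in\WW_d(G)$, and over $F\in\R(H)$, I would use that summing the weight $\tau(F)/\prod_{v\in V(F)}d_v^+(F)$ over $F\in\R(H)$ equals $C_H$ by definition, and that summing $C_H$ over $H\in\WW_d(G)$ equals $\tr_d(G)$ by Eq.~\eqref{sml}. The outer sum over $G$ then produces $d(m-1)^n\sum_{G\in\CC(\mathcal{H})}\tr_d(G)$, which is the assertion.

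The main obstacle I expect lies in the fibre step rather than the formal manipulation: one must check that $F\mapsto\mathcal{V}_F$ is well defined, that its fibres are \emph{exactly} $\R(H)$ with no overcounting---keeping in mind that a single vertex may root several edges and that one $H$ can admit several distinct Euler rootings---and that the double union is genuinely disjoint, so that no $F$ is attributed to two different pairs $(G,H)$. Once the exhaustiveness and disjointness of the partition are secured, the regrouping of the sum and the recognition of $C_H$ and $\tr_d(G)$ are purely bookkeeping.
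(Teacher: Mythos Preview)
Your proposal is correct and follows exactly the paper's approach: the paper's proof is precisely the regrouping displayed in Eq.~\eqref{newex2}, partitioning $\mathcal{F}_d^\epsilon(\mathcal{H})$ first by the Veblen hypergraph $\mathcal{V}_F$ and then by its underlying connected sub-hypergraph $\underline{\mathcal{V}_F}\in\CC(\mathcal{H})$, and recognizing the inner sums as $C_H$ and $\tr_d(G)$. If anything, you are more explicit than the paper about why $\mathcal{V}_F$ is connected and why the fibres over $H$ coincide with $\R(H)$.
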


At the end of this section, we give an inequality involved with combinatorial numbers, which will be used in the later proofs.

\begin{lem}\label{combineq}
Let $x,y$ be positive integers, and $a,b$ be nonnegative integers. Then
\begin{equation}\label{ineq}(x+y+a)! b!+(x+y+b)! a! > (x+a)! (y+b)! + (x+b)! (y+a)!.\end{equation}
\end{lem}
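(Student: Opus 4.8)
The plan is to prove the inequality by converting each product of factorials into a Beta integral and then analyzing a single integral for positivity. First I would record the two symmetries of the statement: it is invariant under swapping $a\leftrightarrow b$ (and under $x\leftrightarrow y$), so I may assume without loss of generality that $a\ge b$ and set $k=a-b\ge 0$. A quick sanity check shows that a term-by-term comparison is hopeless: pairing $(x+y+a)!\,b!$ with $(x+a)!\,(y+b)!$ and $(x+y+b)!\,a!$ with $(x+b)!\,(y+a)!$, one of the two differences can be negative (e.g. $x=y=1,a=5,b=0$), so only a global argument will succeed. The crucial observation that makes such an argument possible is that in every one of the four products on both sides the two factorial arguments sum to the same value $N:=x+y+a+b$. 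Hence, dividing the desired inequality by the positive number $(N+1)!$ and using $\frac{p!\,q!}{(p+q+1)!}=\int_0^1 t^p(1-t)^q\,dt$ for nonnegative integers $p,q$, I reduce the claim to showing $I>0$, where
\[
I=\int_0^1\Big[t^{x+y+a}(1-t)^b+t^{x+y+b}(1-t)^a-t^{x+a}(1-t)^{y+b}-t^{x+b}(1-t)^{y+a}\Big]\,dt.
\]

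Next I would factor the integrand. Pulling out the common factor $t^{x+b}(1-t)^b$ (legitimate since $a\ge b$ makes $x+b$ and $b$ the minimal exponents), the bracket becomes $t^{x+b}(1-t)^b\,\Phi(t)$ with
\[
\Phi(t)=t^{y+k}+t^{y}(1-t)^{k}-t^{k}(1-t)^{y}-(1-t)^{y+k}=\big(t^{y}-(1-t)^{y}\big)\big(t^{k}+(1-t)^{k}\big).
\]
This factorization is the key algebraic step, and I expect spotting the Beta representation together with this clean factorization to be the main obstacle; once it is in hand the rest is soft. The point is that $\Phi$ is \emph{antisymmetric} about the midpoint, $\Phi(1-t)=-\Phi(t)$, while the weight $w(t)=t^{x+b}(1-t)^b$ has no definite symmetry.

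Finally I would exploit the symmetry about $t=\tfrac12$. Writing $w=w_s+w_a$ with $w_s(t)=\tfrac12\big(w(t)+w(1-t)\big)$ symmetric and $w_a(t)=\tfrac12\big(w(t)-w(1-t)\big)$ antisymmetric, the substitution $t\mapsto 1-t$ shows that $\int_0^1 w_s(t)\Phi(t)\,dt=0$ (an even function times an odd function), so $I=\int_0^1 w_a(t)\Phi(t)\,dt$. Since $w_a\Phi$ is a product of two antisymmetric functions it is symmetric, giving $I=2\int_{1/2}^1 w_a(t)\Phi(t)\,dt$. On $(\tfrac12,1)$ one has $t>1-t\ge 0$, hence
\[
w_a(t)=\tfrac12\,t^{b}(1-t)^{b}\big(t^{x}-(1-t)^{x}\big)>0,\qquad \Phi(t)=\big(t^{y}-(1-t)^{y}\big)\big(t^{k}+(1-t)^{k}\big)>0,
\]
using $x\ge 1$ and $y\ge 1$ respectively. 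Thus the integrand is strictly positive on the open interval, forcing $I>0$ and proving the strict inequality. I would close by remarking that the hypotheses $x,y\ge 1$ are exactly what is needed for strictness, since $w_a\equiv 0$ when $x=0$ and $\Phi\equiv 0$ when $y=0$, in which degenerate cases both sides of \eqref{ineq} coincide.
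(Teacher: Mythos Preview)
Your proof is correct, but it follows a genuinely different route from the paper's argument.

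The paper proceeds by elementary manipulation: assuming without loss of generality $a\ge b$ and $x\ge y$, it pairs $(x+y+a)!\,b!$ with $(x+a)!\,(y+b)!$ and $(x+y+b)!\,a!$ with $(x+b)!\,(y+a)!$, factors each difference as $(x+a)!\,b!\big((x+a+1)\cdots(x+a+y)-(b+1)\cdots(b+y)\big)$ and similarly for the second pair, and then splits into two cases according to whether $x+b\ge a$. In the first case both brackets are nonnegative; in the second case the second bracket is negative, and one concludes by showing both that $(x+a)!\,b!\ge (x+b)!\,a!$ and that the positive bracket dominates the absolute value of the negative one. Your approach, by contrast, recognises that in all four products the two factorial arguments sum to the same value $N=x+y+a+b$, converts the inequality via the Beta identity $p!\,q!/(p+q+1)!=\int_0^1 t^p(1-t)^q\,dt$ into the positivity of a single integral, and then uses the clean factorisation $\Phi(t)=(t^y-(1-t)^y)(t^k+(1-t)^k)$ together with the symmetry about $t=\tfrac12$ to conclude. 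The paper's method is entirely elementary and self-contained but requires a case distinction; yours is more conceptual, case-free, and immediately extends (via Gamma functions) to real parameters $x,y>0$ and $a,b\ge 0$, which the combinatorial argument does not.
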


\begin{proof}
By symmetry we can assume that $a \ge b$ and $x \ge y$.
Then
\begin{align*} &(x+y+a)! b!-(x+a)! (y+b)! +(x+y+b)! a!-(x+b)! (y+a)!\\
& =(x+a)!b!\left((x+a+1)\cdots (x+a+y)-(b+1)\cdots (b+y)\right)\\
& ~~~+
(x+b)!a! \left((x+b+1)\cdots (x+b+y)-(a+1)\cdots (a+y)\right).
\end{align*}
So, if $x+b \ge a$, then Eq. (\ref{ineq}) holds.
Otherwise, we have
\begin{align*} &(x+y+a)! b!-(x+a)! (y+b)! +(x+y+b)! a!-(x+b)! (y+a)!\\
& =(x+a)!b!\left((x+a+1)\cdots (x+a+y)-(b+1)\cdots (b+y)\right)\\
& ~~~-
(x+b)!a! \left((a+1)\cdots (a+y)-(x+b+1)\cdots (x+b+y)\right).
\end{align*}
It is easily seen that $(x+a)!b!\ge (x+b)!a!$ and
$$(x+a+1)\cdot (x+a+y)-(b+1)\cdot (b+y)>(a+1)\cdots (a+y)-(x+b+1)\cdots (x+b+y).$$
The result also follows.
\end{proof}

\section{Traces of hypertrees}
The \emph{coalescence} of two nontrivial connected hypergraphs $\H_1$ and $\H_2$ is the hypergraph obtained by identifying one vertex $v_1$ of $\H_1$ and one vertex $v_2$ of $\H_2$ to produce a new vertex $u$, denoted by $\H_1(v_1) \odot  \H_2(v_2)$, also written as $\H_1(u) \odot  \H_2(u)$.
Let $v'_1$ be another vertex of $\H_1$ different from $v_1$.
The coalescence $\H_1(v'_1) \odot  \H_2(v_2)$ is called obtained from $\H_1(v_1) \odot  \H_2(v_2)$ by \emph{relocating $\H_1$ from $v_1$ to $v'_1$}.
A vertex $u$ of a onnected hypergraph $\H$ is called a \emph{cut vertex} of $\H$  if $\H$ can be written as $\H_1(u) \odot  \H_2(u)$, where  $\H_1,\H_2$ are both nontrivial and connected, called the \emph{branches} of $\H$.

The following lemma gives a characterization of Veblen hypergraphs associated with a hypertree.

\begin{lem}[\cite{FYa}]\label{tree_root}
Let $H$ be an $m$-uniform Veblen multi-hypergraph whose underlying hypergraph $\underline{H}$ is a hypertree.
Then $H$ is uniquely Euler rooted such that all vertices of each edge occur as roots of the edge in a same number of times, and hence every edge of $H$ repeats in a multiple of $m$ times.
\end{lem}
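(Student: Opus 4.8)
The plan is to reduce the statement to a triviality about flows on a tree. The key object is the bipartite \emph{incidence graph} $B$ of the underlying hypertree $\underline{H}$: its nodes are the vertices and the distinct edges of $\underline{H}$, and its links are the incidences $\{v,e\}$ with $v\in e$. First I would observe that a cycle of $\underline{H}$ in the sense defined in Section~2 is exactly a cycle of $B$ (alternating vertices and edges), so that the hypertree condition ``connected and acyclic'' translates precisely into ``$B$ is a tree''. This is the only structural input needed, and it yields the vertex count $n=(m-1)t+1$ for a hypertree with $t$ edges as a byproduct.

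Next I would record, for any rooting $F$, the out- and in-degrees of a vertex $v$ in the directed multigraph $R(F)$. Writing $k_e$ for the multiplicity of an edge $e$ in $H$ and $r_{e,v}$ for the number of copies of $e$ rooted at $v$, a copy rooted at $v$ contributes $m-1$ out-arcs while a copy containing $v$ but rooted elsewhere contributes one in-arc, so that
\[
d_v^{+}(F)=(m-1)\sum_{e\ni v} r_{e,v},\qquad d_v^{-}(F)=\Big(\sum_{e\ni v}k_e\Big)-\sum_{e\ni v} r_{e,v}.
\]
Since $R(F)$ is Eulerian exactly when it is balanced ($d_v^{+}=d_v^{-}$ at every $v$) and its underlying graph is connected, the balance equations simplify to $\sum_{e\ni v}r_{e,v}=\tfrac1m\sum_{e\ni v}k_e$ for every $v$. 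Setting $x_{e,v}:=r_{e,v}-k_e/m$, this says $\sum_{e\ni v}x_{e,v}=0$ at each vertex-node of $B$, while the identity $\sum_{v\in e}r_{e,v}=k_e$ gives $\sum_{v\in e}x_{e,v}=0$ at each edge-node. Thus $x$ is a function on the links of $B$ with vanishing net sum at every node, i.e.\ an element of the cycle space of the tree $B$.

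Because a tree has trivial cycle space, the only such $x$ is identically zero; concretely one peels leaves of $B$, using that a leaf node has a single surviving incidence whose value its balance equation forces to zero, after which a new leaf appears, and so on until the tree is exhausted. Hence every Euler rooting satisfies $r_{e,v}=k_e/m$ for all $v\in e$, which is the asserted equal-distribution property; since the multiset of roots on the copies of each edge is exactly the data $(r_{e,v})_{v\in e}$, this profile determines the rooting, giving uniqueness, and integrality of $r_{e,v}$ then forces $m\mid k_e$, the final claim. It remains to check that the profile $r_{e,v}=k_e/m$ genuinely is an Euler rooting: the degrees are balanced by construction, and $R(F)$ is connected because within each edge the root is joined to all the other vertices and $\underline{H}$ is connected, so $R(F)$ is Eulerian and the rooting exists.

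The main obstacle is conceptual rather than computational: recognizing that the Eulerian balance conditions, once the forced value $r_v(F)=\tfrac1m\sum_{e\ni v}k_e$ is extracted, are precisely the zero-divergence conditions on the incidence tree $B$, so that uniqueness and equal distribution collapse to the triviality of a tree's cycle space. The remaining work is bookkeeping: confirming that the leaf-peeling terminates correctly and that the balanced profile is genuinely connected, hence Eulerian.
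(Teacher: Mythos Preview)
The paper does not give its own proof of this lemma; it is cited from \cite{FYa}. Your reduction to the cycle space of the incidence tree $B$ is elegant and the uniqueness half is correct: the balance equations $\sum_{e\ni v}r_{e,v}=\tfrac1m\sum_{e\ni v}k_e$ together with $\sum_{v\in e}r_{e,v}=k_e$ force the shifted variables $x_{e,v}=r_{e,v}-k_e/m$ into the (trivial) cycle space of the tree $B$, so any Euler rooting must have the profile $r_{e,v}=k_e/m$.

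There is, however, a circularity in the existence/divisibility step. You deduce $m\mid k_e$ from the integrality of $r_{e,v}$ in an Euler rooting, and then use $m\mid k_e$ to verify that the profile $r_{e,v}=k_e/m$ really defines an Euler rooting; but the first deduction already presupposes that some Euler rooting exists. What is missing is a direct proof of $m\mid k_e$ from the Veblen hypothesis alone. Your own machinery supplies it: over $\mathbb{Z}/m\mathbb{Z}$, assign $y_{v,e}:=k_e$ to each link of $B$; the $m$-valency condition gives $\sum_{e\ni v}y_{v,e}\equiv 0$ at every vertex node, while $\sum_{v\in e}y_{v,e}=mk_e\equiv 0$ at every edge node, so $y$ lies in the cycle space of the tree $B$ over $\mathbb{Z}/m\mathbb{Z}$ and hence vanishes, i.e.\ $m\mid k_e$ for all $e$. (Equivalently, peel a cored vertex of a pendant edge: its Veblen condition reads $k_e\equiv 0\pmod m$, and deleting that edge preserves $m$-valency at the attaching vertex since one subtracts a multiple of $m$; then induct.) With this in place your existence check, and the rest of the argument, go through as written.
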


Let $H$ be an $m$-uniform Veblen hypergraph with $d$ edges whose underlying hypergraph $\underline{H}=\breve{\T}$ is a hypertree.
By Lemma \ref{tree_root}, $m \mid d$, and
$H$ can be expressed as a weighted hypertree $\breve{\T}(\omega)$, where
$$ \omega: E(\breve{\T}) \to \Z^+,$$
such that the multiplicity of an edge $e \in E(H)$ is $m\omega(e)$, and $\omg(\breve{\T}):=\sum_{e \in E(\breve{\T})} \omega(e)=d/m$.
So,
\begin{equation}\label{WdT} \WW_d(\breve{\T})=\{\breve{\T}(\omega): \omg(\breve{\T})=d/m\},
\end{equation}
and $ \WW_d(\breve{\T}) \ne \emptyset$ if and only if $m \mid d$.
Fan et al. \cite{FYh} proved that
\begin{equation}\label{SpeT}
 C_{\breve{\T}(\omega)}  = (m-1)^{-|V(\breve{\T})|}m^{(m-2)|E(\breve{\T})|} \left(\prod_{e \in E(\breve{\T})}\omg(e)\right)^{m-1} \prod_{v \in V(\breve{\T})} \frac{(d_v(\hat{\T}(\omega))-1)!}{\prod_{e \in E_v(\breve{\T})} \omega(e)!},
 \end{equation}
 where $d_v(\breve{\T}(\omega))=\sum_{e \in E_v(\breve{\T}) } \omega(e)$, the weighted degree of the vertex $v$ in $\breve{\T}(\omega)$.

Let $\T$ be an $m$-uniform hypertree.
By Lemma \ref{tree_root}, for $d \in \mathbb{Z}^+$, $\V_d(\T) \ne \emptyset$ if and only if $m \mid d$.
By above discussion, we have
$$\V_d(\T)=\cup_{\breve{\T}\in \CC(\T)}\WW_d(\breve{\T})=
\cup_{\breve{\T}\in \CC(\T)}\{\breve{\T}(\omega): \omg(\breve{\T})=d/m\}.$$
 Note that $\prod_{v \in V(\breve{\T})}\prod_{e \in E(\breve{T})} \omg(e)! =\prod_{e \in E(\hat{\T})} (\omg(e)!)^m$.
By Lemma \ref{newex}, Eqs. (\ref{WdT}) and (\ref{SpeT}), we get an expression of the traces of the hypertrees.

\begin{thm}\label{TraTree}
Let $\T$ be an $m$-uniform hypertree.
If $m \mid d$, then
\begin{equation}
\Tr_{d}(\mathcal{T})=d(m-1)^{|V({\T})|}\sum_{\breve{\T}\in \CC(\T)}\tr_d(\breve{\T}), ~~~\tr_d(\breve{\T})=
\sum_{\omg: \omg(\breve{\T})=d/m}C_{\breve{\T}(\omega)},
\end{equation}
where
\begin{equation}\label{paraT}C_{\breve{\T}(\omega)}  =(m-1)^{-|V(\breve{\T})|}m^{(m-2)|E(\breve{\T})|}\prod_{v\in V(\breve{\T})} (d_{v}(\breve{\T}(\omega))-1)!\prod_{e\in E(\breve{\T})}\frac{\omega(e)^{m-1}}{(\omega(e)!)^{m}};
\end{equation}
otherwise, $\Tr_{d}(\mathcal{T})=\tr_d(\breve{\T})=0$.
\end{thm}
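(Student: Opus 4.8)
The plan is to assemble the identity directly from the three ingredients already established: Lemma~\ref{newex}, the structural description of $\WW_d(\breve{\T})$ in Eq.~(\ref{WdT}), and the closed form for $C_{\breve{\T}(\omega)}$ in Eq.~(\ref{SpeT}). First I would apply Lemma~\ref{newex} with $\H=\T$ and $n=|V(\T)|$ to write
\[
\Tr_d(\T) = d(m-1)^{|V(\T)|}\sum_{\breve{\T}\in\CC(\T)}\tr_d(\breve{\T}),
\]
which immediately yields the outer factor and the sum over connected sub-hypergraphs $\breve{\T}\in\CC(\T)$. Since $\T$ is a hypertree, each $\breve{\T}\in\CC(\T)$ is itself an $m$-uniform hypertree, so Lemma~\ref{tree_root} is available for it.

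Next I would expand the inner quantity $\tr_d(\breve{\T})$. By the defining Eq.~(\ref{sml}) one has $\tr_d(\breve{\T})=\sum_{H\in\WW_d(\breve{\T})}C_H$, and by Eq.~(\ref{WdT}) the index set $\WW_d(\breve{\T})$ is precisely the collection of weighted hypertrees $\breve{\T}(\omega)$ with $\omg(\breve{\T})=d/m$. Substituting this parametrization gives the claimed inner sum $\tr_d(\breve{\T})=\sum_{\omg:\,\omg(\breve{\T})=d/m}C_{\breve{\T}(\omega)}$.

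The only genuine computation is the passage from the form of $C_{\breve{\T}(\omega)}$ in Eq.~(\ref{SpeT}) to the form in Eq.~(\ref{paraT}). The two expressions differ only in how the factorial denominators are grouped: Eq.~(\ref{SpeT}) carries the vertex-indexed double product $\prod_{v\in V(\breve{\T})}\prod_{e\in E_v(\breve{\T})}\omega(e)!$, while Eq.~(\ref{paraT}) carries the edge-indexed product $\prod_{e\in E(\breve{\T})}(\omega(e)!)^m$. Here I would invoke the observation recorded just before the theorem: since $\breve{\T}$ is $m$-uniform, each edge $e$ has exactly $m$ incident vertices, so the factor $\omega(e)!$ occurs exactly $m$ times in the double product, whence the two denominators coincide. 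Regrouping $\prod_e\omega(e)^{m-1}$ with $\prod_e(\omega(e)!)^{-m}$ and pulling $\prod_v(d_v(\breve{\T}(\omega))-1)!$ to the front reproduces Eq.~(\ref{paraT}) verbatim, with the factors $(m-1)^{-|V(\breve{\T})|}$ and $m^{(m-2)|E(\breve{\T})|}$ carried through unchanged.

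Finally, for the divisibility dichotomy: Lemma~\ref{tree_root} forces $m\mid d$ whenever a Veblen hypergraph over $\breve{\T}$ admits an Euler rooting, so by Eq.~(\ref{WdT}) we have $\WW_d(\breve{\T})=\emptyset$ whenever $m\nmid d$. In that case $\tr_d(\breve{\T})$ is an empty sum and hence vanishes for every $\breve{\T}\in\CC(\T)$, giving $\Tr_d(\T)=0$. Because every step is a direct substitution of an already-proved formula, I do not expect a substantive obstacle; the one place requiring care is the denominator regrouping, namely confirming that the per-edge multiplicity in the vertex-indexed product is exactly $m$ and that no stray powers of $m$ or $(m-1)$ are gained or lost when the products are reorganized.
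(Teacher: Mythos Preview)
Your proposal is correct and follows the paper's own argument essentially verbatim: the paper too simply combines Lemma~\ref{newex}, Eq.~(\ref{WdT}), and Eq.~(\ref{SpeT}), invoking the same regrouping identity $\prod_{v\in V(\breve{\T})}\prod_{e\in E_v(\breve{\T})}\omega(e)!=\prod_{e\in E(\breve{\T})}(\omega(e)!)^m$ and the same appeal to Lemma~\ref{tree_root} for the $m\nmid d$ case.
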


Let $\mathcal{H}_{0},\mathcal{H}_{1},\ldots,\mathcal{H}_{p}$ be pairwise disjoint connected hypergraphs, where $p\geq1$.
Let $v_{1},\ldots,v_{p}\in V(\mathcal{H}_{0})$, and $u_i \in V(\H_i)$ for $i \in [p]$. Denote by $\mathcal{H}_{0}(v_{1},\ldots,v_{p})\odot(\mathcal{H}_{1}(u_{1}),\ldots,\mathcal{H}_{p}(u_{p}))$ the hypergraph obtained
from $\mathcal{H}_{0}$ by attaching $\mathcal{H}_{1},\ldots,\mathcal{H}_{p}$ to $\mathcal{H}_{0}$ with $u_{i}$ identified
with $v_{i}$ for each $i\in [p]$, also written as $\mathcal{H}_{0}(v_{1},\ldots,v_{p})\odot(\mathcal{H}_{1}(v_{1}),\ldots,\mathcal{H}_{p}(v_{p}))$.
Let $\H$ be a hypergraph, and $\H_i$ be a sub-hypergraph of $\H$ for $i \in [s+t]$.
Denote by $\CC(\H;\H_1,\ldots,\H_s,{\H}^\times_{s+1},\ldots,{\H}^\times_{s+t})$ the set of connected sub-hypergraphs of $\H$ which contain the edges of $\H_1,\ldots,\H_s$ and contain no edges of ${\H}_{s+1},\ldots,{\H}_{s+t}$,
where $s,t$ are nonnegative integers.
Let $S \subseteq V(\H)$. Denote $\H-S$ the sub-hypergraph of $\H$ obtained by deleting the vertices $S$ together with the edges containing the vertices of $S$.

\begin{figure}[h]
\centering
\includegraphics[scale=.8]{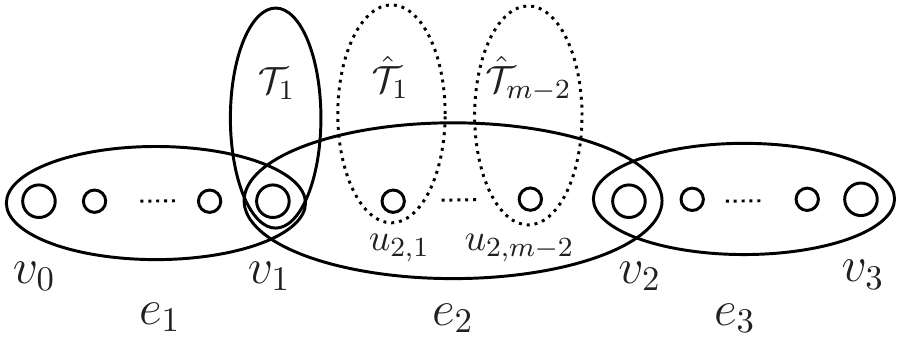}
\caption{The hypergraph $\H$ in Lemma \ref{Treepert}}\label{Hcoale}
\end{figure}

\begin{lem}\label{Treepert}
Let $P_3$ be a path on vertices $v_0,v_1,v_2,v_3$ with edges $\{v_{i-1},v_{i}\}$ for $i=1,2,3$.
Let $P_3^m$ be the power of $P_3$ with edges $e_i=\{v_{i-1},v_i ,u_{i,1},\ldots,u_{i,m-2}\}$ for $i=1,2,3$.
Let $\H=P_3^m(v_1,u_{2,1},\ldots,u_{2,m-2}) \odot ({\T}_1(v_1),\hat{\T}_1(u_{2,1}),\ldots,\hat{\T}_{m-2}(u_{2,m-2}))$,
where ${\T}_1$ is a nontrivial $m$-uniform hypertree, and $\hat{\T}_i$ is a $m$-uniform hypertree allowed to be trivial with only one vertex $u_{2,i}$ for $i \in [m-2]$; see Fig. \ref{Hcoale}.
Let $\T(w)$ be a nontrivial $m$-uniform hypertree with root $w$.
Then
\begin{equation}
\Tr_d(\H(v_1)\odot\T(w)) \ge \Tr_d(\H(v_2)\odot\T(w)),\end{equation}
with strict inequality if $m \mid d$ and $d/m \ge 2$.
\end{lem}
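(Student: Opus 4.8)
The plan is to expand the traces as weighted sums over connected sub-hypertrees and reduce everything to the combinatorial inequality of Lemma \ref{combineq}. If $m \nmid d$ both traces vanish by Theorem \ref{TraTree}, so assume $m \mid d$ and set $\ell = d/m \ge 1$. Both $\mathcal{K}_1 := \H(v_1)\odot\T(w)$ and $\mathcal{K}_2 := \H(v_2)\odot\T(w)$ are $m$-uniform hypertrees with the same number of vertices $|V(\H)|+|V(\T)|-1$, so the prefactor $d(m-1)^{|V(\mathcal{K}_i)|}$ in Lemma \ref{newex} is identical and it suffices to compare $\sum_{G}\tr_d(G)$ over connected sub-hypertrees. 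Write $\H' := \H - (V(\T_1)\setminus\{v_1\})$ for the part of $\H$ obtained by deleting the branch $\T_1$ (i.e.\ $P_3^m$ with the $\hat{\T}_i$ kept at the $u_{2,i}$); then $\H'$ is invariant under the reflection $\sigma$ that swaps $v_1\leftrightarrow v_2$, $e_1\leftrightarrow e_3$, $v_0\leftrightarrow v_3$ and fixes $e_2$ together with each $u_{2,i}$ and its branch $\hat{\T}_i$.

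Next I would classify the connected sub-hypertrees. Those using no edge of $\T$ form the common set $\CC(\H)$ and cancel. Those using at least one edge of $\T$ must contain the attaching vertex ($v_1$ for $\mathcal{K}_1$, $v_2$ for $\mathcal{K}_2$); I split them by whether they use an edge of $\T_1$ and whether they use the central edge $e_2$. A sub-hypertree avoiding both $\T_1$ and $e_2$ lies in $\{e_1\}\cup\T$ at $v_1$, resp.\ $\{e_3\}\cup\T$ at $v_2$, and these are matched bijectively by $\sigma$ with equal weight, so they cancel; likewise those using $e_2$ but no edge of $\T_1$ lie in $\H'\cup\T$ and are matched by $\sigma$ (which fixes $e_2$ and merely relocates the $\T$-branch from $v_1$ to $v_2$), again cancelling. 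A sub-hypertree of $\mathcal{K}_2$ using an edge of $\T_1$ but not $e_2$ cannot exist, since $\T_1$ at $v_1$ and $\T$ at $v_2$ are then disconnected; the corresponding family for $\mathcal{K}_1$ (an edge of $\T_1$ and an edge of $\T$ meeting at $v_1$, with no $e_2$) is therefore unmatched and contributes a strictly positive amount whenever it is nonempty.

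The decisive family consists of sub-hypertrees using an edge of $\T_1$, an edge of $\T$, and the edge $e_2$. For such $G$ I factor $C_{G(\omega)}$ from Eq.\ (\ref{paraT}): every factor except the vertex factorials is multiplicative over the three pieces $G_0\subseteq\H'$ (containing $e_2$), $G_1\subseteq\T_1$, $G_2\subseteq\T$, and among the vertex factorials only those at $v_1,v_2$ feel where $\T$ is attached. Writing $p=d_{v_1}(G_0(\omega))$, $q=d_{v_2}(G_0(\omega))$ (both $\ge1$) and $r,s\ge1$ for the weighted root-degrees of $G_1,G_2$, the $v_1,v_2$-factorials equal $(p+r+s-1)!\,(q-1)!$ for $\mathcal{K}_1$ and $(p+r-1)!\,(q+s-1)!$ for $\mathcal{K}_2$. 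Grouping $G_0$ with $\sigma G_0$ (which swaps $p\leftrightarrow q$ and, being an automorphism of $\H'$, leaves every other factor unchanged), these add up to
\[
(p+r+s-1)!\,(q-1)! + (q+r+s-1)!\,(p-1)! \quad\text{versus}\quad (p+r-1)!\,(q+s-1)! + (q+r-1)!\,(p+s-1)!,
\]
and Lemma \ref{combineq} with $(x,y,a,b)=(r,s,p-1,q-1)$ shows the first exceeds the second (the case $\sigma G_0=G_0$, i.e.\ $p=q$, is the $a=b$ instance). Summing over $\sigma$-orbits of $G_0$, over $G_1,G_2$, and over weightings with $|\omega|=\ell$, this family contributes a nonnegative amount. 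Hence $\Tr_d(\mathcal{K}_1)-\Tr_d(\mathcal{K}_2)$ is the sum of the unmatched positive family and this nonnegative family, so it is $\ge 0$; it is $>0$ once $\ell\ge2$, since then the unmatched family already contains a sub-hypertree consisting of one edge of $\T_1$ and one edge of $\T$ at $v_1$ with weights summing to $\ell$, whose $C_{G(\omega)}>0$.

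The main obstacle is the bookkeeping in the last two paragraphs: one must verify that across each $\sigma$-orbit the non-factorial factors (the power $m^{(m-2)|E|}$, the edge-weight products $\prod \omega(e)^{m-1}/(\omega(e)!)^{m}$, the power of $(m-1)$, and all vertex factorials other than at $v_1,v_2$) are genuinely equal, so that only the displayed four-term factorial expression survives for Lemma \ref{combineq} to act on, and that the bijections induced by $\sigma$ and by relocating the $\T$-branch respect the constraint $|\omega|=\ell$. One must also confirm the classification is exhaustive and that every unmatched contribution is one of the manifestly positive ones, so that no negative term can arise on the $\mathcal{K}_2$ side.
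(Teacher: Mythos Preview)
Your proof is correct and follows essentially the same strategy as the paper: decompose the connected sub-hypertrees by whether they meet $\T$, $\T_1$, and $e_2$, cancel the matching families via the reflection $\sigma$ (the paper's $\phi$), isolate the unmatched positive family $\CC(\H_1;\T,\T_1,e_2^\times)$, and reduce the remaining ``decisive'' family to Lemma~\ref{combineq}. The only difference is that where the paper splits this decisive family into three sub-cases according to which of $e_1,e_3$ are present, you handle all of them at once by pairing each weighted $(G_0,\omega)$ with its $\sigma$-image---a mild streamlining of the same symmetric-sum device the paper uses in its Cases~2 and~3.
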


\begin{proof}
Let $\H_1:=\H(v_1)\odot\T(w)$ and $\H_2:=\H(v_2)\odot\T(w)$.
We first give decompositions of $\CC(\H_1)$ and $\CC(\H_2)$ as follows.
$$\CC(\H_i)=\CC(\H_i;\T^\times)\cup\CC(\H_i;\T,\T_1^\times)
\cup \CC(\H_i;\T,\T_1),i=1,2.$$
Furthermore, as all hypergraphs in $\CC(\H_i;\T,\T_1)$ are connected,
$$\CC(\H_1;\T,\T_1)=\CC(\H_1;\T,\T_1,e_2) \cup \CC(\H_1;\T,\T_1,e_2^\times),~
\CC(\H_2;\T,\T_1)=\CC(\H_2;\T,\T_1,e_2).$$
It is easily seen that $\CC(\H_1;\T^\times)=\CC(\H_2;\T^\times)$.
There is an isomorphism $\phi$ between $\H_1-(V(\T_1)\backslash\{v_1\})$ and  $\H_2-(V(\T_1)\backslash\{v_1\})$ which maps
 $v_0$ to $v_3$, $v_1$ to $v_2$.
  So, there is bijection from $\CC(\H_1;\T,\T_1^\times)$ to $\CC(\H_2;\T,\T_1^\times)$ such that $\G$ is mapped to  $\phi|_{\G}(\G)$, and $\G$ is isomorphic to $\phi|_{\G}(\G)$.


By Lemma \ref{newex} and the above decompositions, we  have
\begin{equation}\label{tradif}\small
\Tr_d(\H_1)-\Tr_d(\H_2)=d(m-1)^{|V(\H_1)|} \left(\sum_{\G \in \CC(\H_1;\T,\T_1,e_2)}\tr_d(\G)-\sum_{\G \in \CC(\H_2;\T,\T_1,e_2)}\tr_d(\G)+\sum_{\G \in \CC(\H_1;\T,\T_1,e_2^\times)}\tr_d(\G)\right).
\end{equation}
We will prove that
\begin{equation}\label{ineg}
\sum_{\G \in \CC(\H_1;\T,\T_1,e_2)}\tr_d(\G)\ge \sum_{\G \in \CC(\H_2;\T,\T_1,e_2)}\tr_d(\G),
\end{equation}
with strict inequality if $m \mid d$  and $d/m \ge 3$.

By Lemma \ref{TraTree}, we assume that $m \mid d$; otherwise $\tr_d(\G)=0$.
Note that $\CC(\H_2;\T,\T_1,e_2)$ or $\CC(\H_1;\T,\T_1,e_2)$ is nonempty if and only if $d/m \ge 3$.
So we also assume that $d/m \ge 3$.
For each $\G \in \CC(\H_2;\T,\T_1,e_2)$, we can write
$\G =\H'(v_2)\odot \T'(w)$, where $\H'$ is a sub-hypergraph of $\H$ which contains
$e_2$ and a rooted sub-hypertree $\T'_1(v_1)$ of $\T_1$, and $\T'(w)$ is a rooted sub-hypertree of $\T$.
There is a bijection $\psi$ between $\CC(\H_2;\T,\T_1,e_2)$ and $\CC(\H_1;\T,\T_1,e_2)$ such that $$\psi(\G)=\psi(\H'(v_2)\odot \T'(w))=\H'(v_1)\odot \T'(w)=:\tilde{\G}.$$
Also, each weight function $\omg: E(\G) \to \Z^+$ is naturally associated with a weight function $\tilde{\omg}: E(\tilde{\G}) \to \Z^+$ such that $\omg|_{E(\H')}=\tilde{\omg}|_{E(\H')}$ and  $\omg|_{E(\T'(w))}=\tilde{\omg}|_{E(\T'(w))}$,
and there is a bijection between $\WW_d(\G)$ and $\WW_d(\tilde{\G})$ such that $\G(\omg)$ is mapped to $\tilde{\G}(\tilde{\omg})$.

Let $d_{v_1}(\T'_1(\omg|_{E(\T'_1)})=x$ and $d_{w}(\T'(\omg|_{E(\T')})=y$.
We divide the discussion into cases.

Case 1: $e_1 \notin E(\G)$ and $e_3 \notin E(\G)$.
By Theorem \ref{TraTree},
$$C_{\G(\omg)}=(x+\omg(e_2)-1)!(y+\omg(e_2)-1)!f_{\G;v_1,v_2}(\omg),$$
$$C_{\tilde{\G}(\tilde{\omg})}=(x+y+\omg(e_2)-1)!(\omg(e_2)-1)!f_{\tilde{\G};v_1,v_2}(\tilde{\omg}),$$
where $f_{\G;v_1,v_2}(\omg)=f_{\tilde{\G};v_1,v_2}(\tilde{\omg})$ and
 \begin{equation}\label{fomg}
 f_{\G;v_1,v_2}(\omega):=(m-1)^{-|V(\G)|}m^{(m-2)|E(\G)|}\prod_{v\in V(\G)\backslash \{v_{1},v_{2}\}} (d_{v}(\G(\omega))-1)!\prod_{e\in E(\G)}\frac{\omega(e)^{m-1}}{(\omega(e)!)^{m}}.
 \end{equation}
So we have
$C_{\tilde{\G}(\tilde{\omg})}>C_{\G(\omg)}$, and
$\tr_d(\tilde{\G})>\tr_d(\G)$, which implies that
\begin{equation}\label{n1n3}
\sum_{\G \in \CC(\H_1;\T,\T_1,e_2,e_1^\times,e_3^\times )}\tr_d(\G)> \sum_{\G \in \CC(\H_2;\T,\T_1,e_2,e_1^\times,e_3^\times)}\tr_d(\G).
\end{equation}

Case 2: $e_1 \in E(\G)$ and $e_3 \notin E(\G)$, or $e_1 \notin E(\G)$ and $e_3 \in E(\G)$.
Let $\G[e_1] \in \CC(\H_2;\T,\T_1,e_2,e_1,e_3^\times)$.
  Let $\G[e_3] \in \CC(\H_2;\T,\T_1,e_2,e_1^\times, e_3)$, which is obtained from $\G[e_1]$ by relocating the edge $e_1$ from $v_1$ to $v_2$ and labelling the edge $e_1$ as $e_3$.
So, there is a bijection from $\CC(\H_2;\T,\T_1,e_2,e_1,e_3^\times)$ to $\CC(\H_2;\T,\T_1,e_2,e_1^\times, e_3)$ which maps $G[e_1]$ to
$G[e_3]$.
Also by the map $\psi$ defined before,
$\psi$ is a bijection between $\CC(\H_2;\T,\T_1,e_2,e_1,e_3^\times)$ and
$\CC(\H_1;\T,\T_1,e_2,e_1,e_3^\times)$ which maps $\G[e_1]$ to
$\tilde{\G}[e_1]:=\psi(\G[e_1])$, and also a  bijection between
$\CC(\H_2;\T,\T_1,e_2,e_1^\times, e_3)$ and
$\CC(\H_2;\T,\T_1,e_2,e_1^\times, e_3)$ which maps
$\G[e_3]$ to $\tilde{\G}[e_3]:=\psi(\G[e_3])$.

Each weight function $\omg: E(\G[e_1]) \to \mathbb{Z}^+$ induces a weight function $\omg': E(\G[e_3]) \to \mathbb{Z}^+$ such that $\omg'(e_3)=\omg(e_1)$ and $\omg'|_{E(\G[e_3])\backslash\{e_3\}}=\omg|_{E(\G[e_1])\backslash\{e_1\}}$.
As defined before, $\omg$ and $\omg'$ induce $\tilde{\omg}$ defined on $E(\tilde{\G}[e_1])$ and $\tilde{\omg'}$ defined on $E(\tilde{\G}[e_3])$.
By Lemma \ref{TraTree}, we have
$$C_{\G[e_1](\omg)}=(x+\omg(e_1)+\omg(e_2)-1)!(y+\omg(e_2)-1)!f_{\G[e_1];v_1,v_2}(\omg),$$
$$C_{\G[e_3](\omg')}=(x+\omg(e_2)-1)!(y+\omg(e_1)+\omg(e_2)-1)!f_{\G[e_3];v_1,v_2}(\omg'),$$
$$C_{\tilde{\G}[e_1](\tilde{\omg})}=(x+y+\omg(e_1)+\omg(e_2)-1)!(\omg(e_2)-1)!f_{\tilde{\G}[e_1];v_1,v_2}(\tilde{\omg}),$$
$$C_{\tilde{\G}[e_3](\tilde{\omg'})}=(x+y+\omg(e_2)-1)!(\omg(e_1)+\omg(e_2)-1)!f_{\tilde{\G}[e_3];v_1,v_2}(\tilde{\omg'}),$$
where $f_{\G[e_1^m];v_1,v_2}(\omg)=f_{\G[e_3^m];v_1,v_2}(\omg')
=f_{\tilde{\G}[e_1];v_1,v_2}(\tilde{\omg})=f_{\tilde{\G}[e_1];v_1,v_2}(\tilde{\omg'})$ as defined in Eq. (\ref{fomg}).
By Lemma \ref{combineq},
$$C_{\tilde{\G}[e_1](\tilde{\omg})}+C_{\tilde{\G}[e_3](\tilde{\omg'})}>C_{\G[e_1](\omg)}+C_{\G[e_3](\omg')}.$$
So
$$ \tr_d(\tilde{\G}[e_1])+\tr_d(\tilde{\G}[e_3])>\tr_d(\G[e_1])+\tr_d(\G[e_3]),$$
which implies that
\begin{equation}\label{n1orn3}\small
\sum_{\tilde{\G} \in \CC(\H_1;\T,\T_1,e_2,e_1,e_3^\times ) \cup \CC(\H_1;\T,\T_1,e_2,e_1^\times,e_3)
}\tr_d(\tilde{\G})
> \sum_{\G \in \CC(\H_2;\T,\T_1,e_2,e_1,e_3^\times) \cup \CC(\H_2;\T,\T_1,e_2,e_1^\times,e_3)}\tr_d(\G).
\end{equation}

Case 3: $e_1 \in E(\G)$ and $e_3 \in E(\G)$.
As noted before, each hypergraph $\G \in \CC(\H_2;\T,\T_1,e_2,e_1,e_3)$ is bijectively corresponding to
$\tilde{\G}:=\psi(\G) \in \CC(\H_1;\T,\T_1,e_2,e_1,e_3)$ and each weight function $\omg$ on $E(\G)$ induces a weight function $\tilde{\omg}$ on $E(\tilde{\G})$.
Also, each $\omg$ (respectively, $\tilde{\omg}$) is associated with another weight function
$\omg'$ (respectively, $\tilde{\omg}'$) only by swapping the weight of $e_1$ and the weight of $e_3$.
By Lemma \ref{TraTree}, we have
$$ C_{\G(\omg)}=(x+\omg(e_1)+\omg(e_2)-1)!(y+\omg(e_2)+\omg(e_3)-1)!f_{\G;v_1,v_2}(\omg),$$
$$ C_{\G(\omg')}=(x+\omg(e_2)+\omg(e_3)-1)!(y+\omg(e_1)+\omg(e_2)-1)!f_{\G;v_1,v_2}(\omg'),$$
$$ C_{\tilde{\G}(\tilde{\omg})}=(x+y+\omg(e_1)+\omg(e_2)-1)!(\omg(e_2)+\omg(e_3)-1)!f_{\tilde{\G};v_1,v_2}(\tilde{\omg}),$$
$$ C_{\tilde{\G}(\tilde{\omg}')}=(x+\omg(e_2)+\omg(e_3)-1)!(y+\omg(e_1)+\omg(e_2)-1)!f_{\tilde{\G};v_1,v_2}(\tilde{\omg}'),$$
where $f_{\G;v_1,v_2}(\omg)=f_{\G;v_1,v_2}(\omg')=f_{\tilde{\G};v_1,v_2}(\tilde{\omg})
=f_{\tilde{\G};v_1,v_2}(\tilde{\omg}')$ as defined in Eq. (\ref{fomg}).
By Lemma \ref{combineq},
$$C_{\tilde{\G}(\tilde{\omg})}+C_{\tilde{\G}(\tilde{\omg'})}>C_{G(\omg)}+C_{G(\omg')}.$$
So
\begin{equation}\label{symsum}
\begin{split}
\tr_d(\tilde{\G})-\tr_d(\G)&=
\sum_{\omg: \omg(\G)=d/m} \left(C_{\tilde{\G}(\tilde{\omg})}-C_{\G(\omg)}\right)\\
&=
\frac{1}{2}\sum_{\omg: \omg(G)=d/m} \left((C_{\tilde{\G}(\tilde{\omg})}+C_{\tilde{\G}(\tilde{\omg}')})-(C_{\G(\omg)}+C_{\G(\omg')})\right)\\
&>0,\end{split}
\end{equation}
which implies that
\begin{equation}\label{1and3}
\sum_{\tilde{\G} \in \CC(\H_1;\T,\T_1,e_2,e_1,e_3)
}\tr_d(\tilde{\G})
> \sum_{\G \in \CC(\H_2;\T,\T_1,e_2,e_1,e_3)}\tr_d(\G).
\end{equation}

Combining Eqs. (\ref{n1n3}), (\ref{n1orn3}) and (\ref{1and3}), we arrive at the inequality in Eq. (\ref{ineg}).
Note that $\tr_d(\G) \ge 0$ for each $\G \in \CC(\H_1;\T,\T_1,e_2^\times)$, with strict inequality if and only if $m \mid d$  and $d/m \ge 2$ as in this case $\CC(\H_1;\T,\T_1,e_2^\times) \ne \emptyset$ by Lemma \ref{tree_root}.
So the result follows by Eq. (\ref{tradif}).
\end{proof}

We note that the method to prove the inequality (\ref{symsum}) may be called ``\emph{symmetric sum}'', which will be used in later discussion, e.g. Lemma \ref{Treepert2} and Lemma \ref{cycleper}.

\section{Traces of linear unicyclic hypergraphs}

In this section we will discuss the traces of linear unicyclic hypergraphs.
We need the following lemma to characterize the Veblen hypergraph containing cycles.

\begin{lem}[\cite{FYa}]\label{core}
Let $H$ be an $m$-uniform Veblen multi-hypergraph, and let $e$ be an edge of $\underline{H}$  which contains a cored vertex.
If $H$ has an Euler rooting, then $e$ repeats $k \cdot m$ times for some positive integer $k$, and all cored vertices in $e$ occur as a root of $e$ in $k$ times.
\end{lem}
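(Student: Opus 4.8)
The plan is to read off both conclusions from the single requirement that an Euler rooting makes $R(F)$ balanced, namely that the in-degree equals the out-degree at every vertex of $R(F)$. The whole argument localizes at a cored vertex, so neither the global Eulerian connectivity nor the $m$-valence of the \emph{other} vertices will actually be needed; the balance condition alone delivers everything.

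First I would fix a cored vertex $v$ lying in the edge $e$. Since $v$ has degree one in $\underline{H}$, it is contained in no edge of $\underline{H}$ other than $e$; consequently, in the multi-hypergraph $H$ the only edges incident with $v$ are the copies of $e$. Write $r$ for the multiplicity of $e$ in $H$ (so $r\ge 1$), and let $c$ denote the number of those $r$ copies that are rooted at $v$ in the given Euler rooting $F$.

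The key step is the arc bookkeeping at $v$ in $R(F)=\bigcup_i S_{e_i}(v_i)$. Each copy of $e$ rooted at $v$ contributes the $m-1$ out-arcs of the star $S_e(v)$ and no in-arc at $v$, whereas each copy of $e$ rooted at one of the other $m-1$ vertices of $e$ contributes exactly one in-arc at $v$ (the arc from that root to $v$) and no out-arc at $v$. Because $v$ meets no further edges, these account for all arcs incident with $v$, so the out-degree is $d_v^{+}(F)=(m-1)c$ and the in-degree $d_v^{-}(F)=r-c$. Imposing the balance condition $d_v^{+}(F)=d_v^{-}(F)$ forced by the Eulerian property gives $(m-1)c=r-c$, that is $mc=r$.

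From $mc=r$ with $r\ge 1$ and $c$ a nonnegative integer I conclude $m\mid r$ and $r=km$ with $k:=c=r/m\ge 1$, which is the first assertion. Running the identical computation at an arbitrary cored vertex $w$ of $e$ shows its own root-count also equals $r/m=k$, giving the second assertion that every cored vertex of $e$ is rooted exactly $k$ times. I do not expect a genuine obstacle here; the one point requiring care is the claim that a cored vertex receives arcs only from, and emits arcs only into, copies of $e$. This is precisely where the degree-one hypothesis enters, and it is what eliminates any contribution of other edges to the in/out balance at $v$, making the computation $mc=r$ exact rather than merely a bound.
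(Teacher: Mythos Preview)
Your argument is correct: localizing at a cored vertex $v$ of $e$, the only arcs of $R(F)$ incident with $v$ come from copies of $e$, and the in/out balance $(m-1)c=r-c$ forces $r=mc$, after which the same computation at every other cored vertex of $e$ gives the uniform root count $k=c$.

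There is nothing to compare against here: the present paper does not prove this lemma but cites it from \cite{FYa}, so no proof appears in the text you were given. Your proof is self-contained and would serve perfectly well in its place.
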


Let $\U$ be an $m$-uniform linear unicyclic hypergraph.
As $\U$ is linear, $\U$ contains a cycle $C_n^m$ of length $n \ge 3$, and $U$ is obtained from $C_n^m$ by some hypertrees to the vertices of $C_n^m$, namely $\U=C_n^m(v_1,\ldots,v_p) \odot (\T_1(v_1),\ldots,\T_p(v_p))$, where $\T_i$'s are hypertrees for $i \in [p]$ and $p$ is a nonnegative integer.

We have a decomposition of the Veblen hypergraph associated with $\U$:
\begin{equation}\label{decom} \V_d(\U)=\V_d(\U,[\hat{C_n^m}]) \cup \V_d(\U,[C_n^m]),
\end{equation}
where $\mathcal{V}_d(U^m;[\hat{C}_n^m])$ (respectively, $\mathcal{V}_d(U^m;[C_n^m])$ ) denotes the subset of  $\mathcal{V}_d(U^m)$ which consists of Veblen hypergraphs that contain no $C_n^m$ (respectively, contain  $C_n^m$).

For each $H \in \V_d(\U,[\hat{C_n^m}])$, $\underline{H}$ is a hypertree.
By Lemma \ref{tree_root}, $\V_d(\U,[\hat{C_n^m}]) \ne \emptyset$ if and only if $m \mid d$; and in this case
\begin{equation}\label{Utree}\V_d(\U,[\hat{C_n^m}])=\{\breve{\T}(\omg): \breve{\T} \in \mathcal{C}_{tree}^*(\U), \omg(\breve{\T})=d/m\},
\end{equation}
where $\mathcal{C}_{tree}^*(\U)$ denoted the connected sub-hypergraphs of $\U$ which are hypertrees.

For each $H \in \V_d(\U,[C_n^m])$,  $\underline{H}$ contains $C_n^m$.
 We may assume that $\underline{H}=C_n^m(v_1,\ldots,v_q) \odot (\T'_1(v_1),\ldots,\T'_q(v_q))$, where $\T'_i(v_i)$ is a subhypertree of $\T_i(v_i)$ with root $v_i$ for $i \in [q]$ and $q \le p$.
As each $v_i$ is a cut vertex, $H|_{\T'_i(v_i)}$, the limitation of $H$ on the vertex set of $\T'_i(v_i)$, is still a Veblen hypergraph (hypertree).
By Lemma \ref{tree_root}, the number of edges of $H|_{\T'_i(v_i)}$, denoted by $d_i$, is a multiple of $m$, and
$H|_{\T'_i(v_i)}=\T'_i(v_i)(\omg^i)$ with $\omg^i(\T'_i(v_i))=d_i/m$.
So, $H|_{C_n^m}$ is a Veblen hypergraph.
 By Lemma \ref{core}, as each edge of $C_n^m$ contains cored vertices,
 the number of edges of $H|_{C_n^m}$,  denoted by $d_0$, is a multiple of $m$, and
 $H|_{C_n^m}=C_n^m(\omg^0)$ with $\omg^0(C_n^m)=d_0/m$.
 So $H$ is a weighted hypergraph $\underline{H}(\omg)$ with weight
$$\omg: E(\underline{H}) \to \Z^+$$
such that each edge $e$ of $H$ has multiplicity $m \omg(e)$, and $\omg(H)=d/m$.
By above discussion, we find that $\V_d(\U,[C_n^m])\ne \emptyset$ if and only if $m \mid d$; and in this case
\begin{equation}\label{Ucycle}\V_d(\U,[{C_n^m}])=\{\mathcal{G}(\omg): \mathcal{G} \in \mathcal{C}_{cycle}^*(\U), \omg(\mathcal{G})=d/m\},
\end{equation}
and for $\mathcal{G} \in \mathcal{C}_{cycle}^*(\U)$,
\begin{equation}\label{Ucycle1}
\WW_d(\G)=\{\mathcal{G}(\omg): \omg(\mathcal{G})=d/m\},
\end{equation}
where $\mathcal{C}_{cycle}^*(\U)$ denots the connected sub-hypergraphs of $\U$ which contains the cycle $C_n^m$.

\begin{cor}\label{Udeco1}
Let $\U$ be a linear unicyclic hypergraph which contains a cycle $C_n^m$.
Then
\begin{equation}\label{Udeco}
\V_d(\U)=\V_d(\U,[\hat{C_n^m}]) \cup \V_d(\U,[C_n^m]),
\end{equation}
and each of $\V_d(\U,[\hat{C_n^m}])$ and $\V_d(\U,[C_n^m])$ is nonempty if and only if $m \mid d$,
where $\V_d(\U,[\hat{C_n^m}])$ and $\V_d(\U,[C_n^m])$ are defined in Eqs. (\ref{Utree}) and (\ref{Ucycle}) respectively.
\end{cor}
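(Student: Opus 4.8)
The plan is to assemble the corollary directly from the structural facts already developed in the preceding discussion, treating it as a clean restatement of the two cases analyzed there. The decomposition (\ref{Udeco}) itself requires no work: every connected Veblen hypergraph $H \in \V_d(\U)$ either contains the full cycle $C_n^m$ or it does not, and these two possibilities are exactly the defining conditions of $\V_d(\U,[C_n^m])$ and $\V_d(\U,[\hat{C_n^m}])$. So the union is a genuine partition and the first displayed equation is immediate. It remains to prove the nonemptiness characterization for each piece, which is where the lemmas enter.

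For the tree part $\V_d(\U,[\hat{C_n^m}])$, I would first argue that the underlying hypergraph of any $H$ in this set is a hypertree. Since $\U$ is linear unicyclic, its \emph{only} cycle is $C_n^m$; as $\underline{H}$ is a connected sub-hypergraph of $\U$ that does not contain all edges of $C_n^m$, any cycle of $\underline{H}$ would be a cycle of $\U$ and hence equal to $C_n^m$, a contradiction. Thus $\underline{H}=\breve{\T}$ is a hypertree with $\breve{\T}\in \mathcal{C}_{tree}^*(\U)$. Now apply Lemma \ref{tree_root}: $H$ is uniquely Euler rooted and every edge repeats a multiple of $m$ times, so $H=\breve{\T}(\omg)$ with $\omg(\breve{\T})=d/m$, forcing $m\mid d$. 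Conversely, whenever $m\mid d$, any weighted hypertree $\breve{\T}(\omg)$ with $\omg(\breve{\T})=d/m$ is Euler rooted (again by Lemma \ref{tree_root}) and lies in the set, which is therefore nonempty exactly when $m\mid d$; this reproduces (\ref{Utree}).

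For the cycle part $\V_d(\U,[C_n^m])$, I would use the cut-vertex decomposition. Writing $\underline{H}=C_n^m(v_1,\ldots,v_q)\odot(\T'_1(v_1),\ldots,\T'_q(v_q))$ with each $v_i$ a cut vertex, the restriction $H|_{\T'_i(v_i)}$ is itself a Veblen hypergraph whose underlying hypergraph is a hypertree, so by Lemma \ref{tree_root} its edge count $d_i$ is a multiple of $m$. For the remaining piece $H|_{C_n^m}$ I would invoke Lemma \ref{core}: because $\U$ is linear, every edge of $C_n^m$ contains a cored vertex, so each cycle edge of $H$ repeats a multiple of $m$ times and the cycle edge count $d_0$ is a multiple of $m$. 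Summing, $d=d_0+\sum_i d_i$ is a multiple of $m$, and $H=\underline{H}(\omg)$ for a weight $\omg$ with $\omg(\underline{H})=d/m$ and $\underline{H}\in \mathcal{C}_{cycle}^*(\U)$. The converse (existence of an Euler rooting for any such $\underline{H}(\omg)$ when $m\mid d$) then yields both (\ref{Ucycle}), (\ref{Ucycle1}) and the nonemptiness of this piece precisely when $m\mid d$.

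The routine parts are the set-theoretic partition and the tree case. The load-bearing step, and the one I would write out most carefully, is the cycle case: one must check that chopping off the tree branches at the cut vertices $v_i$ really leaves a well-defined Veblen hypergraph $H|_{C_n^m}$ on the cycle to which Lemma \ref{core} legitimately applies, and that linearity of $\U$ is what guarantees every edge of $C_n^m$ carries a cored vertex. Once the ``multiple of $m$'' conclusions on the branches and on the cycle are in place, the combined divisibility $m\mid d$ and the weighted-hypergraph description follow, completing the corollary.
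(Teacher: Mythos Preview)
Your proposal is correct and follows exactly the approach of the paper: the corollary is stated there as a direct summary of the preceding discussion, which handles the tree piece via Lemma~\ref{tree_root} and the cycle piece via the cut-vertex decomposition together with Lemma~\ref{tree_root} on the branches and Lemma~\ref{core} on the cycle edges (whose cored vertices are guaranteed by linearity). Your write-up simply makes that discussion explicit, including the partition, the two divisibility arguments, and the converse nonemptiness, so there is nothing to add or correct.
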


 Let $\mathcal{G}(\omg) \in \V_d(\U,[C_n^m])$ with $\mathcal{G}=C_n^m(v_1,\ldots,v_q) \odot (\T'_1(v_1),\ldots,\T'_q(v_q))$.
As discussed above,
$$\mathcal{G}(\omg)=C_n^m(\omg^0)(v_1,\ldots,v_q) \odot (\T'_1(\omg^1)(v_1),\ldots,\T'_q(\omg^q)(v_q)).$$
Note that $\mathcal{G}(\omg)$ is Euler rooted if and only if $C_n^m(\omg^0)$ and all of
$\T'_1(\omg^1),\ldots,\T'_q(\omg^q)$ are Euler rooted.

Suppose that $C_n$ has vertices $u_1,\ldots, u_n$ and edges $\{u_i,u_{i+1}\}$ for $i \in [n]$, $u_{n+1}=u_1$.
We label the edges of $C_n^m$ as $e_i=\{u_i,u_{i+1},w_{i1}, \ldots, w_{i,m-2}\}$ for $i \in [n]$.
Denote $\omg^0_i=\omg^0(e_i)$ for $i \in [n]$, and $\omg^0_{\min}=\min_{i \in [n]}\omg^0(e_i)$.
As shown in \cite{FYh},
the set of Euler rootings of $C_n^m(\omg^0)$ has a decomposition:
 $$\R(C_n^m(\omg^0))=\cup_{x=0}^{2\omega^0_{\min}}\R(C_n^m(\omg^0);x),$$
 where $\R(C_n^m(\omg^0);x)$ consists of those rootings $F$ such that $u_{i}$ acts a root of $e_{i}$ (respectively,  $e_{i-1}$) in $\omega^0_{i}-\omega^0_{\min}+x$ (respectively,  $\omega^0_{i-1}-\omega^0_{min}+x$) times for $i\in [n]$, and each vertex of  $e_i \backslash \{u_i,u_{i+1}\}$ acts as a root of $e_i$ in $\omega(e_i)$ times.
 So, $r_v(F)=d_v(C_n^m(\omg^0))$ for each $F \in \R(C_n^m(\omg^0);x)$.
Define $\Omega_{C_n^m}(\omg^0)$ as follows for the later discussion:
\begin{equation}\label{fonC}
\Omega_{C_n^m}(\omg^0)=\sum_{x=0}^{2\omega^0_{\min}} \prod_{i=1}^{n}\frac{(\omega^0_{i}!)^2}{(\omega^0_{i-1}+\omega^0_{\min}-x)!
(\omega^0_{i}-\omega^0_{\min}+x)!}
\sum_{l=0}^{n-1}\prod_{i=1}^{l}(\omega^0_{i}+\omega^0_{\min}-x)
\prod_{i=l+2}^{n}(\omega^0_{i}-\omega^0_{\min}+x).
\end{equation}

Consequently, we have a decompositon
$$ \R(\mathcal{G}(\omg))=\cup_{x=0}^{2\omega^0_{\min}}\R(\mathcal{G}(\omg);x),$$
where
$$\R(\mathcal{G}(\omg);x)=\{(F_0,F_1,\ldots,F_q): F_0 \in \R(C_n^m(\omg^0);x), F_i \in \R(\T'_i(\omg^i)),  i \in [q]\}.$$
By the formula given in \cite{FYh}, for each $(F_0,F_1,\ldots,F_q) \in \R(\mathcal{G}(\omg);x)$,
$$\tau(F_0)=2 m^{n(m-2)-1} \left(\prod_{i=1}^n \omg^0_i\right)^{m-2} \sum_{l=0}^{n-1} \prod_{i=1}^l (\omg_i^0+\omg_{\min}^0-x) \prod_{i=l+2}^n(\omg_i^0-\omg_{\min}^0+x),$$
$$\tau(F_i)=m^{(m-2)|E(\T'_i)|}\left(\prod_{e \in E(\T'_i)}\omg^i(e)\right)^{m-1}, i \in [q],$$
and hence $\tau(F)=\tau(F_0) \prod_{i=1}^q \tau(F_i)$, which equals
\begin{equation}
\tau(F)=2 m^{(m-2)|E(\mathcal{G})|-1}
\left(\prod_{e \in E(\mathcal{G})}\omg(e)\right)^{m-1}
\frac{\sum_{l=0}^{n-1} \prod_{i=1}^l (\omg_i^0+\omg_{\min}^0-x) \prod_{i=l+2}^n(\omg_i^0-\omg_{\min}^0+x)}{\left(\prod_{e \in E(C_n^m)}\omg(e)\right)^{m-1}}.
\end{equation}

Note that by Lemma \ref{tree_root}, for each $F_i \in \R(\T'_i(\omg^i))$, every vertex $v \in e \in \T'_i(\omg^i)$ acts as root of $e$ in $\omg^i(e)$ times.
 So, we have
$$|\R(\mathcal{G}(\omg);x)|=\prod_{v \in V(\mathcal{G})} {r_{v}(F) \choose r_v(e): e \in E_v(\mathcal{G})}
 =\prod_{v \in V(\mathcal{G})} \frac{r_{v}(F)!}{\prod_{e \in E_v(\mathcal{G})}  r_v(e)!}
=\frac{\prod_{v \in V(\mathcal{G})} d_v(\mathcal{G}(\omg))!}
{\prod_{v\in V(\mathcal{G})} \prod_{e \in E_v(\mathcal{G})}  r_v(e)!}.
$$
As
\begin{align*}
\prod_{v \in V(\mathcal{G})} d_v(\mathcal{G}(\omg))!&=
\prod_{e \in E_v(\mathcal{G})}   \prod_{v\in e} r_v(e)!
= \prod_{e \in \cup_{i=1}^q E(\T'_i)}  \prod_{v\in e} r_v(e)! \cdot \prod_{e \in E(C_n^m)}  \prod_{v\in e} r_v(e)!\\
&=\prod_{e \in \cup_{i=1}^q E(\T'_i)}  (\omg(e)!)^m \cdot
\prod_{e \in E(C_n^m)}(\omg(e)!)^{m-2} \cdot \prod_{i=1}^n (\omg_i^0+\omg_{\min}^0-x)!(\omg_i^0-\omg_{\min}^0+x)!\\
&=\prod_{e \in E(\mathcal{G})}  (\omg(e)!)^m \cdot \frac{\prod_{i=1}^n (\omg_i^0+\omg_{\min}^0-x)!(\omg_i^0-\omg_{\min}^0+x)!}{\prod_{i=1}^n (\omg^0_i!)^2},
\end{align*}
we have
\begin{equation}
|\R(\mathcal{G}(\omg);x)|=\frac{d_v(\mathcal{G}(\omg))!\prod_{i=1}^n (\omg^0_i!)^2}{\prod_{e \in E(\mathcal{G})}  (\omg(e)!)^m \prod_{i=1}^n (\omg_i^0+\omg_{\min}^0-x)!(\omg_i^0-\omg_{\min}^0+x)!}.
\end{equation}

Note that for each $F \in \R(\mathcal{G}(\omega);x)$, $$\prod_{v\in V(F)}d_{v}^{+}(F)=(m-1)^{|V(\mathcal{G})|}\prod_{v\in V(\mathcal{G})}d_{v}(\mathcal{G}(\omega)).$$
By definition, we have
\begin{equation}\label{paraC}
\begin{split}
C_{\mathcal{G}(\omega)}=&\sum_{F\in \R(\mathcal{G}(\omega))}\frac{\tau(F)}{\prod_{v\in V(F)}d_{v}^{+}(F)}=\sum_{x=0}^{2\omega_{\min}}\sum_{F\in \R(\mathcal{G}(\omega);x)}\frac{\tau(F)}{\prod_{v\in V(F)}d_{v}^{+}(F)}\\
&=\sum_{x=0}^{2\omega_{\min}} |\R(\mathcal{G}(\omg);x)| \cdot \frac{\tau(F)}{\prod_{v\in V(F)}d_{v}^{+}(F)}\\
=&2(m-1)^{-|V(\mathcal{G})|}m^{(m-2)|E(\mathcal{G})|-1}\prod_{v\in V(\mathcal{G})} (d_{v}(\mathcal{G}(\omega))-1)!\prod_{e\in E(\mathcal{G})}\frac{\omega(e)^{m-1}}{(\omega(e)!)^{m}} \cdot \Omega_{C_n^m}(\omg^0).
\end{split}
\end{equation}

We are now giving an expression for the traces of linear unicyclic hypergraps.

\begin{thm}\label{traU}
Let $\U$ be an $m$-uniform linear unicyclic hypergraph.
If $m \mid d$, then
\begin{equation}
\Tr_d(\U)=d(m-1)^{|V(\U)|} \left(\sum_{\breve{\T} \in \CC_{tree}(\U)}\tr_d(\breve{\T})+\sum_{\mathcal{G} \in \CC_{cycle}(\U)}\tr_d(\mathcal{G})\right),
\end{equation}
and
\begin{equation}
\tr_d(\breve{\T})=\sum_{\omg: \omg(\breve{\T})=d/m} C_{\breve{\T}(\omg)},~~~
\tr_d(\mathcal{G})=\sum_{\omg: \omg(\mathcal{G})=d/m} C_{\mathcal{G}(\omg)},
\end{equation}
where $C_{\breve{\T}(\omg)}$ and $C_{\mathcal{G}(\omg)}$ are defined in Eqs. (\ref{paraT}) and (\ref{paraC}) respectively;
otherwise, $\Tr_d(\U)=0$.
\end{thm}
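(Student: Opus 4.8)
The plan is to reduce everything to Lemma \ref{newex}, which already expresses $\Tr_d(\U)$ as $d(m-1)^{|V(\U)|}\sum_{\G\in\CC(\U)}\tr_d(\G)$, and then to split the index set $\CC(\U)$ of connected sub-hypergraphs according to whether or not the sub-hypergraph carries the unique cycle. First I would record that, because $\U$ is linear unicyclic, its only cycle is $C_n^m$; hence any connected sub-hypergraph of $\U$ is either acyclic — a hypertree, contributing to $\CC_{tree}(\U)$ — or it contains a cycle, which must then be $C_n^m$, placing it in $\CC_{cycle}(\U)$. This yields the disjoint decomposition $\CC(\U)=\CC_{tree}(\U)\cup\CC_{cycle}(\U)$, matching the Veblen-level decomposition $\V_d(\U)=\V_d(\U,[\hat{C_n^m}])\cup\V_d(\U,[C_n^m])$ of Corollary \ref{Udeco1} under the underlying-hypergraph map $H\mapsto\underline{H}$. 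Substituting this partition into Lemma \ref{newex} immediately produces the displayed factorization into a tree sum and a cycle sum.

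Next I would evaluate the two inner sums separately using the definition $\tr_d(\G)=\sum_{H\in\WW_d(\G)}C_H$ from Eq. (\ref{sml}). For a hypertree branch $\breve{\T}\in\CC_{tree}(\U)$, Eq. (\ref{WdT}) identifies $\WW_d(\breve{\T})$ with the weighted hypertrees $\breve{\T}(\omg)$ satisfying $\omg(\breve{\T})=d/m$, so $\tr_d(\breve{\T})=\sum_{\omg:\,\omg(\breve{\T})=d/m}C_{\breve{\T}(\omg)}$ with $C_{\breve{\T}(\omg)}$ given by Eq. (\ref{paraT}); this is precisely the content of Theorem \ref{TraTree}. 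For a cycle-containing branch $\G\in\CC_{cycle}(\U)$, Eq. (\ref{Ucycle1}) identifies $\WW_d(\G)$ with the weighted hypergraphs $\G(\omg)$ satisfying $\omg(\G)=d/m$, so $\tr_d(\G)=\sum_{\omg:\,\omg(\G)=d/m}C_{\G(\omg)}$, where $C_{\G(\omg)}$ is exactly the quantity already computed in Eq. (\ref{paraC}). Assembling these two evaluations completes the $m\mid d$ case.

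For $m\nmid d$, I would invoke Corollary \ref{Udeco1}: both $\V_d(\U,[\hat{C_n^m}])$ and $\V_d(\U,[C_n^m])$ are empty, so $\V_d(\U)=\emptyset$, every $\WW_d(\G)$ is empty, each $\tr_d(\G)=0$, and hence $\Tr_d(\U)=0$, as claimed.

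As for difficulty: the genuinely substantial computation — the closed form for $C_{\G(\omg)}$ in Eq. (\ref{paraC}), obtained from the decomposition $\R(\mathcal{G}(\omg))=\cup_{x}\R(\mathcal{G}(\omg);x)$, the arborescence counts $\tau(F_0)$ and $\tau(F_i)$, and the multinomial count of $|\R(\mathcal{G}(\omg);x)|$ — has already been carried out before the statement. What remains is bookkeeping, and the only point that deserves genuine care is verifying that the tree/cycle dichotomy of $\CC(\U)$ is actually a partition; this is exactly where linearity of $\U$ (forcing any sub-cycle to coincide with the full $C_n^m$) is essential, and I would state it explicitly rather than take it for granted.
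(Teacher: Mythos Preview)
Your proposal is correct and follows essentially the same route as the paper. The only cosmetic difference is the entry point: the paper starts from the first line of Eq.~(\ref{newex2}), splitting the sum over Veblen hypergraphs via the decomposition $\V_d(\U)=\V_d(\U,[\hat{C_n^m}])\cup\V_d(\U,[C_n^m])$ and then grouping by underlying hypergraph, whereas you start from Lemma~\ref{newex} (the last line of Eq.~(\ref{newex2})), which has already done that grouping, and split $\CC(\U)=\CC_{tree}(\U)\cup\CC_{cycle}(\U)$ directly; the two are the same argument read in opposite order.
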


\begin{proof}
By Eq. (\ref{newex2}) and the decomposition (\ref{Udeco}), we have
\begin{align*}
\Tr_d(\U)&=d(m-1)^{|V(\U)|}\sum_{H \in \V_d(\U)} C_H\\
&=d(m-1)^{|V(\U)|} \left( \sum_{H \in \V_d(\U,[\hat{C_n^m}])} C_H +
\sum_{H \in \V_d(\U,[C_n^m])} C_H\right).
\end{align*}
By Corollary \ref{Udeco1}, each of $\V_d(\U,[\hat{C_n^m}])$ and $\V_d(\U,[C_n^m])$ is nonempty if and only if $m \mid d$.
So we assume $m \mid d$ in the following.
By Eq. (\ref{Utree}),
$$\V_d(\U,[\hat{C_n^m}])=\cup_{\hat{\T} \in \CC_{tree}(\U)}\WW_d(\breve{\T}),$$
where $\WW_d(\breve{\T})=\{\breve{\T}(\omg): \omg(\breve{\T})=d/m\}$.
So
$$\sum_{H \in \V_d(\U,[\hat{C_n^m}])} C_H=\sum_{\breve{\T} \in \CC_{tree}(\U)} \sum_{H \in \WW_d(\breve{\T})} C_H=\sum_{\breve{\T} \in \CC_{tree}(\U)} \tr_d(\breve{\T}).$$
By the definition of $\WW_d(\breve{\T})$ and $\tr_d(\breve{\T})$, we have
$$\tr_d(\breve{\T})=\sum_{\omg: \omg(\breve{\T})=d/m} C_{\breve{\T}(\omg)}.$$

Similarly, by Eqs. (\ref{Ucycle}) and (\ref{Ucycle1}), we have
$\V_d(\U,[C_n^m])=\cup_{\G \in \CC_{cycle}(\U)}\WW_d(\G)$
and $\WW_d(\G)=\{\mathcal{G}(\omg): \omg(\mathcal{G})=d/m\}$.
So
$$\sum_{H \in \V_d(\U,[C_n^m])} C_H=\sum_{\G \in \CC_{cycle}(\U)} \sum_{H \in \WW_d(\G)} C_H=\sum_{\G \in \CC_{cycle}(\U)} \tr_d(\G),$$
and $$\tr_d(\G)=\sum_{\omg: \omg(\G)=d/m} C_{\G(\omg)}.$$
The result follows.
\end{proof}

\begin{lem}\label{attStar}
Let $\U$ be an $m$-uniform linear unicyclic hypergraph which contains a cycle $C_n^m$, and let
$e$ be an edge of $C_n^m$ which contains a vertex $v$ of degree $2$ and a vertex $u$ with degree $1$.
Let $\T$ be an $m$-uniform nontrivial hypertree with root $w$.
Then
$$\Tr_d(\U(v)\odot \T(w)) \ge \Tr_d(\U(u)\odot \T(w))$$
with equality if  $d/m$ is an integer and $d/m \ge 2$,
\end{lem}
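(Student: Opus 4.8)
The plan is to mimic the structure of Lemma \ref{Treepert}, using Theorem \ref{traU} and exploiting the asymmetry between the degree-$2$ cycle vertex $v$ and the degree-$1$ cored vertex $u$. Write $\H_1=\U(v)\odot\T(w)$ and $\H_2=\U(u)\odot\T(w)$. If $m\nmid d$ then $\Tr_d(\H_1)=\Tr_d(\H_2)=0$, so assume $m\mid d$. Since $|V(\H_1)|=|V(\H_2)|=|V(\U)|+|V(\T)|-1$, the prefactor $d(m-1)^{|V|}$ in Theorem \ref{traU} is common to both, so it suffices to compare $\sum_{\breve{\T}\in\CC_{tree}(\H_i)}\tr_d(\breve{\T})+\sum_{\G\in\CC_{cycle}(\H_i)}\tr_d(\G)$ for $i=1,2$.

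First I would split the connected sub-hypergraphs of each $\H_i$ according to whether they are hypertrees or contain the cycle $C_n^m$, whether they meet $\T$, whether they meet $\U$, and, for trees meeting both, whether they contain $e$. Those avoiding $\T$ lie entirely in $\U$, and those avoiding $\U$ lie entirely in $\T$; in either case they contribute identically to $\H_1$ and $\H_2$ and cancel in the difference, so only sub-hypergraphs meeting both $\T$ and $\U$ survive. The crux is the following structural fact: because $u$ lies only in $e$ while $v$ lies in $e$ and in a second cycle edge $e'$, every connected sub-hypergraph of $\H_2$ meeting both $\T$ and $\U$ must contain $e$, whereas in $\H_1$ the $\T$-branch may join $\U$ through $e$ or through $e'$.

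Next I would introduce the relocation map $\psi$ that moves the attached $\T$-branch from $u$ to $v$ while fixing the $\U$-part $\H'$ and all weights. I expect $\psi$ to be a bijection from the cycle family $\CC_{cycle}(\H_2;\T)$ onto $\CC_{cycle}(\H_1;\T)$, and from the tree family (sub-hypertrees meeting both $\T$ and $\U$) of $\H_2$ onto the sub-family of such trees of $\H_1$ that still contain $e$. For a matched pair $\G\mapsto\tilde\G=\psi(\G)$ with weights $\omg\mapsto\tilde\omg$, every factor of $C_{\G(\omg)}$ in \eqref{paraT}/\eqref{paraC} coincides except the two vertex factorials at $u$ and $v$: setting $x=\omg(e)$, letting $y$ be the weighted degree of the root of the $\T$-branch, and $z=d_v(\H'(\omg))$, these read $(x-1)!\,(z+y-1)!$ for $\tilde\G$ against $(x+y-1)!\,(z-1)!$ for $\G$. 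Since $e\ni v$ forces $z\ge x$, the rising-factorial inequality $z(z+1)\cdots(z+y-1)\ge x(x+1)\cdots(x+y-1)$ gives $C_{\tilde\G(\tilde\omg)}\ge C_{\G(\omg)}$ term by term; notably, no appeal to Lemma \ref{combineq} or to the symmetric-sum trick is needed here, the monotonicity being forced by the degree asymmetry. For the cycle family the inequality is even strict whenever that family is nonempty, since $v$ lies in two cycle edges and hence $z\ge\omg(e)+\omg(e')>x$.

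Finally I would account for the leftover family of sub-hypertrees of $\H_1$ meeting both $\T$ and $\U$ but avoiding $e$ (joining $\U$ through $e'$); these have no counterpart in $\H_2$ and contribute a nonnegative amount. This family is nonempty exactly when $d/m\ge 2$ — take the branch consisting of $e'$ together with one edge of $\T$ at $v$ and distribute the remaining weight — so its contribution is strictly positive there. Summing the three comparisons yields $\Tr_d(\H_1)\ge\Tr_d(\H_2)$, with strict inequality when $m\mid d$ and $d/m\ge 2$ (this is evidently the intended conclusion, matching Lemma \ref{Treepert}). I expect the main difficulty to lie not in the factorial estimate but in the bookkeeping: checking that the case split is an exact partition, that $\psi$ is genuinely a bijection onto the stated families (so that relocating a branch to or from the degree-$2$ vertex $v$ neither creates nor destroys a cycle and respects the tree/cycle dichotomy), and that all remaining factors of \eqref{paraT}/\eqref{paraC} — including the cycle factor $\Omega_{C_n^m}(\omg^0)$, which depends only on the untouched cycle weights — really coincide across $\psi$.
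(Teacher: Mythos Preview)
Your proposal is correct and follows essentially the same route as the paper's proof: the same decomposition of $\CC(\H_i)$ into sub-hypergraphs missing $\T$, missing $C_n^m$, or meeting both (with the last split by containment of $e$), the same relocation bijection $\psi$, and the same leftover family through the second cycle edge $e'$ at $v$ that forces strictness for $d/m\ge 2$. The only cosmetic difference is that the paper writes the factorial comparison separately in the tree case (formula~\eqref{paraT}) and the cycle case (formula~\eqref{paraC}), with variables $(x,y)=(d_w(\T'(\omg)),\,d_v(\hat\T_1(\omg)))$, whereas you package both cases into the single rising-factorial inequality $(z)_y\ge(x)_y$ coming from $z=d_v(\H'(\omg))\ge\omega(e)=x$; this is a mild streamlining of the same computation, and your observation that $\Omega_{C_n^m}(\omega^0)$ passes through $\psi$ unchanged is exactly what the paper records in~\eqref{Lomg}.
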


\begin{proof}
Let $\H_1:=\U(v)\odot \T(w)$ and $\H_2:=\U(u)\odot \T(w)$.
We have
$$\CC(\H_i)=\CC(\H_i;\T^\times)\cup\CC(\H_i;\T,(C_n^m)^\times)
\cup \CC(\H_i;\T,C_n^m),i=1,2.$$
Furthermore, as all hypergraphs in $\CC(\H_i;\T,\T_1)$ are connected,
$$\CC(\H_1;\T,C_n^m)=\CC(\H_1;\T,C_n^m,e) \cup \CC(\H_1;\T,C_n^m,e^\times),~
\CC(\H_2;\T,C_n^m)=\CC(\H_2;\T,C_n^m,e).$$
It is easily seen that $\CC(\H_1;\T^\times)=\CC(\H_2;\T^\times)$
and $\CC(\H_1;\T,(C_n^m)^\times)=\CC(\H_2;\T,(C_n^m)^\times)$.
So,
\begin{equation}\label{tradif3}\small
\Tr_d(\H_1)-\Tr_d(\H_2)=d(m-1)^n \left(\sum_{\G \in \CC(\H_1;\T,C_n^m,e)}\tr_d(\G)-\sum_{\G \in \CC(\H_2;\T,C_n^m,e)}\tr_d(\G)+\sum_{\G \in \CC(\H_1;\T,C_n^m,e^\times)}\tr_d(\G)\right).
\end{equation}
We will prove that
\begin{equation}\label{ineg3}
\sum_{\G \in \CC(\H_1;\T,C_n^m,e)}\tr_d(\G)\ge \sum_{\G \in \CC(\H_2;\T,C_n^m,e)}\tr_d(\G),
\end{equation}
with strict inequality if $d/m$ is an integer and $d/m \ge 3$.

By Theorem \ref{traU}, we assume that $m \mid d$; otherwise $\tr_d(\G)=0$.
Note that $\CC(\H_2;\T,C_n^m,e)$ or $\CC(\H_1;\T,C_n^m,e)$ is nonempty if and only if $d/m \ge 2$.
So we also assume that $d/m \ge 2$.
For each $\G \in \CC(\H_2;\T,C_n^m,e)$, we can write
$\G =\U'(u)\odot \T'(w)$, where $\U'$ is a sub-hypergraph of $\U$ which contains the edge $e$, and $\T'(w)$ is a rooted sub-hypertree of $\T$.
There is a bijection $\psi$ between $\CC(\H_2;\T,C_n^m,e)$ and $\CC(\H_1;\T,C_n^m,e)$ such that $\psi(\G)=\psi(\U'(u)\odot \T'(w))=\U'(v)\odot \T'(w)=:\tilde{\G}.$
Also, each weight function $\omg: E(\G) \to \Z^+$ is naturally associated with a weight function $\tilde{\omg}: E(\tilde{\G}) \to \Z^+$ such that $\omg|_{E(\U')}=\tilde{\omg}|_{E(\U')}$ and  $\omg|_{E(\T'(w))}=\tilde{\omg}|_{E(\T'(w))}$.

Let $d_{w}(\T'(\omg|_{E(\T')})=x$. We divide the discussions into cases.

Case 1. $\U'$ is a hypertree. Then $\U'$ is obtained from $e$ by attaching some hypertrees to its vertices, especially attaching $\hat{\T}_1$ to $v$.
Let $d_{v}(\hat{\T}_1(\omg|_{E(\hat{\T}_1)})=y$.
If $\hat{\T}_1$ is trivial (or equivalently $y=0$), then $\G$ is isomorphic to $\tilde{\G}$.
So we assume that $y>0$.
By Theorem \ref{traU}, we have
$$ C_{\G(\omg)}=(x+\omg(e)-1)!(y+\omg(e)-1)!f_{\G;u,v}(\omg),$$
$$ C_{\tilde{\G}(\tilde{\omg})}=(x+y+\omg(e)-1)!(\omg(e)-1)!f_{\G;u,v}(\tilde{\omg}),$$
where $f_{\G;u,v}(\tilde{\omg})=f_{\G;u,v}(\omg)$ as defined in (\ref{fomg}).
It is easily seen $C_{\tilde{\G}(\tilde{\omg})}>C_{\G(\omg)}$, and
$\tr_d(\tilde{\G})>\tr_d(\G)$ in this case.
So we have
\begin{equation}\label{ineg4}
\sum_{\tilde{\G} \in \CC(\H_1;\T,C_n^m,e)\cap \CC_{tree}(\H_1)}\tr_d(\tilde{\G})\ge \sum_{\G \in \CC(\H_2;\T,C_n^m,e)\cap\CC_{tree}(\H_2)}\tr_d(\G),
\end{equation}
with strictly inequality if $d/m$ is an integer and $d/m \ge 3$, as in this case $\U'$ can have another edge of $C_n^m$ that contains $v$.

Case 2. $\U'$ contains $C_n^m$. Let $d_v(\U'-\{e\})(\omg|_{E(\U')\backslash\{e\}})=y$, where $\U'-\{e\}$ is obtained from $\U'$ by deleting the edge $e$.
By Theorem \ref{traU}, we have
$$ C_{\G(\omg)}=(x+\omg(e)-1)!(y+\omg(e)-1)!h_{\G;u,v}(\omg),$$
$$ C_{\tilde{\G}(\tilde{\omg})}=(x+y+\omg(e)-1)!(\omg(e)-1)!h_{\G;u,v}(\tilde{\omg}),$$
where $h_{\G;u,v}(\tilde{\omg})=h_{\G;u,v}(\omg)$ defined as follows:
\begin{equation}\label{Lomg}
h_{\G;u,v}(\omg):=2(m-1)^{-|V(\G)|}m^{(m-2)|E(\G)|-1}\prod_{v\in V(\G)\backslash \{v,u\}} (d_{v}(\G(\omega))-1)!\prod_{e\in E(\G)}\frac{\omega(e)^{m-1}}{(\omega(e)!)^{m}}\cdot \Omega_{C_n^m}(\omg|_{E(C_n^m)}).
\end{equation}
It easily seen $C_{\tilde{\G}(\tilde{\omg})}>C_{\G(\omg)}$, and
$\tr_d(\tilde{\G})>\tr_d(\G)$ in this case.
So we have
\begin{equation}\label{ineg5}
\sum_{\tilde{\G} \in \CC(\H_1;\T,C_n^m,e)\cap \CC_{cycle}(\H_1)}\tr_d(\tilde{\G})\ge \sum_{\G \in \CC(\H_2;\T,C_n^m,e)\cap\CC_{cycle}(\H_2)}\tr_d(\G),
\end{equation}
with strictly inequality if $m \mid d$ is an integer and $d/m \ge n+1$.

By Eqs. (\ref{ineg4}) and (\ref{ineg5}), we arrive the inequality (\ref{ineg3}).
As $\CC(\H_1;\T,C_n^m,e^\times)\ne \emptyset $ if $m \mid d$ and $d/m \ge 2$, we get the desired result by Eq. (\ref{tradif3}).
\end{proof}

\section{Maximum Estrada index Of hypertrees with perfect matching}
In this section, we will determine the hypertree(s) with maximum Estrada index among all hypertrees with given number of edges and perfect matchings.
We need the following lemma.

\begin{lem}[\cite{FYa}]\label{edgePer}
Let $e=\{u,v_{1},\ldots,v_{m-1}\}$ be an edge, and let $\mathcal{H}_{1}=e(v_{1},\ldots,v_{p})\odot(\G_{1}(\tilde{v}_{1}),\ldots,\G_{p}(\widetilde{v}_{p}))$, where $1\leq p\leq m-1$.
Let $\H_2(w)$ be a nontrivial $m$-uniform rooted hypergraph. Then
$$\Tr_{d}(\mathcal{H}_{1}(v_{1})\odot\mathcal{H}_{2}(w))\geq \Tr_{d}(\mathcal{H}_{1}(u)\odot\mathcal{H}_{2}(w)),$$
with strict inequality if $m|d$ and $d/m\geq2$, and hence
$$EE(\mathcal{H}_{1}(v_{1})\odot\mathcal{H}_{2}(w))> EE(\mathcal{H}_{1}(u)\odot\mathcal{H}_{2}(w)).$$
\end{lem}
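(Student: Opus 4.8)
The plan is to follow the strategy of Lemma \ref{Treepert}, using the trace expansion of Lemma \ref{newex},
$$\Tr_d(\mathcal{K}) = d(m-1)^{|V(\mathcal{K})|}\sum_{\G \in \CC(\mathcal{K})} \tr_d(\G),$$
to reduce the inequality to a comparison of the weights $C_{\G(\omg)}$ over matched connected sub-hypergraphs. Write $\mathcal{K}_1 := \H_1(v_1)\odot\H_2(w)$ and $\mathcal{K}_2 := \H_1(u)\odot\H_2(w)$; in $\mathcal{K}_1$ the branch $\H_2$ is glued at the very vertex $v_1$ that also carries $\G_1$ and the edge $e$, whereas in $\mathcal{K}_2$ it is glued at the free vertex $u$ of $e$. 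First I split $\CC(\mathcal{K}_i)$ according to whether a member $\G$ contains an edge of $\H_2$ and whether it contains $e$. Sub-hypergraphs containing no edge of $\H_2$ lie entirely inside $\H_1$ and contribute identically to both sides, so they cancel.

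Next I treat the $\G$ that contain an edge of $\H_2$ but not $e$. Since $u$ meets $\H_1$ only through $e$, on the $\mathcal{K}_2$ side such a $\G$ is forced to be a sub-hypergraph of $\H_2$ rooted at $w=u$, while on the $\mathcal{K}_1$ side it is a sub-hypergraph of $\H_2$ rooted at $v_1=w$, possibly glued at $v_1$ to a nonempty sub-hypergraph of $\G_1$. The members with empty $\G_1$-part match the $\mathcal{K}_2$ members bijectively with equal $\tr_d$ (as $\tr_d$ is a root-free isomorphism invariant) and cancel, whereas those with nonempty $\G_1$-part occur only for $\mathcal{K}_1$ and contribute a nonnegative surplus $\sum\tr_d(\G)\ge 0$.

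The main case is when $\G$ contains both $e$ and an edge of $\H_2$. Here relocating $\H_2$ from $u$ to $v_1$ induces, in the same manner as in Lemma \ref{Treepert}, a weight-preserving bijection $\psi:\G\mapsto\tilde{\G}$ together with $\omg\mapsto\tilde{\omg}$. The decisive point is the coalescence factorization of $C_{\G(\omg)}$ at the cut vertices $u$ and $v_1$: writing $x$ for the weighted degree of $w$ inside the $\H_2$-part, $y$ for the weighted degree of $v_1$ inside the $\G_1$-part, and $c=\omg(e)$, the factor attached to these two junction vertices is $(c+x-1)!(c+y-1)!$ for $\G$ and $(c+x+y-1)!(c-1)!$ for $\tilde{\G}$, while all remaining factors (the analogue of Eq. (\ref{fomg}), including any cycle factor coming from cycles inside $\G_1$ or $\H_2$) are identical. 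The elementary inequality $(c+x+y-1)!(c-1)!\ge(c+x-1)!(c+y-1)!$, obtained by comparing two products of $x$ consecutive factors and strict when $x,y>0$, gives $C_{\tilde{\G}(\tilde{\omg})}\ge C_{\G(\omg)}$; summing over $\omg$ yields $\tr_d(\tilde{\G})\ge\tr_d(\G)$. Combining the three cases shows $\Tr_d(\mathcal{K}_1)\ge\Tr_d(\mathcal{K}_2)$.

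For strictness when $m\mid d$ and $d/m\ge 2$ I exhibit an unmatched surplus term: since $\G_1$ and $\H_2$ are nontrivial, there is a connected sub-hypergraph of $\mathcal{K}_1$ formed by gluing an edge of $\H_2$ to an edge of $\G_1$ at $v_1$ (avoiding $e$), carrying a weight of total size $d/m\ge 2$, so its $\tr_d>0$; it has no counterpart in $\mathcal{K}_2$. The Estrada-index inequality then follows from $EE=\sum_d \Tr_d/d!$ together with the strict inequality at $d=2m$. The step I expect to be the main obstacle is justifying the coalescence factorization of $C_{\G(\omg)}$ at the two junction vertices for general, possibly cyclic, branches $\G_1,\H_2$ --- that is, that $C_{\G(\omg)}$ splits as (junction factorials)$\times$(relocation-invariant factor) --- since, unlike in Lemmas \ref{TraTree} and \ref{traU}, no closed formula for $C_{\G(\omg)}$ is available in this generality and one must argue directly from the Euler-rooting and arborescence description, where the root multiplicities at a cut vertex distribute by a multinomial coefficient and $\tau$ factorizes over the branches.
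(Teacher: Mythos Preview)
The paper does not supply its own proof of this lemma: it is quoted from \cite{FYa} and used as a black box. So there is no in-paper argument to compare against, only the ambient method (Lemma~\ref{Treepert}, Lemma~\ref{attStar}) which your outline already follows faithfully.

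Your plan is correct. The decomposition of $\CC(\mathcal{K}_i)$, the handling of the ``no $\H_2$-edge'' and ``$\H_2$-edge but no $e$'' cases, the bijection $\psi$ in the main case, and the two-edge strictness witness are all right. The step you single out as the obstacle --- the cut-vertex factorisation of $C_H$ for possibly cyclic branches --- is genuine but can be discharged directly from Lemma~\ref{TraF}. For any Euler-rooted Veblen hypergraph $H=A\odot_v B$ with $v$ a cut vertex: (i) $r_w(F)=\deg_H(w)/m$ is forced by the Eulerian balance, so $\prod_w d_w^+(F)$ is rooting-independent; (ii) $R(F)$ has $v$ as a cut vertex, hence $\tau(F)=\tau(F|_A)\,\tau(F|_B)$; (iii) Euler rootings of $H$ correspond to pairs $(F_A,F_B)$ together with a $\binom{r_v^A+r_v^B}{r_v^A}$-fold interleaving of the $v$-rooted edges. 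Combining these gives
\[
C_{A\odot_v B}\;=\;(m-1)\,\frac{(r_v^A+r_v^B-1)!}{(r_v^A-1)!\,(r_v^B-1)!}\;C_A\,C_B,
\]
and iterating at $u$ and at $v_1$ produces exactly the ratio $(c+x+y-1)!\,(c-1)!\big/(c+x-1)!\,(c+y-1)!$ you claim, with every other factor manifestly relocation-invariant.

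One refinement: in your main case you should run the bijection over all $H\in\WW_d(\G)$, not only those of the form $\G(\omg)$ with $\omg:E(\G)\to\Z^+$, since for general branches $\G_i,\H_2$ it is not a priori clear that every Veblen hypergraph on $\G$ has edge-multiplicities divisible by $m$. This costs nothing: the relocation $H\mapsto\tilde H$ is a bijection $\WW_d(\G)\to\WW_d(\tilde\G)$ because the degrees at $u$ and $v_1$ in each branch are multiples of $m$ (the branch at a cut vertex of a Veblen hypergraph is again Veblen), and the factorisation above applies to $C_H$ verbatim. With that adjustment your argument is complete.
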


\begin{figure}
\centering
\includegraphics[scale=.9]{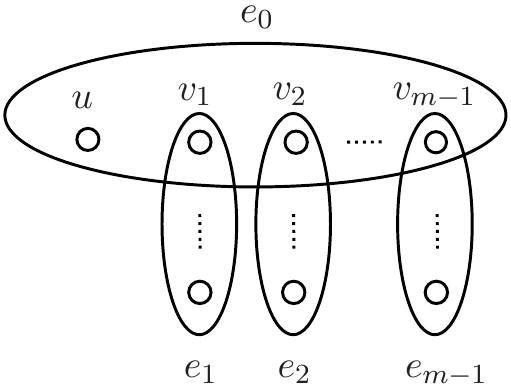}
\caption{An $m$-comb $Comb_u$}\label{comb}
\end{figure}

We first introduce a special $m$-uniform hypertree.
Let $e_0,e_1,\ldots,e_{m-1}$ be $m$ disjoint edges ($m$-sets), where  $e_{0}=\{u,v_{1},\ldots,v_{m-1}\}$, and $u_i \in e_i$ for $i \in [m-1]$.
Denote $Comb_{u}=e_{0}(v_{1},\ldots,v_{m-1})\odot(e_{1}(u_{1}),\ldots,e_{m-1}(u_{m-1}))$,
called an \emph{$m$-comb} with endpoint $u$; see Fig. \ref{comb}.
Let $e$ be an edge ($m$-set) contains a vertex $v$.
Let $Comb_{u_1},\ldots,Comb_{u_t}$ be $t$ pairwise disjoint $m$-combs.
Denote $\T_{m,t}=e(v)\odot(Comb_{u_{1}}(u_{1}),\ldots,Comb_{u_{t}}(u_{t}))$; see Fig. \ref{Tcomb}.
It is easy to see that $\T_{m,t}$ has a unique perfect matching of size $t(m-1)+1$.

\begin{figure}
\centering
\includegraphics[scale=1]{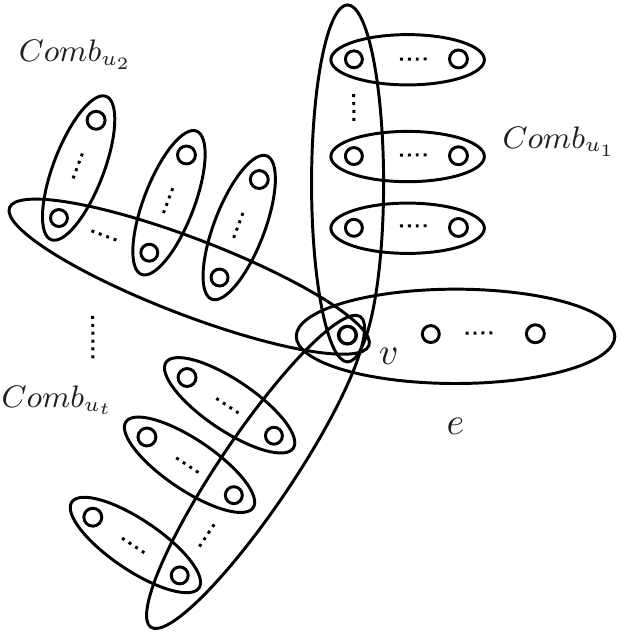}
\caption{The hypergraph $\T_{m,t}$}\label{Tcomb}
\end{figure}

Let $\T$ be an $m$-uniform hypertree of order $n$.
If $\T$ has a perfect matching $M$ of size $k$, then $n=km$, and $|E(\T)|=\frac{n-1}{m-1}=\frac{km-1}{m-1}=k+\frac{k-1}{m-1}$, which implies that
$(k-1) \mid (m-1)$, and $\T$ has $\frac{k-1}{m-1}$ edges outside $M$.

\begin{lem}
Let $\T$ be an $m$-uniform hypertree with perfect matching.
If $\T$ has more than one edge, then
$$\T=\T'(u) \odot Comb_u(u),$$
where $\T'$ is a sub-hypertree of $\T$ with perfect matching;
consequently, $\T$ has a unique perfect matching.
\end{lem}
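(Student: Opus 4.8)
The plan is to peel off a single comb from $\mathcal{T}$ by combining the structure of a perfect matching with the tree structure, and then to deduce uniqueness by induction on the number of edges. First I would record two preliminary facts. Since $\mathcal{T}$ is a hypertree it is linear: if two distinct edges $e, e'$ shared two vertices $a, b$, then $a\, e\, b\, e'\, a$ would be a cycle, contradicting acyclicity. Now fix a perfect matching $M$ of $\mathcal{T}$. Every pendant edge must lie in $M$, since its $m-1$ cored vertices have degree one and can be covered only by that edge; and $M$ cannot contain all edges of $\mathcal{T}$, because a connected hypertree with more than one edge has two edges meeting in a vertex, so at least one non-matching edge exists.

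The heart of the argument is to pass to the reduced hypergraph $\hat{T}$ whose nodes are the $k$ edges of $M$ and whose edges are the non-matching edges of $\mathcal{T}$, each non-matching edge $f$ being recorded as the set of matching edges that cover its $m$ vertices. Linearity forces these $m$ matching edges to be distinct, so $\hat{T}$ is $m$-uniform; connectivity and acyclicity transfer from $\mathcal{T}$ to $\hat{T}$ by lifting walks and cycles, so $\hat{T}$ is itself a hypertree, and it has at least one edge by the existence of a non-matching edge noted above. I would then take a pendant edge of $\hat{T}$ (every hypertree with an edge has one, e.g.\ an edge farthest from a fixed root), let $e_0$ be the corresponding non-matching edge of $\mathcal{T}$, single out $m-1$ of its cored neighbours in $\hat{T}$ as $g_1,\ldots,g_{m-1}$, and call the remaining neighbour $g_0$. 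Each $g_i$ is a matching edge touched only by $e_0$, so its $m-1$ vertices other than the one it shares with $e_0$ are cored in $\mathcal{T}$; hence $g_i =: e_i$ is a pendant edge of $\mathcal{T}$ attached at a vertex $v_i \in e_0$. With $u$ the vertex of $e_0$ lying in $g_0$, this is precisely the comb $Comb_u$ with central edge $e_0 = \{u, v_1, \ldots, v_{m-1}\}$ and teeth $e_1, \ldots, e_{m-1}$.

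Next I would check that this comb meets the rest of $\mathcal{T}$ only at $u$: each $v_i$ lies in no edge other than $e_0$ and $e_i$ (another non-matching edge through $v_i$ would raise the degree of $g_i$ in $\hat{T}$), and every cored vertex of a tooth lies in a single edge. Thus $\mathcal{T} = \mathcal{T}'(u) \odot Comb_u(u)$, where $\mathcal{T}'$ is the sub-hypergraph spanned by the remaining edges; it is connected and acyclic, hence a hypertree. Moreover $M' := M \setminus \{e_1, \ldots, e_{m-1}\}$ is a perfect matching of $\mathcal{T}'$: the teeth cover all comb vertices except $u$, while $u$ is covered by $g_0$, whose vertices all lie in $\mathcal{T}'$. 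This proves the displayed decomposition.

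Finally, uniqueness follows by induction on $|E(\mathcal{T})|$, the base case of a single edge being immediate. Any perfect matching of $\mathcal{T}$ must contain $e_1, \ldots, e_{m-1}$ (forced by their cored vertices), which cover every comb vertex except $u$; it therefore restricts to a perfect matching of $\mathcal{T}'$, unique by the induction hypothesis, so the matching of $\mathcal{T}$ is unique as well. The main obstacle is the middle step: one must arrange that exactly $m-1$ vertices of the chosen edge $e_0$ run into pendant teeth while the last opens onto the rest of the hypertree, and it is the passage to $\hat{T}$ --- together with the correspondence between cored nodes of $\hat{T}$ and pendant edges of $\mathcal{T}$ --- that secures this, whereas a naive deepest edge of $\mathcal{T}$ yields only a pendant edge, not a comb.
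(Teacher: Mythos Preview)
Your argument is correct and complete, but it follows a genuinely different route from the paper's.  The paper proceeds by a longest-path argument: take a longest hyperpath with consecutive edges $e_1,\ldots,e_d$ in $\mathcal{T}$; then $e_d$ is pendant, hence lies in $M$, which forces $e_{d-1}\notin M$; each of the remaining $m-1$ vertices of $e_{d-1}$ is covered by some matching edge, and maximality of the path forces all of these to be pendant and their attaching points to have no further neighbours.  This immediately produces the comb $Comb_u$ with central edge $e_{d-1}$.  By contrast, you construct an auxiliary $m$-uniform hypertree $\hat{T}$ on the matching edges, with the non-matching edges as hyperedges, and read off the comb from a pendant edge of $\hat{T}$.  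The paper's approach is shorter and needs no auxiliary object (nor the verification that $\hat{T}$ is again a hypertree, which you sketch but do not spell out).  Your construction, on the other hand, isolates a structural fact of independent interest --- that contracting each matching edge to a point yields another hypertree --- and makes the correspondence between pendant edges of $\hat{T}$ and removable combs of $\mathcal{T}$ transparent; this could be useful if one wanted to iterate the decomposition or count combs.
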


\begin{proof}
Let $P_d^m$ be a longest path of $\T$ with consecutive edges $e_1,\ldots,e_d$, where $d \ge 2$.
Let $e_{d-1}=\{u,v_1,\ldots, v_{m-1}\}$, where $v_{m-1} \in e_d$.
Clearly, $e_d$ is a pendent edge of $\T$ by the definition of $P_d^m$.
Let $M$ be a perfect matching of $\T$.
Surely, $e_d \in M$, which implies that $v_{m-1}$ is covered by $M$.
So $e_{d-1} \notin M$, and each vertex $v_i$ of $e_{d-1}$ will be covered by some edge $f_i \in M$ for $i \in [m-2]$.
As the maximum length of the paths of $\T$ is $d$, all edges $f_i$ for $i \in [m-2]$
are pendent edges.
Similarly, except $e_{d-1}$ and $f_i$, no edges contain $v_i$ for $i \in [m-2]$, and
 except $e_{d-1}$ and $e_d$, no edges contain $v_{m-1}$.
 So we get a comb $$Comb_u=e_{d-1}(v_1,\ldots,v_{m-2},v_{m-1}) \odot (f_1(v_1),\ldots,f_{m-2}(v_{m-2}), e_d(v_{m-1})),$$
 and $\T=\T'(u) \odot Comb_u(u)$.
As all vertices of $Comb_u(u)$ are covered by the edges of $M$ except $u$,
$\T'$ has a perfect matching.
By induction, $M$ is the unique perfect matching of $\T$.
\end{proof}

\begin{thm}
Let $\T$ be an $m$-uniform hypertree of order $mk$ with perfect matching. Then
$$EE(\T)\leq EE(\T_{m,\frac{k-1}{m-1}}),$$
with equality if and only if $\T=\T_{m,\frac{k-1}{m-1}}$.
\end{thm}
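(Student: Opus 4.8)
The plan is to prove the extremal result by an inductive argument built on the structural lemma just established and the perturbation machinery developed earlier, especially Lemma~\ref{edgePer} and Lemma~\ref{Treepert}. The governing identity is $EE(\mathcal{H})=\sum_{d=0}^\infty \Tr_d(\mathcal{H})/d!$, so to compare Estrada indices it suffices to compare the traces $\Tr_d$ for every $d$ (with at least one strict inequality for some $d$). Since $\Tr_d(\T)=0$ unless $m\mid d$ by Theorem~\ref{TraTree}, the relevant values are $d=km'$, and strictness is available as soon as $d/m\ge 2$. Thus the strategy reduces the analytic comparison of $EE$ to a family of purely combinatorial trace inequalities, and I will aim to show $\Tr_d(\T)\le \Tr_d(\T_{m,(k-1)/(m-1)})$ for all $d$ with strict inequality for some admissible $d$.

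First I would set $t=(k-1)/(m-1)$ and induct on $t$ (equivalently on the number of edges, or on $k$). The base case $t=0$ forces $k=1$, so $\T$ is a single edge and equals $\T_{m,0}$, giving equality trivially. For the inductive step I would invoke the preceding lemma to write $\T=\T'(u)\odot Comb_u(u)$, where $\T'$ is a hypertree with perfect matching having one fewer comb, so that $\T'$ corresponds to parameter $t-1$. The inductive hypothesis then gives that among such hypertrees $\T'$ with perfect matching the maximum Estrada index (equivalently maximal traces) is attained by $\T_{m,t-1}$. The goal of the step is to show that the best way to attach the final comb, and the best $\T'$, together force the configuration $\T_{m,t}$.

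The core of the argument is a two-part optimization. The first part fixes the comb and optimizes its attachment point within the fixed branch: using Lemma~\ref{edgePer}, moving a coalescence root from the central vertex $u$ of an edge to one of its non-central vertices $v_i$ can only increase every trace (strictly once $d/m\ge 2$), so the trace is maximized by attaching all the combs at the single distinguished vertex $v$ of the root edge $e$, exactly as in $\T_{m,t}$. The second part applies the relocation inequality Lemma~\ref{Treepert} (and, where the path-length hypotheses require, the ``symmetric sum'' technique highlighted after its proof) to show that concentrating the combs at a common vertex dominates any spread-out attachment along a longer path, so no hypertree with perfect matching and the same parameters can beat $\T_{m,t}$. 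Combining the inductive hypothesis on $\T'$ with these two monotonicity facts yields $\Tr_d(\T)\le \Tr_d(\T_{m,t})$ for all $d$, with strict inequality for some $d$ with $d/m\ge 2$ whenever $\T\ne\T_{m,t}$, and hence the strict Estrada comparison.

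The main obstacle I anticipate is the bookkeeping that guarantees each perturbation step preserves the property of having a perfect matching while keeping the parameters $k$ and $t$ fixed, so that Lemma~\ref{edgePer} and Lemma~\ref{Treepert} apply to \emph{legal} intermediate hypertrees rather than producing off-class objects. Because $\T$ has a \emph{unique} perfect matching (by the preceding lemma) and the edges outside $M$ number exactly $t=(k-1)/(m-1)$, I must verify that each relocation or re-rooting either keeps the unique perfect matching intact or is applied only to the non-matching edges; tracking which vertices are matched as the attachment point slides is the delicate part. A secondary difficulty is ensuring that the uniqueness characterization of equality propagates cleanly through the induction, i.e.\ that equality in every trace forces isomorphism to $\T_{m,t}$ at each stage, which I would handle by recording exactly when Lemma~\ref{edgePer} and Lemma~\ref{Treepert} give equality (only when the moved branch is trivial) and arguing that any nontrivial deviation from the comb structure triggers a strict inequality at some admissible $d$.
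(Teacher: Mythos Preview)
Your inductive scheme has a genuine gap at the step where you try to pass from the hypothesis on $\T'$ to a conclusion about $\T=\T'(u)\odot Comb_u(u)$. Knowing $\Tr_d(\T')\le \Tr_d(\T_{m,t-1})$ for all $d$ does \emph{not} imply $\Tr_d(\T'(u)\odot Comb_u)\le \Tr_d(\T_{m,t-1}(v)\odot Comb_v)$. By Lemma~\ref{newex} the trace of a coalescence is a sum over connected sub-hypergraphs, and the terms that straddle the cut vertex depend on the local structure of $\T'$ near $u$, not merely on the aggregate quantities $\Tr_d(\T')$. There is no product or convolution formula here that would let a trace-wise inequality on the pieces propagate through $\odot$. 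Lemmas~\ref{edgePer} and~\ref{Treepert} are perturbation statements comparing two configurations of the \emph{same} ambient hypergraph after a relocation; they do not give you a way to ``swap out'' $\T'$ for a different hypertree with larger traces while keeping a comb attached. So the phrase ``combining the inductive hypothesis on $\T'$ with these two monotonicity facts'' hides exactly the step that is missing.

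The paper sidesteps this entirely by arguing directly on a maximizer $\T_0$ rather than by induction. First, Lemma~\ref{edgePer} forces every matching edge of $\T_0$ to be pendent: if some $e\in M$ had two vertices of degree $\ge 2$, one relocation strictly increases $EE$ while preserving the perfect matching. Second, take the vertex $w$ of maximum degree $\Delta$; since all edges of $M$ are pendent, the $\Delta$ edges through $w$ together with the matching edges hanging on their other vertices already form a copy of $\T_{m,\Delta-1}$ inside $\T_0$. If anything further is attached at some non-central vertex $w_1$ of this configuration, then Lemma~\ref{Treepert} (with $v_1=w$, $v_2=w_1$) relocates it to $w$, again strictly increasing $EE$ and preserving the matching, a contradiction. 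Hence $\T_0=\T_{m,\Delta-1}$ and $\Delta-1=(k-1)/(m-1)$. If you want to keep your write-up close to what you have, drop the induction and recast the argument as this two-step analysis of a maximizer; the lemmas you already cite are exactly the ones needed.
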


\begin{proof}
Let $\T_{0}$ be a hypertree with maximum Estrada index among all hypertrees of order $mk$ with perfect matchings.
If $k=1$, then $\T_0$ is an edge, and the result follows clearly.
So we assume that $k>1$.
Let $M$ be the unique perfect matching of $\T_{0}$.
 We assert that if $e\in M$, then $e$ is a pendent edge of $\T_{0}$;
 otherwise, letting $e=\{u,v_1,\ldots,v_{m-1}\}$, then
 $\T_0=\H(u) \odot \T_0'(u)$, where $\H=e(v_1,\ldots,v_p)\odot ({\T}'_1,\ldots,{\T}'_p)$, $1 \le p \le m-1$.
By Lemma \ref{edgePer}, we have
$$EE(\T_{0})<EE(\mathcal{H}(v_{1})\odot\T_0'(u)).$$
As $\mathcal{H}(v_{1})\odot\T'_0(u)$ also has a perfect matching,
we get a contradiction to the definition of $\T_{0}$.

Let $w$ be a vertex of $\T_0$ with maximum degree $\Delta>1$.
Then $\T_0$ contains a hyperstar $S_\Delta^m$ centered at $w$ as sub-hypertree.
Let $e_1,\ldots,e_\Delta$ be edges of $S_\Delta^m$ which share a common vertex $u$.
Suppose that $u$ is covered by $e_1 \in M$.
Then neither of $e_2,\ldots,e_\Delta$ belongs to $M$.
So every vertices of $e_i$ except $u$ are covered by the edges of $M$ for $i=2,\ldots,\Delta$, implying that $\T_0$ contains a comb  $\T_{m,\Delta-1}$.

If $\Delta=2$, then $\T_0=\T_{m,1}$, the result follow.
Assume that $\Delta>2$ and $\T_0 \ne \T_{m,\Delta-1}$.
By the above assertion, all edges of $M$ are pendent, so $\T_0=\T_{m,\Delta-1}(w_1,w_2,\ldots,w_t)\odot ({\T}'_1(w_1),{\T}'_2(w_2),\ldots,{\T}'_p(w_p))$,
where $w_1,\ldots,w_p$ are the vertices of $S_\Delta^m$ except $w$, and $ p \ge 1$.
Let $\tilde{\T}_0=\T_{m,\Delta-1}(w,w_2,\ldots,w_t)\odot ({\T}'_1(w_1),{\T}'_2(w_2),\ldots,{\T}'_p(w_p))$, which also has a perfect matching.
By Lemma \ref{Treepert},
we have
$$\Tr_d(\tilde{\T}_0)\ge \Tr_d(\T_0),$$
with strict inequality if $d/m$ is an integer and $d/m \ge 2$.
So, $$EE(\tilde{\T}_0)> EE(\T_0),$$
a contradiction to the definition of $\T_0$.
So, $\T_0 = \T_{m,\Delta-1}$ and $\Delta=\frac{k-1}{m-1}+1$.
\end{proof}

\section{Maximum Estrada index of linear unicyclic hypergraphs with given girth}
Recall the \emph{girth} of a hypergraph $\H$, denoted by $g(\H)$, is the minimum length of the cycles of $\H$. If $\H$ contains no cycles, then we define $g(\H) =+\infty$.
Let $\mathscr{U}_{z,g}^m$ denote the set of $m$-uniform linear unicyclic hypergraphs with $z$ edges and girth $g$.
In this section we will characterize the unicyclic hypergraphs with maximum Estrada index among all
hypergraphs in  $\mathscr{U}_{z,g}^m$.

\begin{lem}\label{UatStar}
Let $\U$ be $m$-uniform linear unicyclic hypergraph with maximum Estrada index among all hypergraphs in $\mathscr{U}_{z,g}^m$.
Then $\U$ is obtained from $C_g^m$ by attaching some hyperstars to the vertices of $C_g^m$ of degree $2$ with their centers identified with the vertices of $C_g^m$.
\end{lem}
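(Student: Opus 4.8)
The plan is to take a maximizer $\U\in\mathscr{U}_{z,g}^m$ and repeatedly relocate attached hypertrees, each relocation preserving membership in $\mathscr{U}_{z,g}^m$ while strictly increasing the Estrada index unless $\U$ already has the asserted shape. Since $\U$ is linear unicyclic with girth $g$, it contains a unique linear cycle $C_g^m$, so I may regard $\U$ as $C_g^m$ with hypertrees attached at some of its vertices, where the cycle edges are $e_i=\{u_i,u_{i+1},w_{i,1},\dots,w_{i,m-2}\}$, the vertices $u_1,\dots,u_g$ being the degree-$2$ cycle vertices and each $w_{i,j}$ a cored vertex (degree $1$ in $C_g^m$). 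Two facts will be used throughout: every relocation below keeps the edge total equal to $z$ and leaves $C_g^m$ as the only cycle (detaching a hypertree at one vertex and reattaching it at another creates no new cycle and preserves linearity), so the girth stays $g$ and the result stays in $\mathscr{U}_{z,g}^m$; and a relocation giving $\Tr_d(\U')\ge\Tr_d(\U)$ for all $d$, with strict inequality for some $d$, forces $EE(\U')>EE(\U)$ via $EE(\cdot)=\sum_{d\ge0}\Tr_d(\cdot)/d!$ (for instance $d=2m$ satisfies $m\mid d$ and $d/m\ge2$).

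First I would show every attached hypertree is rooted at a degree-$2$ cycle vertex. Suppose a nontrivial hypertree $\T$ hangs at a cored vertex $w_{i,j}$; write $\U=\U_0(w_{i,j})\odot\T(w_{i,j})$, where $\U_0$ is linear unicyclic with cycle $C_g^m$ and $w_{i,j}$ has degree $1$ in $\U_0$. The cycle edge $e_i$ contains both the degree-$1$ vertex $w_{i,j}$ and the degree-$2$ vertex $u_i$, so Lemma~\ref{attStar} applies with $(u,v)=(w_{i,j},u_i)$ and yields $\Tr_d(\U_0(u_i)\odot\T)\ge\Tr_d(\U)$, strictly for $m\mid d$ and $d/m\ge2$. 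Thus relocating $\T$ to $u_i$ strictly increases $EE$, contradicting maximality; hence in $\U$ every attached hypertree is rooted at some $u_i$.

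Next I would show that the hypertree $\T$ hanging at a fixed $u_i$ is a hyperstar centered at $u_i$. If not, some edge of $\T$ does not contain $u_i$; choose an edge $e'\ni u_i$ of $\T$ and a vertex $v'\in e'\setminus\{u_i\}$ from which a nontrivial pendant subtree $\mathcal{B}$ (everything in $\T$ reachable from $v'$ without traversing $e'$) hangs. Write $\U=\H_1(v')\odot\mathcal{B}(v')$, where $\H_1$ is obtained from $\U$ by deleting the edges of $\mathcal{B}$; then $e'$ is the only edge of $\H_1$ at $v'$, while $u_i$ carries the cycle $C_g^m$ together with the remaining edges of $\T$ at $u_i$, a nontrivial rooted hypergraph. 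Applying Lemma~\ref{edgePer} to the edge $e'$ with apex $v'$ and favourable vertex $u_i$ gives $\Tr_d(\H_1(u_i)\odot\mathcal{B})\ge\Tr_d(\U)$, strictly for $m\mid d$ and $d/m\ge2$, so relocating $\mathcal{B}$ to $u_i$ strictly raises $EE$. Since the root edge of $\mathcal{B}$ then contains $u_i$ while no edge newly starts avoiding it, this move strictly decreases the number of edges of $\T$ not containing $u_i$; iterating therefore terminates at a hyperstar centered at $u_i$, and maximality forces $\T$ to already be one. Combining the two steps, a maximizer must be $C_g^m$ with hyperstars attached at its degree-$2$ vertices, their centers identified with those vertices.

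The step I expect to be the main obstacle is the star-forming relocation, and in particular confirming that Lemma~\ref{edgePer} is applicable when the favourable vertex $u_i$ carries the \emph{cyclic} structure $C_g^m$ rather than a hypertree: one needs the form of that lemma in which the rooted hypergraphs attached to $e'$ are arbitrary connected hypergraphs, only the moving part $\H_2=\mathcal{B}$ and the distinguished edge $e'$ being constrained. Taking $\mathcal{B}$ to be the \emph{entire} pendant subtree at $v'$ is what makes $v'$ a genuine bare apex of $e'$ in $\H_1$, avoiding any case analysis over several branches at $v'$. The remaining points---that each relocation preserves the edge total $z$, keeps $C_g^m$ as the unique cycle (so the girth stays $g$) and preserves linearity, and that a trace inequality strict for a single admissible $d$ upgrades to a strict inequality for $EE$---are the routine verifications recorded at the outset.
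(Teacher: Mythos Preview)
Your proof is correct and follows essentially the same two-step strategy as the paper: first apply Lemma~\ref{attStar} to move any hypertree hanging at a cored cycle vertex to an adjacent degree-$2$ cycle vertex, then apply Lemma~\ref{edgePer} to the tree edge $e'\ni u_i$ to relocate the full pendant branch $\mathcal{B}$ from $v'$ to $u_i$, contradicting maximality unless each attached tree is already a hyperstar centered at $u_i$. Your self-identified obstacle is not one: the statement of Lemma~\ref{edgePer} places no hypertree restriction on the attachments $\G_j$ at the vertices $v_1,\dots,v_p$ of the edge, so taking $\G_1$ at $v_1=u_i$ to be the unicyclic piece containing $C_g^m$ (exactly as the paper does with its $\U'$) is already within scope.
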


\begin{proof}
It is known $\U$ is obtained from $C_g^m$ by attaching some hypertrees at the vertices of $C_g^m$.
Let $S=\{v_1,\ldots,v_g\}$ be the set of vertices of $C_g^m$ with degree $2$.
We first assert that all hypertrees are attached to the vertices of $S$.
Otherwise, let $\T$ be a hypertree attached at a cored vertex $w$ of $C_g^m$, where $w \notin S$, and let $e$ be an edge of $C_g^m$ which contains $w$ and a vertex $v \in S$.
Then $\U=\U'(w)\odot \T(w)$, where $\U'$ is a unicylic sub-hypergraph of $\U$.
By Lemma \ref{attStar},
$$\Tr_d(\U) \le \Tr_d(\U'(v)\odot \T(w)),$$
with strict inequality if $m \mid d$ and $d/m\ge 2$.
So $EE(\U)<EE(\U'(v)\odot \T(w))$, a contradiction to the definition of $\U$.

By the above discussion, $\U$ is obtained by attaching some hypertrees to the vertices of $S$.
We next assert all hypertrees are hyperstars centered at the vertices of $S$.
Otherwise, let $\T$ be a hypertree attached at $v \in S$, which  is not a hyperstar centered at $v$.
Then there exists an edge $e$ outside $C_g^m$ which contains the vertex $v$ and a vertex $w$ to which a sub-hypertree $\T'$ of $\T$ is attached.
Letting $e=\{v,w,u_1,\ldots,u_{m-2}\}$, we can write
$\U=e(v,w,u_1,\ldots,u_p)\odot (\U'(v),\T'(w),\hat{\T}_1(u_1),\ldots,\hat{\T}_p(u_p))$,
where $\U'$ is a unicylic sub-hypergraph of $\U$, and $\hat{\T}_i$ is a sub-hypertree of $\T$ for $i=1,\ldots,p$, $0 \le p \le m-2$. (Note that  $\hat{\T}_i(u_i)$ may be trivial.)
Let $\tilde{\U}$ be obtained from $\U$ by relocating $\T'$ from $w$ to $v$.
By Lemma \ref{edgePer}, we have $EE(\U) < EE(\tilde{\U})$; a contradiction.
The result follows.
\end{proof}

We will determine the unique $m$-uniform linear unicyclic hypergraph of girth $3$ with maximum Estrada index in $\mathscr{U}_{Z,3}^m$.
We need some lemmas for preparation.

\begin{lem}\label{Onedge}
Let $e=\{u,v_{1},\ldots,v_{m-1}\}$ be an edge, and let $\mathcal{H}=e(v_{1},\ldots,v_{p})\odot(\T_{1}(v_{1}),\ldots,\T_{p}(v_{p}))$, where $1\leq p\leq m-1$.
Let $\T$ be a nontrivial $m$-uniform hypertree with root $w$.

(1) Let  $\mathcal{H}_{1}=\mathcal{H}(v_{1})\odot \T(w)$ and $\mathcal{H}_{2}=\mathcal{H}(u)\odot \T(w)$.
Then
$$tr_{d}(\H_1)\geq tr_{d}(\H_2),$$
with strict inequality if $m|d$ and $d/m \ge |E(\H_1)|$.

(2) Let $\H_{11}=\H_1(v_1)\odot \tilde{\T}(\tilde{w})$ and $\H_{12}=\H_1(u)\odot \tilde{\T}(\tilde{w})$,
$\H_{21}=\H_2(v_1)\odot \tilde{\T}(\tilde{w})$ and $\H_{12}=\H_2(u)\odot \tilde{\T}(\tilde{w})$.
Then
$$\tr_d(\H_{11})+\tr_d(\H_{12})\ge \tr_d(\H_{21})+\tr_d(\H_{22}),$$
with strict inequality if $m|d$ and $d/m \ge |E(\H_{11})|$.
\end{lem}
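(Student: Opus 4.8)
The plan is to handle both parts uniformly. Since $\H$, $\T$, $\tilde{\T}$ are all hypertrees glued at single (cut) vertices, every hypergraph appearing in the statement ($\H_1,\H_2$ and $\H_{11},\H_{12},\H_{21},\H_{22}$) is itself an $m$-uniform hypertree, so Theorem \ref{TraTree} applies and each $\tr_d(\cdot)$ is a sum over admissible weight functions $\omg$ of the explicit product \eqref{paraT}. All of these hypertrees share the same edge set, namely $E(\H)\cup E(\T)$ in part (1) and $E(\H)\cup E(\T)\cup E(\tilde{\T})$ in part (2); the only difference is the vertex, $v_1$ or $u$, at which $\T$ (and $\tilde{\T}$) is coalesced. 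First I would fix the canonical bijection between weight functions induced by this identification of edges, and then compare $C_{\cdot(\omg)}$ term by term over this bijection, finally summing over $\omg$ with $\omg(\cdot)=d/m$.

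The key structural observation is that relocating $\T$ (and $\tilde{\T}$) between $v_1$ and $u$ changes only the weighted degrees of $v_1$ and $u$: every other vertex keeps its weighted degree, the edge product $\prod_{e}\omg(e)^{m-1}/(\omg(e)!)^m$ is unchanged, and the prefactor $(m-1)^{-|V|}m^{(m-2)|E|}$ depends only on the (common) vertex and edge counts. Hence \eqref{paraT} factors as a common positive quantity $f(\omg)$ times the two factorials $(d_{v_1}-1)!\,(d_u-1)!$, and the entire comparison reduces to comparing these two factorials. I would record the bookkeeping $x:=d_w(\T(\omg|_{E(\T)}))$, $y:=d_{v_1}(\T_1(\omg|_{E(\T_1)}))$, and in part (2) also $\tilde{x}:=d_{\tilde w}(\tilde{\T}(\omg|_{E(\tilde{\T})}))$; because $\T$, $\T_1$ (which is nontrivial as $p\ge1$) and $\tilde{\T}$ are nontrivial and $\omg$ is positive on all edges, one has $x,y,\tilde{x}\ge1$ for every admissible $\omg$.

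For part (1), writing $c:=\omg(e)$, the pairs of weighted degrees at $(v_1,u)$ are $(c+x+y,\,c)$ for $\H_1$ and $(c+x,\,c+y)$ for $\H_2$, so it suffices to establish $(c+x+y-1)!\,(c-1)!\ge(c+x-1)!\,(c+y-1)!$. Dividing by $(c-1)!\,(c+x-1)!$ turns this into $(c+x)(c+x+1)\cdots(c+x+y-1)\ge c(c+1)\cdots(c+y-1)$, a factorwise comparison that is strict once $x,y\ge1$. Summing over $\omg$ gives $\tr_d(\H_1)\ge\tr_d(\H_2)$, with strict inequality exactly when an admissible $\omg$ exists, i.e.\ $m\mid d$ and $d/m\ge|E(\H_1)|$; otherwise both sides vanish and the inequality is the trivial $0\ge0$.

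For part (2) the four degree pairs at $(v_1,u)$ are $(c+y+x+\tilde{x},\,c)$, $(c+y+x,\,c+\tilde{x})$, $(c+y+\tilde{x},\,c+x)$ and $(c+y,\,c+x+\tilde{x})$ for $\H_{11},\H_{12},\H_{21},\H_{22}$ respectively, each pair summing to $2c+x+y+\tilde{x}$. The required term-by-term inequality $C_{\H_{11}(\omg)}+C_{\H_{12}(\omg)}>C_{\H_{21}(\omg)}+C_{\H_{22}(\omg)}$ is then precisely Lemma \ref{combineq} under the substitution $X=y$, $Y=x$, $A=c+\tilde{x}-1$, $B=c-1$; since $X=y$ and $Y=x$ are positive and $A,B\ge0$, Lemma \ref{combineq} delivers strict inequality, and summation over $\omg$ yields the claim with strictness whenever $m\mid d$ and $d/m\ge|E(\H_{11})|$. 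I expect the main obstacle to be exactly this reduction: one must verify carefully that relocation perturbs only the degrees of $v_1$ and $u$ so that $f(\omg)$ genuinely cancels across all the hypergraphs compared, and then spot the correct substitution converting the four-factorial expression into Lemma \ref{combineq}. Once the reduction is in place, the remaining estimates are routine.
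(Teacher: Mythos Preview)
Your proposal is correct and follows essentially the same route as the paper's proof: set up the natural bijection between weight functions, invoke Theorem~\ref{TraTree} so that $C_{\cdot(\omg)}$ factors as a common positive term times $(d_{v_1}-1)!\,(d_u-1)!$, compare the two factorials directly in part~(1), and feed the four factorials into Lemma~\ref{combineq} in part~(2). Your explicit substitution $X=y$, $Y=x$, $A=c+\tilde{x}-1$, $B=c-1$ makes the application of Lemma~\ref{combineq} slightly more transparent than in the paper, but the argument is otherwise identical.
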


\begin{proof}
(1) By Theorem \ref{TraTree}, if $m \mid d$, then
$$\tr_d(\H_i)=\sum_{\omg:\omg(\H_i)=d/m}C_{\H_i(\omg)}, i=1,2;$$
otherwise, $\tr_d(\H_i)=0$.
So we assume that $m \mid d$ in the following.
Note that each weight function $\omg: E(\H_1) \to \Z^+$ with $\omg(\H_1)=d/m$ induces a weight function
$\tilde{\omg}: E(\H_2) \to \Z^+$ such that $\tilde{\omg}|_{E(\H)}= \omg|_{E(\H)}$ and $\tilde{\omg}|_{E(\T(w))}= \omg|_{E(\T(w))}$.
Let $d_{v_1}(\T(\omg|_{E(\T_1)}))=x$, $d_w(\T(\omg|_{E(\T(w))}))=y$.
By Theorem \ref{TraTree},
$$C_{\H_1(\omg)}=(x+y+\omg(e)-1)!(\omg(e)-1)!f_{\H_1;v_1,u}(\omg),$$
$$C_{\H_2(\tilde{\omg})}=(x+\omg(e)-1)!(y+\omg(e)-1)!f_{\H_2;v_1,u}(\tilde{\omg}),$$
where $f_{\H_1;v_1,u}(\omg)=f_{\H_2;v_1,u}(\tilde{\omg})$ as defined in (\ref{fomg}).
Surely, $C_{\H_1(\omg)}>C_{\H_2(\tilde{\omg})}$ if $d/m \ge |E(\H_1)|$ as in this case $\H_i(\omg)$ exists for $i=1,2$.

(2) As discussed in (1), we assume that $m \mid d$.
Each weight function $\omg_{11}: E(\H_{11}) \to \Z^+$ with $\omg(\H_{11})=d/m$ induces a weight function $\omg_{ij}: E(\H_{ij}) \to \Z^+$ such that $\omg_{ij}|_{E(\H)}= \omg|_{E(\H)}$ and $\omg_{ij}|_{E(\T(w))}= \omg|_{E(\T(w))}$, and $\omg_{ij}|_{E(\T'(w'))}= \omg|_{E(\T'(w'))}$ for
$i,j=1,2$.
Let $d_{v_1}(\T(\omg|_{E(\T_1)}))=x$, $d_w(\T(\omg|_{E(\T(w))}))=y$, and $d_{\tilde{w}}(\T(\omg|_{E(\tilde{\T}(\tilde{w}))}))=z$.
We have
$$C_{\H_{11}(\omg_{11})}=(x+y+z+\omg_{11}(e)-1)!(\omg_{11}(e)-1)!f_{\H_{11};v_1,u}(\omg_{11}),$$
$$C_{\H_{12}(\omg_{12})}=(x+y+\omg_{11}(e)-1)!(z+\omg_{11}(e)-1)!f_{\H_{12};v_1,u}(\omg_{12}),$$
$$C_{\H_{21}(\omg_{21})}=(x+z+\omg_{11}(e)-1)!(y+\omg_{11}(e)-1)!f_{\H_{21};v_1,u}(\omg_{21}),$$
$$C_{\H_{22}(\omg_{22})}=(x+\omg_{11}(e)-1)!(y+z+\omg_{11}(e)-1)!f_{\H_{22};v_1,u}(\omg_{22}),$$
where $f_{\H_{11};v_1,u}(\omg_{11})=f_{\H_{21};v_1,u}(\omg_{21})=f_{\H_{12};v_1,u}(\omg_{12})
=f_{\H_{22};v_1,u}(\omg_{22})$ as defined in (\ref{fomg}).
By Lemma \ref{combineq},
$$C_{\H_{11}(\omg_{11})}+C_{\H_{12}(\omg_{21})}>C_{\H_{21}(\omg_{12})}+C_{\H_{22}(\omg_{22})},$$
which yields the result by Theorem \ref{TraTree}.
\end{proof}

\begin{figure}[h]
\centering
\includegraphics[scale=.9]{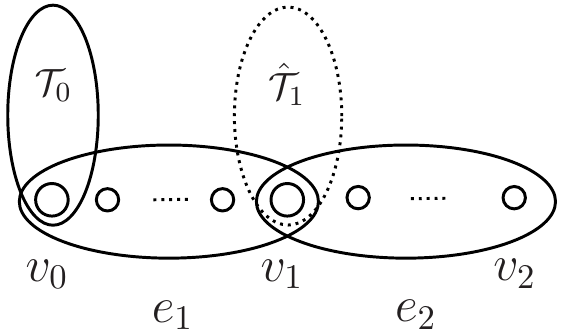}
\caption{The hypergraph $\H$ in Lemma \ref{Treepert2}}\label{Hcoalep2}
\end{figure}

\begin{lem}\label{Treepert2}
Let $P_2$ be a path on vertices $v_0,v_1,v_2$ with edges $\{v_{i-1},v_{i}\}$ for $i=1,2$.
Let $P_2^m$ be the power of $P_2$ with edges $e_i=\{v_{i-1},v_i, u_{i,1},\ldots,u_{i,m-2}\}$ for $i=1,2$.
Let $\H=P_2^m(v_0,v_1) \odot ({\T}_0(v_0),\hat{\T}_1(v_1))$,
where ${\T}_0$ is a nontrivial $m$-uniform hypertree, and $\hat{\T}_1$ is a $m$-uniform hypertree allowed to be trivial with only one vertex $v_{1}$; see Fig. \ref{Hcoalep2}.
Let $\T(w)$ be a $m$-uniform nontrivial hypertree with root $w$.
Then
$$
\tr_d(\H(v_0)\odot\T(w)) \ge \tr_d(\H(v_2)\odot\T(w)),$$
with strict inequality if $d/m$ is an integer and $d/m \ge 4$.
\end{lem}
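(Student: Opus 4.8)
The plan is to adapt the ``symmetric sum'' device of Lemma \ref{Treepert} to the single weighted hypertree computed by $\tr_d$. Set $\H_1:=\H(v_0)\odot\T(w)$ and $\H_2:=\H(v_2)\odot\T(w)$. These two $m$-uniform hypertrees have the same numbers of vertices and edges, and there is a canonical bijection of their edge sets (fixing the edges of $\T_0,\hat{\T}_1,e_1,e_2$ and matching the edges of $\T$ to themselves), which identifies each weight function $\omega\colon E(\H_1)\to\Z^+$ with a weight function on $E(\H_2)$. By Theorem \ref{TraTree}, $\tr_d(\H_i)=0$ unless $m\mid d$; so I assume $m\mid d$ and work throughout with $\tr_d(\H_i)=\sum_{\omega:\,\omega(\H_i)=d/m}C_{\H_i(\omega)}$.

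First I would locate precisely where $C_{\H_1(\omega)}$ and $C_{\H_2(\omega)}$ differ. Since $\H_1$ and $\H_2$ differ only in whether $\T$ is glued at $v_0$ or at $v_2$, the only vertices whose weighted degree changes are $v_0$ and $v_2$. Writing $x:=d_{v_0}(\T_0(\omega|_{E(\T_0)}))$, $y:=d_w(\T(\omega|_{E(\T)}))$, $p:=\omega(e_1)$ and $q:=\omega(e_2)$ (all at least $1$, as $\T_0$ and $\T$ are nontrivial), one has $d_{v_0}(\H_1(\omega))=x+p+y$, $d_{v_2}(\H_1(\omega))=q$, whereas $d_{v_0}(\H_2(\omega))=x+p$, $d_{v_2}(\H_2(\omega))=q+y$. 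Hence, in the notation of Eq. (\ref{fomg}),
\[
C_{\H_1(\omega)}=(x+p+y-1)!\,(q-1)!\,f(\omega),\qquad C_{\H_2(\omega)}=(x+p-1)!\,(q+y-1)!\,f(\omega),
\]
where $f(\omega):=f_{\H_1;v_0,v_2}(\omega)=f_{\H_2;v_0,v_2}(\omega)$ gathers all remaining positive factors; the two agree because every vertex other than $v_0,v_2$ has the same degree in $\H_1$ and $\H_2$ and the edge weights coincide.

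The term-by-term bound $C_{\H_1(\omega)}\ge C_{\H_2(\omega)}$ reduces to $x+p\ge q$, which can fail, so I would invoke the symmetric sum as in Eq. (\ref{symsum}). Let $\omega'$ be obtained from $\omega$ by interchanging the weights of $e_1$ and $e_2$; this is an involution on the valid weight functions that leaves $x,y$ and the weight of $\hat{\T}_1$ at $v_1$ unchanged. Crucially, $f$ is invariant under this swap: $d_{v_1}$ equals $p+q$ plus the (fixed) $\hat{\T}_1$-contribution and so is symmetric in $p,q$, the cored vertices of $e_1$ and $e_2$ merely exchange their degrees, and $\prod_{e}\omega(e)^{m-1}/(\omega(e)!)^m$ is symmetric in $p,q$. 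Averaging over the involution then gives
\[
\tr_d(\H_1)-\tr_d(\H_2)=\tfrac12\sum_{\omega}f(\omega)\big[(x+p+y-1)!(q-1)!+(x+q+y-1)!(p-1)!-(x+p-1)!(q+y-1)!-(x+q-1)!(p+y-1)!\big],
\]
and each bracket is strictly positive by Lemma \ref{combineq} applied with its parameters $(x,y,a,b)$ equal to $(x,y,p-1,q-1)$.

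Since $f(\omega)>0$, the difference is nonnegative, and strictly positive as soon as a single valid weight function exists, that is, whenever $\WW_d(\H_1)\neq\emptyset$; because $|E(\H_1)|\ge 4$ this amounts to $m\mid d$ and $d/m\ge|E(\H_1)|$, which in the minimal case is $d/m\ge 4$. I expect the only real obstacle to be the bookkeeping of the second and third steps: confirming that $v_0$ and $v_2$ are the sole vertices whose factorial factors change and that $f$ is genuinely unaffected by exchanging $\omega(e_1)$ and $\omega(e_2)$, so that Lemma \ref{combineq} applies with the clean parameters above and the method of Lemma \ref{Treepert} carries over essentially verbatim.
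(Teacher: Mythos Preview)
Your argument is correct and essentially identical to the paper's own proof: both isolate the $(d_{v_0}-1)!(d_{v_2}-1)!$ factors, pair each weight function with its $e_1\leftrightarrow e_2$ swap, verify that the residual factor $f_{\H_i;v_0,v_2}$ is unchanged by the swap, and apply Lemma~\ref{combineq} with $(x,y,a,b)=(x,y,\omega(e_1)-1,\omega(e_2)-1)$. Your treatment of the strict-inequality threshold (namely $d/m\ge |E(\H_1)|$, with $4$ the minimal possible value) is in fact slightly more careful than the paper's.
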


\begin{proof}
Let $\H_1:=\H(v_0)\odot\T(w)$ and $\H_2:=\H(v_2)\odot\T(w)$.
By Lemma \ref{TraTree}, we assume that $m \mid d$; otherwise $\tr_d(\H_i)=0$.
Each weight function $\omg: E(\H_2) \to \mathbb{Z}^+$ with $\omg(\H_2)=d/m$ induces a weight function $\tilde{\omg}: E(\H_1) \to \mathbb{Z}^+$ such that
$\tilde{\omg}|_{E(\H)}=\omg|_{E(\H)}$ and  $\tilde{\omg}|_{E(\T(w))}=\omg|_{E(\T(w))}$.
Let $\omg'$ (respectively, $\tilde{\omg}'$) be obtained from $\omg$ (respectively, $\tilde{\omg}$) only by swapping the weight of $e_1$ and the weight of $e_2$.
Let $d_{v_0}(\T_0(\omg|_{E(\T_0)})=x$ and $d_{w}(\T(\omg|_{E(\T)})=y$.
By Theorem \ref{TraTree}, we have
$$ C_{\H_2(\omg)}=(x+\omg(e_1)-1)!(y+\omg(e_2)-1)!f_{\H_2;v_0,v_2}(\omg),$$
$$ C_{\H_2(\omg')}=(x+\omg(e_2)-1)!(y+\omg(e_1)-1)!f_{\H_2;v_0,v_2}(\omg'),$$
$$ C_{\H_1(\tilde{\omg})}=(x+y+\omg(e_1)-1)!(\omg(e_2)-1)!f_{\H_1;v_0,v_2}(\tilde{\omg}),$$
$$ C_{\H_1(\tilde{\omg}')}=(x+y+\omg(e_2)-1)!(y+\omg(e_1)-1)!f_{\H_1;v_0,v_2}(\tilde{\omg}'),$$
where $f_{\H_1;v_0,v_2}(\omg)=f_{\H_1;v_0,v_2}(\omg')=f_{\H_1;v_0,v_2}(\tilde{\omg})=f_{\H_1;v_0,v_2}(\tilde{\omg}')$ as defined in Eq. (\ref{fomg}).
By Lemma \ref{combineq},
$$C_{\H_1(\tilde{\omg})}+C_{\H_1(\tilde{\omg'})}>C_{\H_2(\omg)}+C_{\H_2(\omg')}.$$
So
\begin{align*}
\tr_d(\H_1)-\tr_d(\H_2)&=
\sum_{\omg: \omg(\H_2)=d/m} \left(C_{\H_1(\tilde{\omg})}-C_{\H_2(\omg)}\right)\\
&=
\frac{1}{2}\sum_{\omg: \omg(\H_2)=d/m} \left((C_{\H_1(\tilde{\omg})}+C_{\H_1(\tilde{\omg}')})-(C_{\H_2(\omg)}+C_{\H_2(\omg')})\right)\\
&>0.\end{align*}
The result follows.
\end{proof}

\begin{lem}\label{cycleper}
Let $C_3^m$ be the $m$-th power of $C_3$  with edges $e_i \supseteq \{v_i,v_{i+1}\}$ for $i=1,2,3$, where $v_4=v_1$.
Let $\H=C_3^m(v_1,v_3)\odot (\T_1(v_1),\hat{\T}_3(v_3))$, where $\T_1$ is nontrivial and $\hat{\T}_3$ may be trivial.
Let $\T_2$ be a nontrivial $m$-uniform hypertree with root $w$.
Then
$$\tr_d(\H(v_1)\odot \T_2(w)) \ge \tr_d(\H(v_2)\odot \T_2(w)),$$
with strict inequality if $m|d$ and $d/m \geq |E(\H(v_1)\odot \T_2(w))|.$
\end{lem}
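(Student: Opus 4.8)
The plan is to use the ``symmetric sum'' method of Lemma \ref{Treepert2}, transported to the $3$-cycle. Set $\H_1:=\H(v_1)\odot\T_2(w)$ and $\H_2:=\H(v_2)\odot\T_2(w)$; both contain $C_3^m$, so by Theorem \ref{traU} it suffices to treat the case $m\mid d$ (otherwise both sides are $0$) and to compare
\[\tr_d(\H_i)=\sum_{\omg:\,\omg(\H_i)=d/m}C_{\H_i(\omg)},\qquad i=1,2.\]
Since $\H_1$ and $\H_2$ carry the same edges --- the cycle together with $\T_1,\hat{\T}_3,\T_2$, only the vertex to which $\T_2$ is glued differing --- a weight on one is identified with a weight on the other; write $a=\omg(e_1)$, $b=\omg(e_2)$, $c=\omg(e_3)$, and let $x,y,z$ denote the weighted degrees contributed to $v_1,w,v_3$ by the edges of $\T_1,\T_2,\hat{\T}_3$ respectively (so $z=0$ when $\hat{\T}_3$ is trivial).

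The crucial point is that relocating $\T_2$ from $v_1$ to $v_2$ alters only the weighted degrees of $v_1$ and $v_2$: in $\H_1$ they are $a+c+x+y$ and $a+b$, in $\H_2$ they are $a+c+x$ and $a+b+y$, while the degree of $v_3$ (equal to $b+c+z$), of every internal vertex of the cycle edges, and of every vertex of the attached hypertrees is the same in $\H_1$ and $\H_2$. Writing $C_{\H_i(\omg)}=\Phi(\omg)\,D_i(\omg)$, where $D_i(\omg)$ gathers the two factors $(d_{v_1}-1)!\,(d_{v_2}-1)!$ of Eq. (\ref{paraC}) and $\Phi(\omg)$ is the common remainder (the same for $i=1,2$), I get $D_1(\omg)=(a+c+x+y-1)!\,(a+b-1)!$ and $D_2(\omg)=(a+c+x-1)!\,(a+b+y-1)!$.

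Now introduce the involution $\omg\mapsto\omg'$ that interchanges the weights on $e_2$ and $e_3$. The reflection of $C_3$ fixing $v_3$ and swapping $v_1\leftrightarrow v_2$ fixes $e_1$ and swaps $e_2,e_3$, hence is an automorphism of the weighted cycle sending $C_3^m(a,b,c)$ to $C_3^m(a,c,b)$; since $\Omega_{C_3^m}$ of Eq. (\ref{fonC}) is an invariant of the weighted cycle, it is unchanged by $\omg\mapsto\omg'$, and the remaining ingredients of $\Phi$ --- the product $\prod_e\omg(e)^{m-1}/(\omg(e)!)^m$, the factor $(b+c+z-1)!$ at $v_3$, and the cycle-internal factors $((b-1)!)^{m-2}((c-1)!)^{m-2}$ --- are each symmetric in $b,c$. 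Thus $\Phi(\omg')=\Phi(\omg)$, and reindexing the two sums by this bijection yields
\[\tr_d(\H_1)-\tr_d(\H_2)=\tfrac{1}{2}\sum_{\omg}\Phi(\omg)\Big[\big(D_1(\omg)+D_1(\omg')\big)-\big(D_2(\omg)+D_2(\omg')\big)\Big].\]

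Since $D_1(\omg')=(a+b+x+y-1)!\,(a+c-1)!$ and $D_2(\omg')=(a+b+x-1)!\,(a+c+y-1)!$, applying Lemma \ref{combineq} with its four parameters taken to be $x$, $y$, $a+c-1$ and $a+b-1$ respectively gives exactly $D_1(\omg)+D_1(\omg')>D_2(\omg)+D_2(\omg')$; here the first two parameters are positive because $\T_1,\T_2$ are nontrivial (so $x,y\ge1$) and the last two are nonnegative because $a,b,c\ge1$. As $\Phi(\omg)>0$, every summand above is strictly positive, whence $\tr_d(\H_1)\ge\tr_d(\H_2)$, with strict inequality as soon as the index set is nonempty, i.e. when $m\mid d$ and $d/m\ge|E(\H_1)|=|E(\H(v_1)\odot\T_2(w))|$. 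I expect the one genuinely delicate step to be the identity $\Phi(\omg')=\Phi(\omg)$ --- specifically the invariance of $\Omega_{C_3^m}$ under interchanging $b$ and $c$, which is best justified through the automorphism of the weighted $3$-cycle rather than directly from the non-manifestly-symmetric formula (\ref{fonC}); matching the factorials to Lemma \ref{combineq} is then mechanical.
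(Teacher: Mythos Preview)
Your proposal is correct and follows essentially the same route as the paper: both set up the bijection $\omg\leftrightarrow\tilde{\omg}$, introduce the involution swapping the weights of $e_2$ and $e_3$, isolate the two factorial factors at $v_1,v_2$, and reduce the symmetric sum to Lemma~\ref{combineq} with parameters $x$, $y$, $\omega_1+\omega_3-1$, $\omega_1+\omega_2-1$, the common factor $\Phi$ (the paper's $h_{\H_i;v_1,v_2}$) being unchanged because $\Omega_{C_3^m}(\omg|_{E(C_3^m)})=\Omega_{C_3^m}(\omg'|_{E(C_3^m)})$. Your justification of this last identity via the reflection automorphism of $C_3^m$ is a bit cleaner than the paper's ``by verifying'', but otherwise the arguments coincide.
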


\begin{proof}
Let $\H_1=\H(v_1)\odot \T_2(w)$ and $\H_2=\H(v_2)\odot \T_2(w)$.
By Theorem \ref{traU}, we assume $m \mid d$; otherwise $\tr_d(\H_i)=0$ for $i=1,2$.
Each weight function $\omg: E(\H_1)\to \Z^+$ induces a weight function $\tilde{\omg}: E(\H_2)\to \Z^+$ such that $\tilde{\omg}|_{E(\H)}=\omg|_{E(\H)}$ and $\tilde{\omg}|_{E(\T_2(w))}=\omg|_{E(\T_2(w))}$.
Let $\omg'$ (respectively  $\tilde{\omg}'$) be obtained from $\omg$ (respectively  $\tilde{\omg}$) by swapping the weight of $e_2$ and the weight of $e_3$.
Let $d_{v_1}(\T_1(\omg_{E(\T_1)}))=x$, $d_{w}(\T_2(\omg_{E(\T_2)}))=y$, and let $\omg_i:=\omg(e_i)$ for $i=1,2,3$.
By Theorem \ref{traU}, we get
$$C_{\mathcal{H}_{1}(\omg)}=(x+y+\omega_{1}+\omega_{3}-1)!(\omega_{1}+\omega_{2}-1)!h_{\H_1;v_1,v_2}(\omega),$$
$$C_{\mathcal{H}_{1}(\omg')}=(x+y+\omega_{1}+\omega_{2}-1)!(\omega_{1}+\omega_{3}-1)!h_{\H_1;v_1,v_2}(\omega'),$$
$$C_{\mathcal{H}_{2}(\tilde{\omg})}=(x+\omega_{1}+\omega_{3}-1)!(y+\omega_{1}+\omega_{2}-1)!h_{\H_2;v_1,v_2}(\tilde{\omg}),$$
$$C_{\mathcal{H}_{2}(\tilde{\omg}')}=(x+\omega_{1}+\omega_{2}-1)!(y+\omega_{1}+\omega_{3}-1)!h_{\H_2;v_1,v_2}(\tilde{\omg}'),$$
where $h_{\H_1;v_1,v_2}(\omega)=h_{\H_1;v_1,v_2}(\omega')=h_{\H_2;v_1,v_2}(\tilde{\omg})=h_{\H_2;v_1,v_2}(\tilde{\omg}')$ as defined in (\ref{Lomg}) by verifying $\Omega_{C_3^m}(\omg|_{E(C_3^m)})=\Omega_{C_3^m}(\omg'|_{E(C_3^m)})$.
By Lemma \ref{combineq}, we have
$$C_{\mathcal{H}_{1}(\omg)}+C_{\mathcal{H}_{1}(\omg')}>C_{\mathcal{H}_{2}(\tilde{\omg})}+C_{\mathcal{H}_{2}(\tilde{\omg}')}.$$
So
\begin{align*}
\tr_d(\H_1)-\tr_d(\H_2)&=
\sum_{\omg: \omg(G)=d/m} \left(C_{\H_1(\omg)}-C_{\H_2(\tilde{\omg})}\right)\\
&=
\frac{1}{2}\sum_{\omg: \omg(G)=d/m}
\left(
(C_{\H_1(\omg)}+C_{\H_1(\omg')})
-(C_{\H_2(\tilde{\omg})}+C_{\H_2(\tilde{\omg}')})
\right)\\
&>0.\end{align*}
The result follows.
\end{proof}

\begin{cor}\label{cycleTra}
Let $C_3^m$ be the $m$-th power of $C_3$  with edges $e_i \supseteq \{v_i,v_{i+1}\}$ for $i=1,2,3$, where $v_4=v_1$.
Let $\H=C_3^m(v_1,v_3)\odot (\T_1(v_1),\hat{\T}_3(v_3))$, where $\T_1$ is nontrivial and $\hat{\T}_3$ may be trivial.
Let $\T_2$ be a nontrivial $m$-uniform hypertree with root $w$.
Then
$$\Tr_d(\H(v_1)\odot \T_2(w)) \ge \Tr_d(\H(v_2)\odot \T_2(w)),$$
with strict inequality if $m|d$ and $d/m \geq 2.$
\end{cor}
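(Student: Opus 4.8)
The plan is to lift the single-hypergraph inequality of Lemma \ref{cycleper} to the full trace by summing over connected sub-hypergraphs via Lemma \ref{newex}. Writing $\H_1=\H(v_1)\odot\T_2(w)$ and $\H_2=\H(v_2)\odot\T_2(w)$, these have the same vertex set, so
\[
\Tr_d(\H_1)-\Tr_d(\H_2)=d(m-1)^{|V(\H_1)|}\left(\sum_{\G\in\CC(\H_1)}\tr_d(\G)-\sum_{\G\in\CC(\H_2)}\tr_d(\G)\right).
\]
First I would discard the sub-hypergraphs that use no edge of $\T_2$: these are precisely the connected sub-hypergraphs of the common part $\H$, so they occur identically on both sides and cancel. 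Every remaining $\G$ is connected and meets $\T_2$, hence contains the attaching vertex and factors as $\G=A(v_i)\odot B(w)$, where $A$ is a connected sub-hypergraph of $\H$ through $v_i$ and $B$ is a nontrivial rooted sub-hypertree of $\T_2$. Grouping by $B$ reduces the task to proving, for each such $B$,
\[
\sum_{A\ni v_1}\tr_d\big(A(v_1)\odot B\big)\ \ge\ \sum_{A\ni v_2}\tr_d\big(A(v_2)\odot B\big).
\]

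Next I would split the inner sums by whether $A$ contains the cycle $C_3^m$. If $A\supseteq C_3^m$ then $A$ contains $v_1,v_2,v_3$ at once, so the same $A$ indexes a term on both sides; pairing $A(v_1)\odot B$ with $A(v_2)\odot B$, Lemma \ref{cycleper} gives $\tr_d(A(v_1)\odot B)\ge\tr_d(A(v_2)\odot B)$ whenever the $\T_1$-branch of $A$ is nontrivial, while if that branch is trivial the two hypergraphs are isomorphic under the reflection of $C_3^m$ fixing $v_3$ and interchanging $v_1,v_2$, so the terms are equal. Hence the cycle-containing part contributes a nonnegative amount, strictly positive once $m\mid d$ and $d/m$ is large enough for such a configuration to carry a nonzero weight.

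The tree part, where $A$ is a sub-hypertree of $\H$, is the step I expect to be the main obstacle: the index sets $\{A\ni v_1\}$ and $\{A\ni v_2\}$ genuinely differ, and the naive same-$A$ pairing can \emph{fail}, since when $A$ is the path through $v_1,v_2,v_3$ the central vertex $v_2$ may give a strictly larger term than the end $v_1$. I would instead build a trace-nondecreasing injection of the $v_2$-trees into the $v_1$-trees, organised by which of the cycle edges $e_1,e_2$ at $v_2$ lie in $A$. When $A$ uses only $e_1$, I keep $A$ and relocate $B$ across $e_1$ to the loaded endpoint $v_1$, which does not decrease $\tr_d$ by Lemma \ref{Onedge}(1). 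When $A$ uses $e_2$ but not $e_1$, I reroute $A$ isomorphically by replacing $e_2$ with $e_3$ and $v_2$ with $v_1$ (if $e_3\notin A$), or relocate $B$ along the path $v_2,v_3,v_1$ by Lemma \ref{Treepert2} (if $e_3\in A$); either move preserves or increases $\tr_d$. When $A$ uses both $e_1$ and $e_2$, I replace $e_2$ by $e_3$ and pull the $\T_1$-branch onto the resulting center, combining Lemma \ref{Onedge}(1) with the ensuing isomorphism to land on a $v_1$-tree of no smaller $\tr_d$. The delicate points are to perform this case split so that the map is genuinely injective (the images being separated by which of $e_1,e_2,e_3$ they contain and by where the loads sit) and to treat the boundary cases with a trivial branch, where the two terms coincide by the reflection fixing $v_3$.

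Finally, the sub-hypertrees of $\H_1$ lying entirely inside the $\T_1$-branch at $v_1$ have no $v_2$-counterpart and are left unmatched by this injection; since $\T_1$ is nontrivial, the smallest of them, one $\T_1$-edge together with one $\T_2$-edge, already contributes positively to $\tr_d$ at $d/m=2$, and this surplus is exactly what forces strictness for every $m\mid d$ with $d/m\ge 2$ (the cycle part only begins to contribute strictly at larger $d/m$). Adding the nonnegative cycle contribution, the trace-nondecreasing tree injection, and this positive surplus yields $\Tr_d(\H_1)\ge\Tr_d(\H_2)$ with the asserted strictness.
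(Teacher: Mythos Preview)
Your high-level strategy—expanding via Lemma~\ref{newex}, cancelling the sub-hypergraphs that avoid $\T_2$, factoring the rest as $A\odot B$, and then comparing by relocating $B$ from $v_2$ to $v_1$—is exactly the paper's strategy, and your treatment of the cycle-containing part (via Lemma~\ref{cycleper}) and of the surplus term forcing strictness at $d/m\ge 2$ is correct.

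The gap is in the tree-part injection. As you describe it, the map is not injective. A $v_2$-tree $A$ containing $e_1$ and $e_3$ (but not $e_2$), with some $\T_1$-piece $T'$ at $v_1$ and some $\hat\T_3$-piece $S'$ at $v_3$, is sent—after relocating $B$ across $e_1$—to the $v_1$-tree with edge set $\{e_1,e_3\}\cup T'\cup S'$ and $B$ at $v_1$. But a $v_2$-tree $A'$ containing $e_1$ and $e_2$ (not $e_3$), with the \emph{same} pieces $T'$ at $v_1$ and $S'$ at $v_3$, is—after ``replacing $e_2$ by $e_3$''—sent to a $v_1$-tree with exactly the same edge set $\{e_1,e_3\}\cup T'\cup S'$ and $B$ at $v_1$. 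The two images coincide. Your separating criterion ``which of $e_1,e_2,e_3$ they contain and where the loads sit'' cannot distinguish them: both images contain $e_1,e_3$, have $v_2$ pendant, and carry identical loads at $v_1$ and $v_3$. So the injection double-counts and the inequality does not follow from the sketch.

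The paper sidesteps this collision by a different decomposition. It first peels off the sub-hypergraphs containing an edge of $\T_2$ but \emph{no} edge of $\T_1$: these are matched by the reflection of $C_3^m$ fixing $v_3$ and swapping $v_1\leftrightarrow v_2$, which is an isomorphism once $\T_1$ is absent. Every remaining $\G$ on the $\H_2$-side then contains edges of both $\T_1$ and $\T_2$, hence must contain $e_1$; one uses the single bijection $\psi:\H'(v_2)\odot\T_2'\mapsto\H'(v_1)\odot\T_2'$ (same $\H'$) and splits only by the presence of $e_2,e_3$. The crucial case—exactly one of $e_2,e_3$ present—is \emph{not} handled by an injection at all, but by the symmetric-sum device: one pairs $\G[e_2]$ with $\G[e_3]$ and compares the two pairs $(\tilde\G[e_2],\tilde\G[e_3])$ versus $(\G[e_2],\G[e_3])$ simultaneously via Lemma~\ref{Onedge}(2), which is designed precisely for the four-term inequality $\tr_d(\H_{11})+\tr_d(\H_{12})\ge\tr_d(\H_{21})+\tr_d(\H_{22})$. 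Your argument can be repaired along these lines, but the injection as written does not go through.
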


\begin{proof}
Let $\H_1:=\H(v_1)\odot \T_2(w)$ and $\H_2:=\H(v_2)\odot \T_2(w)$.
We have
$$\CC(\H_i)=\CC(\H_i;\T_2^\times)
\cup\CC(\H_i;\T_2,\T_1^\times)\cup\CC(\H_i;\T_2,\T_1),i=1,2.$$
As all hypergraphs in $\CC(\H_i;\T_2,\T_1)$ are connected, we also have
$$ \CC(\H_1;\T_2,\T_1)=\CC(\H_1;\T_2,\T_1,e_1^m)\cup \CC(\H_1;\T_2,\T_1,(e_1^m)^\times),
\CC(\H_2;\T_2,\T_1)=\CC(\H_2;\T_2,\T_1,e_1^m).$$
Obviously, $\CC(\H_1;\T_2^\times)=\CC(\H_2;\T_2^\times)$, and there is a bijection $\phi$ from
$\CC(\H_1;\T_2,\T_1^\times)$ to $\CC(\H_2;\T_2,\T_1^\times)$
such that $G$ is isomorphic to $\phi(G)$ for each $G \in \CC(\H_1;\T_2,\T_1^\times)$.
By Theorem \ref{traU},
\begin{equation}\label{tradif4}\small
\Tr_d(\H_1)-\Tr_d(\H_2)=d(m-1)^N \left(\sum_{\G \in \CC(\H_1;\T_2,\T_1,e_1)}\tr_d(\G)-\sum_{\G \in \CC(\H_2;\T_2,\T_1,e_1)}\tr_d(\G)+\sum_{\G \in \CC(\H_1;\T_2,\T_1,e_1^\times)}\tr_d(\G)\right),
\end{equation}
where $N$ is the number of vertices of $\H_1$ or $\H_2$.
We will prove that
\begin{equation}\label{ineg4}
\sum_{\G \in \CC(\H_1;\T_2,\T_1,e_1)}\tr_d(\G)\ge \sum_{\G \in \CC(\H_2;\T_2,\T_1,e_1)}\tr_d(\G),
\end{equation}
with strict inequality if $d/m$ is an integer and $d/m \ge 3$.

By Theorem \ref{traU}, we may assume $m \mid d$; otherwise $\tr_d(\G)=0$.
Note that $\CC(\H_2;\T_2,\T_1,e_1^m)$ or $\CC(\H_1;\T_2,\T_1,e_1^m)$ is nonempty if and only if $d/m \ge 3$.
So we also assume that $d/m \ge 3$.
For each $\G \in \CC(\H_2;\T_2,\T_1,e_1^m)$, we can write $\G=\H'(v_2) \odot \T'(w)$, where $\H'$ is a sub-hypergraph of $\H$ which contains rooted sub-hypertree $\T'_1(v_1)$ of $\T_1(v_1)$ and the edge $e_1^m$, and $\T_2'(w)$ is a rooted sub-hypertree of $\T_2(w)$.
Then there is a bijection $\psi$ from $\CC(\H_2;\T_2,\T_1,e_1^m)$ to $\CC(\H_1;\T_2,\T_1,e_1^m)$ such that $\psi(\H'(v_2) \odot \T_2'(w))=\H'(v_1) \odot \T_2'(w)$.
Now we let $\G=\H'(v_2) \odot \T_2'(w)$ and $\tilde{\G}:=\psi(\G)=\H'(v_1) \odot \T_2'(w)$, and  divide the discussion into cases.

Case 1. $\G$ contains neither $e_2$ nor $e_3$.
Then $\tilde{\G},\G$ are respectively corresponding the hypergraphs $\H_1,\H_2$ in Lemma \ref{Onedge}(1), and hence
$\tr_d(\tilde{\G}) \ge \tr_d(\G).$
So we have
\begin{equation}\label{3cylce1}
\sum_{\tilde{\G} \in \CC(\H_1;\T_2,\T_1,e_1,e_2^\times,e_3^\times)}\tr_d(\tilde{\G})
\ge \sum_{\G \in \CC(\H_2;\T_2,\T_1,e_1^m,e_2^\times,e_3^\times)}\tr_d(\G),
\end{equation}
with strict inequality if $d/m \ge 3$ as in this case $\CC(\H_1;\T_2,\T_1,e_1^m,e_2^\times,e_3^\times)$ is nonempty.

Case 2. $\G$ contains exactly one edge of $e_2$ and $e_3$.
Let $\G[e_3] \in \CC(\H_2;\T_2,\T_1,e_1,e_2^\times, e_3)$ which contains $e_3$ but no $e_2$.
Let $\T'$ be the (edge maximal) sub-hypertree of $\G[e_3]$ which contains $e_3$ and contains no edges of $\T_1,\T_2$ or $e_1$.
Let $\G[e_2] \in \CC(\H_2;\T_2,\T_1,e_1,e_2,e_3^\times)$ which is obtained from $\G[e_2]$ by relocating the sub-hypertree $\T'$ attached at $v_1$ to $v_2$ and labeling the edge $e_3$ as $e_2$.
By the bijection $\psi$ defined before, we have
$\tilde{\G}[e_3] \in \CC(\H_1;\T_2,\T_1,e_1,e_2^\times, e_3)$ and
$\tilde{\G}[e_2] \in \CC(\H_1;\T_2,\T_1,e_1,e_2,e_3^\times)$.
Now $\tilde{\G}[e_3], \tilde{\G}[e_2],\G[e_3],\G[e_2]$ are corresponding $\H_{11},\H_{12},\H_{21},\H_{22}$ in Lemma \ref{Onedge}(2), and hence
$$ \tr_d(\tilde{\G}[e_3])+\tr_d(\tilde{\G}[e_2]) \ge \tr_d(\G[e_3])+\tr_d(\G[e_2]).$$
So we have
\begin{equation}\label{3cylce2}\small
\sum_{\tilde{\G} \in \CC(\H_1;\T_2,\T_1,e_1,e_2,e_3^\times)\cup
\CC(\H_1;\T_2,\T_1,e_1,e_2^\times, e_3)
}\tr_d(\tilde{\G})
\ge \sum_{\G \in \CC(\H_2;\T_2,\T_1,e_1,e_2,e_3^\times)\cup
\CC(\H_2;\T_2,\T_1,e_1,e_2^\times, e_3)}\tr_d(\G),
\end{equation}
with strict inequality if $d/m \ge 4$.

Case 3. $\G$ contains both $e_2$ and $e_3$.
Then $\G$ contains the cycle $C_3^m$.
Now $\tilde{\G},\G$ are respectively corresponding to $\H(v_1)\odot\T_2(w)$ and $\H(v_2)\odot\T_2(w)$ in Lemma \ref{cycleper}, and
$$ \tr_d(\tilde{\G}) \ge \tr_d(\G).$$
So we have
\begin{equation}\label{3cylce3}
\sum_{\tilde{\G} \in \CC(\H_1;\T_2,\T_1,e_1,e_2,e_3)
}\tr_d(\tilde{\G})
\ge \sum_{G \in \CC(\H_2;\T_2,\T_1,e_1,e_2,e_3)}\tr_d(G),
\end{equation}
with strict inequality if $d/m \ge 5$.

Combing Eqs. (\ref{3cylce1}), (\ref{3cylce2}) and (\ref{3cylce3}), we arrive at the inequality (\ref{ineg4}).
As $\CC(\H_1;\T_2,\T_1,e_1^\times)\ne \emptyset$ if $d/m \ge 2$, we get the desired result by Eq. (\ref{tradif4}).
\end{proof}

Denote $\mathcal{S}_{n,t}:=C_n(v) \odot S_t(w)$, where $v$ is a vertex of $C_n$ and $w$ is the center of $S_t$.
Then the $m$-th power of $\mathcal{S}_{n,t}$ is $\mathcal{S}^m_{n,t}:=C_n^m(v) \odot S_t^m(w)$, namely, a hypergraph obtained from $C_n^m$ by attaching $S_t^m$ with its center identified with a vertex of $C_n^m$ with degree $2$.

\begin{thm}
Let $\mathcal{U}$ be an $m$-uniform linear unicyclic hypergraph with $z$ edges and girth $3$.
Then
$$EE(\U)\leq EE(\mathcal{S}^m_{3,z-3}),$$
with equality if and only if $\U=\mathcal{S}^m_{3,z-3}$.
\end{thm}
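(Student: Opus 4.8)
The plan is to begin from the structural description of an extremal hypergraph given by Lemma~\ref{UatStar} and then apply the concentration inequality of Corollary~\ref{cycleTra} twice, using the symmetry of $C_3^m$, to push every attached hyperstar onto a single vertex of the triangle. Since $\mathscr{U}_{z,3}^m$ is finite up to isomorphism I would fix a hypergraph $\U$ of maximum Estrada index in the class; because $\mathcal{S}^m_{3,z-3}$ itself lies in $\mathscr{U}_{z,3}^m$, it suffices to prove $\U=\mathcal{S}^m_{3,z-3}$. If $z=3$ then $\U=C_3^m=\mathcal{S}^m_{3,0}$ and there is nothing to do, so assume $z>3$. By Lemma~\ref{UatStar} with $g=3$, the hypergraph $\U$ is obtained from $C_3^m$ by attaching hyperstars at the degree-$2$ vertices $v_1,v_2,v_3$ with centers identified with the $v_i$, so
$$\U=C_3^m(v_1,v_2,v_3)\odot\bigl(S_{t_1}^m(v_1),S_{t_2}^m(v_2),S_{t_3}^m(v_3)\bigr),\qquad t_1+t_2+t_3=z-3,$$
where $S_{t_i}^m$ is centered at $v_i$ and is trivial when $t_i=0$. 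As $z>3$, some $t_i>0$, and I relabel so that $t_1>0$.

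Next I would concentrate the stars onto $v_1$. Setting $\H=C_3^m(v_1,v_3)\odot\bigl(S_{t_1}^m(v_1),S_{t_3}^m(v_3)\bigr)$, the nontrivial $S_{t_1}^m$ plays the role of $\T_1(v_1)$ in Corollary~\ref{cycleTra}; taking $\T_2=S_{t_2}^m$ we have $\U=\H(v_2)\odot\T_2(w)$, so the corollary gives $\Tr_d(\H(v_1)\odot\T_2(w))\ge\Tr_d(\U)$ for all $d$, strictly when $m\mid d$ and $d/m\ge2$. Since two hyperstars with identified centers merge into one, $\H(v_1)\odot\T_2(w)=C_3^m(v_1,v_3)\odot\bigl(S_{t_1+t_2}^m(v_1),S_{t_3}^m(v_3)\bigr)$. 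Applying Corollary~\ref{cycleTra} a second time through the automorphism of $C_3^m$ fixing $v_1$ and swapping $v_2\leftrightarrow v_3$ (which relocates the hyperstar at $v_3$ to $v_1$) produces $C_3^m(v_1)\odot S_{t_1+t_2+t_3}^m(v_1)=\mathcal{S}^m_{3,z-3}$ and the chain $\Tr_d(\mathcal{S}^m_{3,z-3})\ge\Tr_d(\U)$ valid for every $d$.

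Finally I would pass to the Estrada index via $EE(\cdot)=\sum_{d\ge0}\Tr_d(\cdot)/d!$. The two relocation steps give $\Tr_d(\mathcal{S}^m_{3,z-3})\ge\Tr_d(\U)$ for all $d$, with strict inequality at $d=2m$ as soon as at least two of $t_1,t_2,t_3$ are positive (so that a nontrivial hyperstar is genuinely moved). Hence $EE(\U)\le EE(\mathcal{S}^m_{3,z-3})$; since $\U$ is extremal and $\mathcal{S}^m_{3,z-3}$ is a competitor, equality is forced, whence $\U=\mathcal{S}^m_{3,z-3}$. This establishes both the inequality for every member of $\mathscr{U}_{z,3}^m$ and the uniqueness of the maximizer. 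The step I expect to need the most care is the bookkeeping in the two invocations of Corollary~\ref{cycleTra}: the relocated hyperstar must always be matched against a \emph{nontrivial} hyperstar in the role of $\T_1(v_1)$, which is precisely why I single out a target vertex with $t_1>0$ at the outset and never try to consolidate onto an empty vertex.
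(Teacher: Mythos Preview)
Your proposal is correct and follows essentially the same route as the paper: reduce to the ``hyperstars at the degree-$2$ vertices'' shape via Lemma~\ref{UatStar}, then use Corollary~\ref{cycleTra} to push stars onto a single vertex. The only cosmetic difference is that the paper argues by contradiction with a \emph{single} relocation (if two of the $t_i$ are positive, one move already yields a strictly larger Estrada index, contradicting extremality), whereas you perform two relocations to reach $\mathcal{S}^m_{3,z-3}$ explicitly; both are valid and rest on the same two lemmas.
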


\begin{proof}
Let $\U_{0}$ be an $m$-uniform linear unicyclic hypergraph with $z$ edges and girth $3$ with maximum Estrada index among all hypergraphs in $\mathscr{U}_{z,3}^m$.
By Lemma \ref{UatStar},
  we can write $\U_{0}$ as
  $$\U_{0}=C_{3}^{m}(v_1,v_2,v_3)\odot (S_{z_{1}}^{m}(v_{1}),S_{z_{2}}^{m}(v_{2}),S_{z_{3}}^{m}(v_{3})),$$
where $v_{i}$ is the vertex of $C_3^m$ with degree $2$ and also the center of $S_{z_{i}}^{m}$ for $i=1, 2, 3$, and some of the hyperstars $S_{z_{i}}^{m}$ may be trivial.
If $\U_0$ is not isomorphic to $\mathcal{S}^m_{3,z-3}$, then there exists two nontrivial hyperstars among $S_{z_{i}}^{m}$ for $i=1,2,3$.
Without loss of generality, let $S_{z_{1}}^{m}(v_{1}),S_{z_{2}}^{m}(v_{2})$ be nontrivial.
Now let $\tilde{\U}_0$ be obtained from $\U_0$ by relocating $S_{z_{2}}^{m}$ from $v_2$ to $v_1$.
By Corollary \ref{cycleTra}, we have
$$\Tr_d(\U_0) \le \Tr_d(\tilde{\U}_0),$$
with strict inequality if $m \mid d$ and $d/m \ge 2$.
So
$$EE(\U_{0})<EE(\tilde{\U}_0),$$
which yields a contradiction to the definition of $\U_{0}$.
So $\U_{0}=\mathcal{S}^m_{3,z-3}$.
The result follows.
\end{proof}

Du and Zhou \cite{DZ} proved that
$S_{3,z-3}$ is the unique unicyclic graph with maximum Estrada index among all unicyclic graphs with $z$ edges.
We have the following problem:

\begin{prob}
Is $\mathcal{S}^m_{3,z-3}$ the unique unicyclic hypergraph with maximum Estrada index among all linear unicyclic hypergraphs with $z$ edges?
\end{prob}



\begin{thebibliography}{99}
\bibitem{EB1987}T. van Aardenne-Ehrenfest, N. G. de Bruijn, Circuits and trees in oriented linear graphs, In: I. Gessel, G. C.Rota (Eds), \emph{Classic Papers in Combinatorics}, Birkh\"auser Boston, Boston, 1987, pp. 149-163.

\bibitem{BL}S. Bai, L. Lu, A bound on the spectral radius of hypergraphs with $e$ edges. \emph{Linear Algebra Appl.}, 549(2018), 203-218.

\bibitem{CPZ1} K. C. Chang, K. Pearson, T. Zhang, Perron-Frobenius theorem for nonnegative tensors, \emph{Commu. Math. Sci.}, 6 (2008), 507-520.

\bibitem{CPZ2} K. C. Chang, K. Pearson, T. Zhang, On eigenvalue problems of real symmetric tensors, \emph{J. Math. Anal. Appl.}, 350 (2009), 416-422.

\bibitem{CBZ} L. Chen, C. Bu, J. Zhou, Spectral moments of hypertrees and their applications, \emph{Linear  Multilinear Algebra}, 2021, DOI: 10.1080/03081087.2021.1953431.

\bibitem{CC2021} G. Clark, J. Cooper, A Harary-Sachs theorem for hypergraphs, \emph{J. Combin. Theory Ser. B}, 149 (2021), 1-15.

\bibitem{CD2012} J. Cooper, A. Dutle, Spectra of uniform hypergraphs, \emph{Linear Algebra Appl.}, 436 (2012), 3268-3292.

\bibitem{DL2009}    K. Das, S. Lee, On the Estrada index conjecture, \emph{Linear Algebra Appl.}, 431 (2009), 1351-1359.


\bibitem{Deng2009} H. Deng, A proof of a conjectures on the Estrada index, \emph{MATCH Commun. Math. Comput. Chem.}, 62 (2009), 599-606.

\bibitem{DZ} Z.-B. Du, B. Zhou, The Estrada index of unicyclic graph, \emph{Linear Algebra Appl.}, 436 (2012), 3149-3159.

\bibitem{DDW} C. Duan, E. R. van Dam, L. Wang, Signless Laplacian Estrada index and Laplacian Estrada index of uniform hypergraphs, Available at arXiv: 2206.00109.

\bibitem{ER2005} E. Estrada, J. A. Rodr\'iguez-Val\'azquez, Subgraph centrality in complex networks, \emph{Phys. Rev. E}, 71 (056103) (2005), 1-9.

\bibitem{E2002} E. Estrada, Characterization of the folding degree of proteins, \emph{Bioinformatics}, 18 (2002), 697-704.

\bibitem{E2000} E. Estrada, Characterization of 3D molecular structure, \emph{Chem. Phys. Lett.}, 319 (2000), 713-718.

\bibitem{FBH} Y.-Z. Fan, Y.-H. Bao, T. Huang, Eigenvariety of nonnegative symmetric weakly irreducible tensors associated with spectral radius and its application to hypergraphs, \emph{Linear Algebra Appl.}, 564 (2019), 72-94.

\bibitem{FHB} Y.-Z. Fan, T. Huang, Y.-H. Bao, C. L. Zhuan-Sun, Y.-P. Li, The spectral symmetry of weakly irreducible nonnegative tensors and connected hypergraphs. \emph{Trans. Amer. Math. Soc.}, 372(3) (2019), 2213-2233.

\bibitem{FHBproc} Y.-Z. Fan, T. Huang, Y.-H. Bao, The dimension of eigenvariety of nonnegative tensors associated with spectral radius, \emph{Proc. Amer. Math. Soc.}, 150 (2022), 2287-2299.


\bibitem{FLW} Y.-Z. Fan, M. Li, Y. Wang, The cyclic index of adjacency tensor of generalized power hypergraphs, \emph{Discrete Math.}, 344 (2021), 112329.

\bibitem{FanTPL} Y.-Z. Fan, Y.-Y. Tan, X.-X. Peng, A.-H. Liu, Maximizing spectral radii of uniform hypergraphs with few edges, \emph{Discuss. Math. Graph Theory}, 36(4)(2016), 845-856.

\bibitem{FTL} Y.-Z. Fan, M.-Y. Tian, M. Li, Fan, The stabilizing index and cyclic index of the coalescence and Cartesian product of uniform hypergraphs, \emph{J. Combin. Theory Ser. A}, 185 (2022), 105537.

\bibitem{FYa}  Y.-Z. Fan, Y. Yang, C.-M. She, J. Zheng, Y.-M. Song, H.-X. Yang, The trace and Estrada index of uniform hypergraphs with cut vertices, Available at arXiv: 2205.15502.

\bibitem{FYh}  Y.-Z. Fan, H.-X. Yang, J. Zheng, High-ordered spectral characterization of unicyclic graphs, Available at arXiv: 2208.13204.

\bibitem{FGH} S. Friedland, S. Gaubert, L. Han, Perron-Frobenius theorem for nonnegative multilinear forms and extensions, \emph{Linear Algebra Appl.}, 438 (2013), 738-749.

\bibitem{GCH2022} G. Gao, A. Chang, Y. Hou, Spectral radius on linear $r$-graphs without expanded $K_{r+1}$, \emph{SIAM J. Discrete. Math.}, 36(2)(2022), 1000-1011.

\bibitem{Ha} R. Hartshorne, \emph{Algebraic Geometry}, Springer-Verlag, New York, 1977.

\bibitem{HQS} S. Hu, L. Qi, J. Shao, Cored hypergraphs, power hypergraphs and their Laplacian eigenvalues,
\emph{Linear Algebra Appl.}, 439(2013), 2980-2998.

\bibitem{KLM2014} P. Keevash, J. Lenz, D. Mubayi, Spectral extremal problems for hypergraphs, \emph{SIAM J. Discrete Math.}, 28(4)(2014), 1838-1854.



\bibitem{LSQ} H. Li, J. Shao, L. Qi, The extremal spectral radii of $k$-uniform supertrees, \emph{J. Comb. Optim.}, 32(2016), 741-764.

\bibitem{Lim}L.-H. Lim, Singular values and eigenvalues of tensors: a variational approach, in \emph{Computational Advances in Multi-Sensor Adapative Processing}, 2005 1st IEEE International Workshop, IEEE, Piscataway, NJ, 2005, pp. 129-132.

\bibitem{LKS2018} L. Liu, L. Kang, E. Shan, On the irregularity of uniform hypergraphs, \emph{European J. Combin.}, 71(2018), 22-32.

 \bibitem{LM}L. Lu, S. Man, Connected hypergraphs with small spectral radius. \emph{Linear Algebra Appl.}, 509 (2016), 206-227.

\bibitem{LXZ} H. Lu, N. Xue, Z. Zhu, On the signless Laplacian Estrada index of uniform hypergraphs, \emph{Int. J. Quantum. Chem.}, 121 (2021), e26579.

\bibitem{MS2011} A. Morozov, Sh. Shakirov, Analogue of the identity Log Det = Trace Log for resultants, \emph{J. Geom. Phys.}, 61(3) (2011), 708-726.


\bibitem{PGR2007}    J. A. Pe\~na, I. Gutman, J. Rada, Estimating the Estrada index, \emph{Linear Algebra Appl.}, 427 (2007), 70-76.

\bibitem{Qi} L. Qi, Eigenvalues of a real supersymmetric tensor, \emph{J. Symbolic Comput.}, 40 (2005), 1302-1324.

\bibitem{SQH2015}J.-Y. Shao, L. Qi, S. Hu, Some new trace formulas of tensors with applications in spectral hypergraph theory, \emph{Linear Multilinear Algebra}, 63 (2015), 971-992.

\bibitem{SZB2021} L. Sun, H. Zhou, C. Bu, Estrada index of hypergraphs via eigenvalues of tensors, Available at arXiv: 2107.03837.

\bibitem{TS1941} W. T. Tutte, C. A. B. Smith, On unicursal paths in a network of degree $4$, \emph{Amer. Math. Monthly}, 48(4) (1941), 233-237.

\bibitem{YY1}  Y. Yang, Q. Yang, Further results for Perron-Frobenius theorem for nonnegative tensors, \emph{SIAM J Matrix Anal. Appl.}, 31 (5) (2010), 2517-2530.

\bibitem{YY2} Y. Yang, Q. Yang, Further results for Perron-Frobenius theorem for nonnegative tensors II, \emph{SIAM J Matrix Anal. Appl.}, 32 (4) (2011), 1236-1250.

\bibitem{YY3} Y. Yang, Q. Yang, On some properties of nonnegative weakly irreducible tensors, Available at arXiv: 1111.0713v2.


\bibitem{ZKSB} W. Zhang, L. Kang, E. Shan, Y. Bai, The spectra of uniform hypertrees. \emph{Linear Algebra Appl.}, 533(2017), 84-94.

\bibitem{Zhou} Zhou, J., Sun, L., Wang, W., Bu, C.: Some spectral properties of uniform hypergraphs. \emph{Electron. J. Combin.}, 21 (2014), \#P4.24.

\end{thebibliography}
\end{document}